\newcommand{\Px}{ \mathbb{P} }
\newcommand{\Ex}{ \mathbb{E} }
\def\esssup_#1{\underset{#1}{\mathrm{ess\,sup\, }}}
\def\essinf_#1{\underset{#1}{\mathrm{ess\,inf\, }}}
\def\argmax_#1{\underset{#1}{\mathrm{arg\,max\, }}}
\def\argmin_#1{\underset{#1}{\mathrm{arg\,min\, }}}
\newcommand{\Fx}{\mathbb{F} }
\newcommand{\R}{\mathds{R}}
\newtheorem{theorem}{Theorem}[section]
\numberwithin{equation}{section}
\newtheorem{proposition}[theorem]{Proposition}
\newtheorem{remark}[theorem]{Remark}
\newtheorem{lemma}[theorem]{Lemma}
\newtheorem{corollary}[theorem]{Corollary}
\definecolor{Red}{rgb}{1.00, 0.00, 0.00}
\definecolor{DRed}{rgb}{0.5, 0.00, 0.00}
\definecolor{Blue}{rgb}{0.00, 0.00, 1.00}
\definecolor{Green}{rgb}{0.0, 0.4, 0.0}
\title{An Extended Merton Problem with Relaxed Benchmark Tracking}
\author{Lijun Bo \thanks{Email: lijunbo@ustc.edu.cn, School of Mathematics and Statistics, Xidian University, Xi'an, 710126, China.}
\and
Yijie Huang \thanks{Email: yijie.huang@polyu.edu.hk, Department of Applied Mathematics, The Hong Kong Polytechnic University, Kowloon, Hong Kong, China.}
\and
Xiang Yu \thanks{Email: xiang.yu@polyu.edu.hk, Department of Applied Mathematics, The Hong Kong Polytechnic University, Kowloon, Hong Kong, China.}
}
\date{\vspace{-0.8cm}}
\begin{document}
\maketitle

\begin{abstract}
This paper studies Merton's problem in an extended formulation by incorporating the benchmark tracking on the wealth process. We consider a tracking formulation where the fund manager aims to maximize the trade-off between the expected utility of consumption and the expected largest shortfall of the wealth with reference to the benchmark level. Equivalently, the problem can be interpreted as a mixed stochastic control problem if a fictitious capital injection singular control is allowed, subjecting to the dynamic constraint that the wealth process compensated by the costly capital injection outperforms the benchmark at all times. By considering an auxiliary state process, we formulate an equivalent stochastic control problem with state reflections at zero. For general utility functions and It\^o's diffusion benchmark process, we develop a convex duality theorem, new to the literature, to the auxiliary stochastic control problem with state reflections in which the dual process also exhibits reflections from above. For CRRA utility and geometric Brownian motion benchmark process, we further derive the optimal portfolio and consumption in feedback form using the new duality theorem, allowing us to discuss some interesting financial implications induced by the additional risk-taking from the capital injection and the goal of tracking.

\vspace{0.1in}

\noindent\textbf{Keywords}: Benchmark tracking, expected largest shortfall, duality theorem, reflected diffusion processes, consumption and portfolio choice, Neumann boundary condition.
\end{abstract}

\section{Introduction}

Since the pioneer studies of \cite{Merton69,Merton1971}, the optimal portfolio and consumption problem via utility maximization has attracted tremendous generalizations to address various new demands and challenges coming from more realistic market models, performance measures, trading constraints, and among others. In the present paper, we consider a new type of  Merton problem in the context of fund management when the fund manager is also concerned about the relative performance with respect to a stochastic benchmark. It has been well documented that, in order to enhance client's confidence or to attract more new clients, tracking or outperforming a benchmark has become common in practice in fund management. Some typical benchmark processes are S$\&$P500 Index, Goldman Sachs Commodity Index, Hang Seng Index. Other popular examples of benchmark processes in fund management may refer to inflation rates, exchange rates, liability or education cost indices, etc.

Portfolio management with benchmark tracking has been an important research topic in quantitative finance, which is often used to assess the performance of fund management and may directly affect the fund manager's performance-based incentives. Various tracking formulations have been considered in the literature. For example, in \cite{Browne9}, \cite{Browne99}, and \cite{Browne00}, several active portfolio management objectives are studied including: maximizing the probability that the agent's wealth achieves a performance goal relative to the benchmark before falling below it to a predetermined shortfall; minimizing the expected time to reach the performance goal; the mixture of these two objectives and some further extensions. Another conventional way to optimize the tracking error is to minimize the variance or downside variance relative to the index value or return, see \cite{Gaivoronski05}, \cite{YaoZZ06} and \cite{NLFC}, which leads to some standard linear-quadratic stochastic control problems. In \cite{Strub18}, another objective function is introduced to measure the similarity between the normalized historical trajectories of the tracking portfolio and the index where the rebalancing transaction costs can also be taken into account. Notably, all aforementioned studies only focus on the task of portfolio tracking, and it is not straightforward to accommodate the optimal consumption within their problem formulations as the methodology developed in previous studies may fail if one also needs to maximize the expected utility over consumption in the objective function, for instance, the stochastic control problem is no longer of linear-quadratic type.

Recently, to address the optimal tracking of the non-decreasing and absolutely continuous benchmark process, a new tracking formulation using the fictitious capital injection is studied in \cite{BoLiaoYu21}, in which the fund manager can fictitiously inject some capital into the fund account such that the total capital outperforms the targeted benchmark process at all times.  The capital injection, also called the bail-out strategy, has been extensively studied in the setting of the De Finetti's optimal dividend problems, see e.g. \cite{Lokka08}, \cite{Eisenberg09}, \cite{Eisenberg11} and \cite{FS19}, in which the beneficiary of the dividends may strategically inject capital in order to avoid bankruptcy of firms. In the present paper, we borrow the idea of capital injection and employ it to measure and control the size of difference between the wealth process and the benchmark level. That is, the goal of benchmark tracking is to enforce the wealth process compensated by the capital injection to satisfy the dynamic floor constraint above the benchmark process. Later, it is revealed in \cite{BHY24} that this tracking formulation is very suitable to incorporate the consumption planning, in which the fund manager seeks a high expected utility on consumption and also steers the wealth process as close as possible to the targeted benchmark. Some treatment of the HJB equation with two Neumann boundary conditions are carefully developed when the benchmark process involves a running maximum term. However, as a price of the monotone benchmark process, the quantitative properties of the optimal portfolio and consumption strategies cannot be concluded in \cite{BoLiaoYu21} and \cite{BHY24}, leaving the financial implications therein inadequate.

In the present paper, our goal is to refine the quantitative analysis of the trade-off between the optimal consumption via utility maximization and the minimization of the tracking error via the cost minimization of total capital injection (or equivalently the minimization of the expected largest shortfall risk). We adopt the same tracking formulation in \cite{BoLiaoYu21,BHY24} to examine the consumption planning. Thanks to the explicit characterization of the optimal singular control of capital injection, the total capital injection in fact records the largest shortfall when the wealth falls below the benchmark process. That is, even when the capital injection may not be feasible in the practical fund management, the quantity of total capital injection in \eqref{eq:w} can still be interpreted as the risk measure of the expected largest shortfall of the wealth process with respect to the benchmark level. Therefore, even without capital injection, our equivalent unconstrained problem \eqref{eq_orig_pb} is still well-defined when the wealth process is allowed to fall below the benchmark, which can be viewed as an extended Merton problem amended by the minimization of the benchmark-moderated shortfall risk, see Remark \ref{remA} for more details. By introducing the auxiliary reflected state process $X=(X_t)_{t\geq0}$ in \eqref{state-X}, we end up to solve the equivalent stochastic control problem \eqref{eq:u0} over the portfolio and consumption strategies. We stress that the methodology in the present paper differs substantially from the HJB equation approach in \cite{BoLiaoYu21,BHY24}. Indeed, the technical study on the existence of the classical solution to the HJB equation with Neumann boundary conditions and the proof of the verification theorem using some estimations of the optimal feedback controls play the core roles in \cite{BoLiaoYu21,BHY24}, which heavily rely on the specific choice of CRRA utility. In sharp contrast, we develop the convex duality theorem and the general verification arguments on optimal control in the present paper, which can powerfully cope with general utility functions with some modest growth conditions. Moreover, the new duality approach also allows us to work with a class of general Ito diffusion processes to model the stochastic benchmark. 

On the technical front, to handle the reflection of the primal state process in the model, we modify the standard dual process in \cite{Karatzas1987} and \cite{Cox1989}, namely the state price density process in the Black-Scholes model, by introducing the reflection of the dual process from above at a constant barrier; see its definition in \eqref{eq:Yt}. To further define the appropriate dual problem and close the duality gap, we first provide a new characterization of admissible portfolio and consumption controls using the dual process in a fashion of the budget constraint, however, involving the reflection local times from both primal and dual processes; see Lemma \ref{lem:budget-constraint}. Interestingly, we can verify that, under a  constructed candidate optimal strategy, (i) the budget constraint becomes an equality constraint, and (ii) the reflected primal state process attains its lower reflection boundary if and only if the reflected dual process hits its upper reflection boundary. Accordingly, we can define the dual value function in the form of \eqref{eq:dual-func}, which can be shown to satisfy a linear PDE \eqref{eq:HJB-dual-hatv} with the Neumann boundary condition. We then rigorously prove the convex duality theorem (see Theorem \ref{thm:duality}) between the value function $v(x,z)$ for the auxiliary control problem \eqref{eq:u0} and the dual value function \eqref{eq:dual-func} in the context with reflections by verifying the transversality condition and the optimality of the constructed optimal controls in terms of the dual process satisfying the equality budget constraint. Using the inverse transform, we can readily express the optimal portfolio-consumption strategy in terms of the primal value function (see Corollary~\ref{coro:optimal-control}).

Thanks to the newly established duality theorem, in the special case of CRRA utility and the GBM benchmark process, we can further characterize the optimal portfolio and consumption control in semi-analytical feedback form in terms of the primal state variables, see Corollary \ref{coro:optimal}. Consequently, we are capable to discuss and numerically illustrate some new and notable financial implications of the optimal portfolio and consumption strategies induced by the trade-off between the utility maximization and the goal of tracking. Firstly, we highlight that the feedback optimal portfolio and optimal consumption strategies exhibit convex (concave) property with respect to the wealth level when the fund manager is more (less) risk averse (see Proposition \ref{lem:prop}), which differs significantly from the classical Merton's solution. Some numerical examples and discussions on their financial implications are given in Section \ref{sec:num}. Secondly, due to the extra risk-taking from the capital injection, the optimal portfolio and consumption behavior will become more aggressive than their counterparts in the Merton problem, and it is interesting to see from our main result that the optimal portfolio and consumption are both positive at the instant when the capital is injected (see Remark \ref{rem0}). Even when there is no benchmark, the allowance of fictitious capital injection already enlarges the space of admissible controls as the no-bankruptcy constraint is relaxed and our credit line of wealth is captured by the total capital injection or the largest shortfall risk.  A detailed comparison result with the Merton solution are summarized in Remark \ref{compareM}, and a brief description of financial implications on the adjustment impact by the capital injection are provided in Section \ref{sec:num} through numerical examples. Thirdly, it is revealed that our extended formulation of Merton's problem by minimizing the proposed tracking error (the cost of expected total capital injection or the expected largest shortfall) naturally induces the lowest subsistence level on the optimal consumption as shown in Figure \ref{fig:simulation}. Notably, the subsistent consumption constraint has been extensively studied in the literature to partially explain various empirical observations, which often requires the minimum level of initial wealth to guarantee the well-posedness of the problem. Interestingly, as the minimum consumption constraint is not imposed in our admissible control space, our problem formulation is flexible with all wealth levels and the subsistent consumption level results from the reflection of the auxiliary state process. Therefore, our solution naturally indicates that the optimal consumption will stay above a positive constant level, which echos with some previous financial insights by imposing the subsistent consumption constraint in the literature. Fourthly, we observe that the optimal control in our formulation is not necessarily monotone in the risk aversion parameter, which is consistent with some existing empirical studies, see Figure \ref{fig:optimal-p} and simulations in Figure \ref{fig:optimal-x-1}. We thereby provide a different perspective to explain some observed diverse dependence on the risk aversion parameter, see some detailed discussions of financial insights therein. Finally, to verify that our problem formulation is well defined and financially sound, it is shown in Lemma \ref{lem:inject-captial} and Remark \ref{remA} that the expected discounted total capital injection is both bounded below and above. The lower bound indicates the necessity of capital injection to meet the dynamic benchmark floor constraint, and the upper bound implies the finite risk measured by the expected largest shortfall when wealth process falls below the benchmark.

The remainder of this paper is organized as follows. In Section \ref{sec:model}, we introduce the market model and the relaxed benchmark tracking formulation using the fictitious capital injection. In Section \ref{sec:aux}, by introducing an auxiliary state process with reflection, we formulate an equivalent stochastic control problem and establish the convex duality theorem for general utility functions and Ito benchmark processes by introducing the dual reflected process and the dual value function. Under the CRRA utility and GBM benchmark process,  the verification theorem on optimal feedback control in semi-analytical form is presented in Section \ref{sec:CRRA} together with some quantitative properties of the optimal controls. Some numerical examples and financial implications are presented in Section \ref{sec:num}. Section~\ref{sec:proof} contains proofs of all results in previous sections.

\section{Market Model and Problem Formulation}\label{sec:model}

Consider a financial market model consisting of $d$ risky assets under a filtered probability space $(\Omega, \mathcal{F}, \Fx,\mathbb{P})$ with the filtration $\mathbb{F}=(\mathcal{F}_t)_{t\geq 0}$ satisfying the usual conditions. We introduce the following Black-Scholes model where the price process vector $S=(S^1,\ldots,S^d)^{\top}=(S_t^1,\ldots,S_t^d)_{t\geq0}^{\top}$ of $d$ risky assets is described by
\begin{align}\label{stockSDE}
dS_t = {\rm diag}(S_t)(\mu dt+\sigma dW_t),\quad S_0\in(0,\infty)^d,\ \ t\geq 0.
\end{align}
Here, ${\rm diag}(S_t)$ denotes the diagonal matrix with the diagonal elements given by $S_t$, and $W=(W^1,\ldots,W^d)^{\top}=(W_t^1,\ldots,W_t^d)_{t\geq 0}^{\top}$ is a $d$-dimensional $\Fx$-adapted Brownian motion, and $\mu=(\mu_1,\ldots,\mu_d)^{\top}\in\R^d$ denotes the vector of return rate and $\sigma=(\sigma_{ij})_{d\times d}$ is the volatility matrix which is invertible. It is assumed that the riskless interest rate $r=0$, which amounts to the change of num\'{e}raire. From this point onwards, all processes including the wealth process and the benchmark process are defined after the change of num\'{e}raire.

At time $t\geq 0$, let $\theta_t^i$ be the amount of wealth that the fund manager allocates in asset $S^i=(S_t^i)_{t\geq 0}$, and let $c_t$ be the non-negative consumption rate. The self-financing wealth process under the control $\theta=(\theta_t^1,\ldots,\theta_t^d)_{t\geq 0}^{\top}$ and the control $c=(c_t)_{t\geq 0}$ is given by
\begin{align}\label{eq:wealth2}
dV^{\theta,c}_t &=\theta_t^{\top}\mu dt+ \theta_t^{\top}\sigma dW_t-c_tdt,\quad t\geq 0,
\end{align}
where $V_0^{\theta,c}=\textrm{v}\geq 0$ denotes the initial wealth of the fund manager.

In the present paper, it is considered that the fund manager has concerns on the relative performance with respect to an external benchmark process.  The benchmark process $Z=\left(Z_t\right)_{t \geq 0}$ is described by the It\^o's diffusion process:
\begin{align}\label{eq:Zt}
d Z_t=\mu_Z (Z_t) d t+\sigma_Z (Z_t) d W_t^{\gamma}, \quad Z_0=z\in\R,
\end{align}
where the Brownian motion $W_t^{\gamma}:=\gamma^{\top}W_t$ for $t\geq0$ and $\gamma=(\gamma_1,\ldots,\gamma_d)^{\top}\in\R^d$ satisfying $|\gamma|=1$, i.e., the Brownian motion $W^{\gamma}=(W_t^{\gamma})_{t\geq0}$ is a linear combination of $W$ with weights $\gamma$. We impose the following conditions on coefficients $\mu_Z(\cdot)$ and $\sigma_Z(\cdot)$ that
\begin{itemize}
\item[$(\bm{A_{Z}})$] the coefficients $\mu_Z:\R\to \R$ and $\sigma_Z:\R\to\R$ belong to $C^2(\R)$ with bounded derivatives of various orders and satisfy  $\mu_Z(z)-\sigma_Z(z)\gamma^{\top}\sigma^{-1}\mu\geq 0$ for all $z\in\R$.
\end{itemize}
Note that both the Ornstein–Uhlenbeck process and GBM  satisfy the above assumption.

We consider the relaxed benchmark tracking formulation based on the expected largest shortfall by allowing the wealth process $V_t^{\theta,c}$ to fall below the benchmark $Z_t$. Precisely, given the benchmark process $Z=(Z_t)_{t\geq0}$, denote by $A^{\theta,c}_t$ the largest shortfall (up to time $t$) when the wealth falls below the benchmark, that is  $A^{\theta,c}_t =\sup_{s\leq t}(V_{s}^{\theta,c}-Z_s)^{-}$ for $t\geq 0$. Here for $x\in\R$, $x^-:=\max\{-x,0\}$. Then,  $\Ex[\int_0^{\infty} e^{-\rho t}d\sup_{s\leq t}(V_{s}^{\theta,c}-Z_s)^{-}]$ defines a risk measure on the expected largest shortfall with reference to the benchmark (see also the conventional definition of expected shortfall with respect to a random variable at the terminal time in \cite{Pham02} and the intra-horizon expected shortfall considered in \cite{Farkas21} and references therein). The fund manager needs to strategically choose the portfolio-consumption control to optimize the tradeoff between the expected utility of consumption and the expected largest shortfall.  As an extended Merton's  problem, the agent solves the following stochastic control problem, for all $(\mathrm{v},z)\in\R_+\times \R$ with $\R_+:=[0,\infty)$,
\begin{align}\label{eq_orig_pb}
{\rm w}(\mathrm{v}, z) &=\sup_{(\theta,c)\in\mathbb{U}}\ \Ex\left[ \int_0^{\infty} e^{-\rho t} U(c_t)dt-\beta\int_0^{\infty} e^{-\rho t} dA_t^{\theta,c}\right],
\end{align}
where the admissible control set $\mathbb{U}$ is defined as:
\begin{align*}
&\mathbb{U}:=\big\{\text{$(\theta,c)=(\theta_t,c_t)_{t\geq0}$; $(\theta,c)$ is an $\Fx$-adapted process taking values on $\R^d\times\R_+$}\big\}.
\end{align*}
The constant $\rho>0$ is the subjective discount rate, and the parameter $\beta>0$ describes the relative importance between the consumption performance and the expected largest shortfall.  Here, the general utility function $U(\cdot):\R_+\to \R$ is strictly increasing, strictly concave and second-order continuously differentiable. Moreover, $U'(\cdot)$ is positive with $U'(\infty):=\lim_{x\to\infty}U'(x)=0$.

Note that we can also equivalently interpret the problem \eqref{eq_orig_pb} as a mixed stochastic control problem with a fictitious strategic capital injection if it is allowed. Similar to \cite{BoLiaoYu21}, we may assume that the fund manager can strategically choose the dynamic portfolio and consumption as well as the fictitious capital injection such that the total capital outperforms the benchmark process at all times. That is, the fund manager optimally chooses the regular control $\theta=(\theta_t)_{t\geq0}$ as the dynamic portfolio in risky assets, the regular control $c=(c_t)_{t\geq0}$ as the consumption rate and the singular control $A=(A_t)_{t\geq0}$ as the cumulative capital injection such that $A_t+V_t^{\theta,c}\geq Z_t$ at any time $t\geq0$. In this case, the agent solves the following problem, for all $(\mathrm{v},z)\in\R_+\times \R$,
\begin{align}\label{eq:w}
\begin{cases}
\displaystyle \tilde{{\rm w}}(\mathrm{v},z):=\sup_{(\theta,c,A)\in\mathbb{U}} \Ex\left[\int_0^{\infty} e^{-\rho t} U(c_t)dt- \beta\left(A_0+\int_0^{\infty} e^{-\rho t}dA_t\right) \right],\\[1.4em]
\displaystyle \text{subject to}~Z_t \le A_t + V^{\theta,c}_t~\text{for all}~ t\geq 0,
\end{cases}
\end{align}
where the admissible control set $\tilde{\mathbb{U}}$ is defined as:
\begin{align*}
&\tilde{\mathbb{U}}:=\big\{\text{$(\theta,c,A)=(\theta_t,c_t,A_t)_{t\geq0}$; $(\theta,c)$ is an $\Fx$-adapted process taking values on $\R^d\times\R_+$, $A$ is a}\\
&\qquad\quad\text{nonnegative, non-decreasing process with r.c.l.l. paths and initial  value $A_0=a\geq 0$}\big\}.
\end{align*}
It follows from Lemma 2.4 in \cite{BoLiaoYu21} that,  for fixed regular control $(\theta,c)$, the optimal singular control $A^{(\theta,c),*}=(A^{(\theta,c),*}_t)_{t\geq0}$ satisfies $A^{\theta,c}_t =0\vee \sup_{s\leq t}(Z_{s}-V_{s}^{\theta,c})=\sup_{s\leq t}(V_{s}^{\theta,c}-Z_s)^{-},~\forall t\geq 0$.
Thus, problem \eqref{eq:w} admits the equivalent formulation as an unconstrained control problem with a running maximum cost that
\begin{align}\label{eq_orig_pb-w}
\tilde{{\rm w}}(\mathrm{v}, z) &=-\beta(z-\mathrm{v})^+\nonumber\\
&\quad+\sup_{(\theta,c)\in\mathbb{U}}\ \Ex\left[ \int_0^{\infty} e^{-\rho t} U(c_t)dt-\beta\int_0^{\infty} e^{-\rho t} d\left(0\vee \sup_{s\leq t}(Z_{s}-V_{s}^{\theta,c})\right)\right],\nonumber\\
&=-\beta(z-\mathrm{v})^++{\rm w}(\mathrm{v}, z),
\end{align}
which shows the equivalence between problems \eqref{eq_orig_pb} and \eqref{eq:w}. Here for $x\in\R$, $x^+:=\max\{x,0\}$.

We stress that when $\mu_Z(\cdot)=\sigma_Z(\cdot)\equiv 0$ and $z=0$ (constant zero benchmark) and the relative importance parameter $\beta\to+\infty$, the problem \eqref{eq:w} degenerates to the classical Merton's problem in \cite{Merton1971} as the capital injection is prohibited. However, when the benchmark process keeps constant zero that $Z_t\equiv 0$, but the capital injection is allowed with finite parameter $\beta\in(0,\infty)$, our problem formulation in \eqref{eq:w} actually motivates the fund manager to strategically inject capital from time to time to achieve the more aggressive portfolio and consumption behavior. That is, for the optimal solution $(\theta^*,c^*)$ in the Merton's problem, the control triplet $(\theta^*_t, c^*_t, A_t\equiv 0)$ without capital injection does not attain the optimality in \eqref{eq:w} even when $Z_t\equiv 0$.  Hence, the allowance of capital injection significantly affects the optimal decision making. These interesting observations are rigorously verified later in items (i) and (ii) in Remark \ref{compareM}.

Stochastic control problems with minimum floor constraints have been studied in different contexts, see among \cite{Karoui05}, \cite{KM06}, \cite{Giacinto11}, \cite{Sekine12}, and \cite{CYZ20} and references therein. In previous studies, the minimum guaranteed level is usually chosen as constant or deterministic level and some typical techniques to handle the floor constraints are to introduce the option based portfolio or the insured portfolio allocation. 
When there exists a stochastic benchmark $Z_t$, it is actually observed in this paper that one can not find any admissible control $(\theta_t, c_t, A_t\equiv 0)$ such that the constraint $Z_t\leq V_t^{\theta, c}$ is satisfied, i.e., the classical Merton's problem under the benchmark constraint $Z_t\leq V_t^{\theta, c}$ is actually not well defined. To dynamically outperform the stochastic benchmark, the capital injection is crucially needed and our problem formulation in \eqref{eq:w} is a reasonable and tractable one. The detailed elaboration of this observation is given in item (iii) of Remark \ref{compareM}.

We also note that some previous studies on stochastic control problems with the running maximum cost can be found in \cite{BaIshii89}, \cite{BDR1994}, \cite{BPZ15}, \cite{Weerasinghe16} and \cite{Kroner18}, where the viscosity solution approach usually plays the key role. We will adopt the equivalent stochastic control problem with a reflected state process as proposed in \cite{BoLiaoYu21} and rigorously develop a non-standard convex duality theorem, new to the literature, in the context of state reflections.


\section{Equivalent Control Problem and Duality Theorem}\label{sec:aux}

In this section, we formulate and study a more tractable equivalent stochastic control problem, which is mathematically equivalent to the unconstrained optimal control problem \eqref{eq_orig_pb}.

To formulate the equivalent stochastic control problem, we will introduce a new controlled state process to replace the process $V^{\theta,c}=(V_t^{\theta,c})_{t\geq 0}$ given in \eqref{eq:wealth2}. To this end, let us first define the difference process by
$D_t:=Z_t-V_t^{\theta,c}+\mathrm{v}-z,~\forall t\geq 0$
with $D_0=0$.  For any $x\in\R_+$, introduce the running maximum process of $D=(D_t)_{t\geq0}$ given by $L_t:=x\vee \sup_{s\leq t}D_s-x\geq 0$ for $t\geq0$, and $L_0=0$. We then propose a new controlled state process $X=(X_t)_{t\geq 0}$ taking values on $\R_+$, which is defined as the reflected process $X_t:=x+L_t-D_t$ for $t\geq 0$ that satisfies the following SDE with reflection:
\begin{align}\label{state-X}
X_t =x+\int_0^t\theta_s^{\top}\mu ds+\int_0^t\theta_s^{\top}\sigma dW_s  -\int_0^t c_s ds-\int_0^t \mu_Z (Z_s)ds-\int_0^t \sigma_Z (Z_s)dW_s^{\gamma}+ L_t
\end{align}
with the initial value $X_0=x\in\R_+$. For the notational convenience, we have omitted the dependence of $X=(X_t)_{t\geq0}$ on the control $(\theta,c)$. In particular, the process $L=(L_t)_{t\geq0}$ is referred to as the local time of the state process $X$, which increases at time $t$ if and only if $X_t=0$, i.e., $x+L_t=D_t$. We will change the notation from $L_t$ to $L_t^X$ from this point onwards to emphasize its dependence on the new state process $X$ given in \eqref{state-X}.

With the above preparations, let us consider the following stochastic control problem given by, for $(x,z)\in\R_+\times \R$,
\begin{align}\label{eq:u0}
\begin{cases}
\displaystyle v(x,z):=\sup_{(\theta,c)\in\mathbb{U}^{\rm r}}J(x,z;\theta,c)\\[0.6em]
\displaystyle\qquad\quad:=\sup_{(\theta,c)\in\mathbb{U}^{\rm r}}\Ex\left[\int_0^\infty e^{-\rho t} U(c_t)dt- \beta \int_0^{\infty} e^{-\rho t}dL_t^X\Big|X_0=x,Z_0=z \right],\\[1em]
\displaystyle \text{subject~to~the state process}~(X,Z)~\text{satisfies the dynamics~\eqref{state-X}~and~\eqref{eq:Zt}}.
\end{cases}
\end{align}
Here, the admissible control set $\mathbb{U}^{\rm r}$ is specified as the set of $\Fx$-adapted control processes $(\theta,c)=(\theta_t,c_t)_{t\geq0}$ such that the reflected SDE \eqref{state-X} has a unique strong solution.  It is not difficult to observe the equivalence between \eqref{eq_orig_pb} and \eqref{eq:u0} in the following sense:
\begin{lemma}\label{lem:equivalence}
For value functions $\mathrm{w}$ defined in \eqref{eq_orig_pb} and  $v$ defined in \eqref{eq:u0}, we have ${\rm w}(\mathrm{v},z)=v((\mathrm{v}-z)^+,z)$ for all $(\mathrm{v},z)\in\R_+\times \R$. Moreover, the value function $x\to v(x,z)$ defined in \eqref{eq:u0} is non-decreasing and $\left|v(x_1,z)-v(x_2,z)\right|\leq \beta |x_1-x_2|$ for all $(x_1,x_2,z)\in\R_+^2\times \R$.
\end{lemma}

Next, we will solve the stochastic control problem \eqref{eq:u0} by developing a convex duality approach. Assume that the value function $v(x,z)$ is smooth enough, it follows from Lemma \ref{lem:equivalence} that $0\leq v_x(x,z)\leq \beta$ for all $(x,z)\in\R_+\times \R$. Based on this observation and the fact of the state reflection of $X=(X_t)_{t\geq0}$ at boundary $0$, we introduce the following reflected dual process $Y=(Y_t)_{t\geq 0}$ taking values on $(0,\beta]$ in the form:
\begin{align}\label{eq:Yt}
dY_t=\rho Y_tdt-\mu^{\top}\sigma^{-1}Y_tdW_t-dL_t^Y,~~t>0,\quad Y_0=y\in(0,\beta],
\end{align}
where the process $L^Y=(L_t^Y)_{t\geq0}$ is a continuous and non-decreasing process (with $L_t^{Y}=0$) which increases on the time set $\{t\geq 0;~Y_t=\beta\}$ only. That is, we adopt and modify the state price density process in the Black-Scholes model by adding the upper reflection of $Y$ at boundary $\beta$.

Evidently, as both primal state process $X$ and the dual process $Y$ exhibit reflections, the conventional budget constraint for admissible consumption no longer holds. We need to carefully revise the characterization of admissible controls using the above reflected dual process, which is stated in the next important result. 

\begin{lemma}[Characterization of admissible portfolio and consumption]\label{lem:budget-constraint}
For any strategy pair $(\theta,c)\in\mathbb{U}^r$, consider the corresponding controlled state process $(X,Z)=(X_t,Z_t)_{t\geq 0}$ given by \eqref{state-X} and \eqref{eq:Zt} with initial value $(X_0,Z_0)=(x,z)\in\R_+\times \R$, and the reflected dual process $Y=(Y_t)_{t\geq 0}$ given by \eqref{eq:Yt} with $Y_0=y\in(0,\beta]$. Then, we have
\begin{align}\label{eq:budget}
\Ex\left[\int_0^{\infty}e^{-\rho t}\left(c_tY_t+F(Z_t)Y_t\right)dt+\int_0^{\infty}e^{-\rho t}X_tdL_t^Y-\int_0^{\infty}e^{-\rho t}Y_tdL_t^X\right]\leq xy,
\end{align}
where the function $F:\R\to \R$ is defined by $F(z):=\mu_Z(z)-\sigma_Z(z)\gamma^{\top}\sigma^{-1}\mu\geq 0$ for $z\in\R$.
\end{lemma}

\begin{remark}\label{rem:budget}
Clearly, compared to the conventional budget constraint for admissible consumption in Merton's problem, our characterization in \eqref{eq:budget} introduces additional complexity. Specifically, it not only incorporates the term arising from the benchmark process $Z = (Z_t)_{t \geq 0}$ but also includes reflection terms associated with both the primal process $X=(X_t)_{t\geq0}$ and the dual process $Y=(Y_t)_{t\geq0}$. Only when the benchmark process $Z \equiv 0$ (constant zero benchmark) and the relative importance parameter $\beta \to +\infty$ (capital injection is not allowed), the local time process $L_t^Y =0$ for all $t\geq 0$ (no reflection), and hence the inequality \eqref{eq:budget} reduces to the classical budget constraint as in \cite{Karatzas1987}:
\begin{align*}
\mathbb{E}\left[\int_0^{\infty} e^{-\rho t} c_t Y_t dt\right] \leq xy.
\end{align*}
Despite the apparent complexity of the inequality constraint in \eqref{eq:budget}, we conjecture that the optimal portfolio/consumption controls $(\theta^*,c^*)$ can be expressed in terms of the dual process $Y=(Y_t)_{t\geq0}$. Furthermore, under the optimal control, the optimal primal process $X^*=(X^*_t)_{t\geq0}$ hits the boundary $0$ if and only if the dual process $Y=(Y_t)_{t\geq0}$ hits the boundary $\beta$, and the inequality constraint tightens to the equality constraint:
\begin{align}\label{eq:budgetopt}
\Ex\left[\int_0^{\infty}e^{-\rho t}\left(c^*_tY_t+F(Z_t)Y_t\right)dt-\beta \int_0^{\infty}e^{-\rho t}dL_t^{X^{*}}\right]=xy.
\end{align}
Thus, we have, for any $(x,z)\in\R_+^2$,
\begin{align}
v(x,z)&=\Ex\left[\int_0^\infty e^{-\rho t} U(c_t^*)dt- \beta \int_0^{\infty} e^{-\rho t}dL_t^{X^*} \right]+xy-xy\nonumber\\
&=\Ex\left[\int_0^\infty e^{-\rho t} U(c_t^*)dt- \beta \int_0^{\infty} e^{-\rho t}dL_t^{X^*} \right]+xy\nonumber\\
&\quad-\Ex\left[\int_0^{\infty}e^{-\rho t}\left(c^*_tY_t+F(Z_t)Y_t\right)dt-\beta \int_0^{\infty}e^{-\rho t}dL_t^{X^{*}}\right]\nonumber\\
&=\Ex\left[\int_0^\infty e^{-\rho t} \big((U(c_t^*)dt- c^*_tY_t)-F(Z_t)Y_t\big)dt\right]+xy,
\end{align}
which motivates us to consider the following dual problem.
\end{remark}

Introduce the dual problem and define the corresponding dual value function $\hat{v}(y,z):(0,\beta]\times\R\to\R$ by
\begin{align}\label{eq:dual-func}
\hat{v}(y,z):=\Ex\left[\int_0^\infty e^{-\rho t} \left(\Phi(Y_t)-F(Z_t)Y_t\right)dt\right],
\end{align}
where  the function $\Phi(\cdot)$ is the Legendre-Fenchel (LF) transform of the utility function $U(\cdot)$, i.e., 
\begin{align}\label{eq:dualtyPhiU}
\Phi(x):=\sup_{c\in\R_+}\left(U(c)-cx\right),\quad \forall x>0.
\end{align}
To close the duality gap between the value function $v(x,z)$ and the dual function $\hat{v}(y,z)$, we impose the following assumptions on the general utility function $U(\cdot)$ and model parameters:
\begin{itemize}
\item[$(\bm{A_{o}})$] The utility function $U(\cdot)\in C^3(\R_+)$ is strictly increasing and strictly concave,  satisfying $U'(\cdot)>0$ and  $U'(\infty)=\lim_{x\to\infty}U'(x)=0$. Moreover, there exists a constant $p<1$ such that
\begin{align*}
\left|\Phi(x)\right|+\left|x\Phi'(x)\right|+\left|x^2\Phi''(x)\right|+\left|x^3\Phi'''(x)\right|\leq C\left(1+x^{\frac{p}{p-1}}\right),\quad \forall x\in(0,\beta],
\end{align*} 
where $C>0$ is a constant independent of $x$.

\item[$(\bm{A_{\rho}})$]The discount factor $\rho>\rho_0$, where $\rho_0>0$ is a constant depending on $p<1$ in $(\bm{A_{o}})$ defined by
\begin{align}\label{boundrho0}
    \rho_0:=
    \begin{cases}
   \max\{\sup_{z\in\R}\mu_Z'(z),\alpha p/(1-p)\},&\text{if}~p\in (0,1),\\[0.5em]
  \max\{\sup_{z\in\R}\mu_Z'(z),0\},&\text{if}~p\leq 0
    \end{cases}
    \end{align}
    with $\alpha:=\mu^{\top}(\sigma\sigma^{\top})^{-1}\mu/2$.
\end{itemize}
 The classical power utility function $U(x)=\frac{1}{p}x^p$ for $x>0$ and $p<1$ (when $p=0$, it is understood that $U(x)=\ln x$ for $x>0$) satisfies the above assumption $(\bm{A_{o}})$.

The following convex duality theorem is the main result of this paper, which shows the strong dual relationship between the value function $v(x,z)$ and the dual function $\hat{v}(y,z)$, and also provides the optimal control for the problem \eqref{eq:u0} in terms of the dual process.

\begin{theorem}[Duality Theorem]\label{thm:duality}
Let Assumptions $(\bm{A_{Z}})$, $(\bm{A_{o}})$ and $(\bm{A_{\rho}})$ hold. Then, for $(x,z)\in\R_+\times \R$,
\begin{align}\label{eq:duality-v}
v(x,z)=\inf_{y\in(0,\beta]}\left(\hat{v}(y,z)+xy\right).
\end{align}
Furthermore, let us introduce the control pair $(\theta^*,c^*)=(\theta^*_t,c^*_t)_{t\geq 0}$ given by
\begin{align}\label{eq:optimal-control-Yt}
\begin{cases}
\displaystyle \theta^*_t=(\sigma\sigma^{\top})^{-1}\left(\mu Y_t\hat{v}_{yy}(Y_t,Z_t)-\sigma_Z(Z_t)\sigma\gamma \hat{v}_{yz}(Y_t,Z_t)+\sigma_Z(Z_t)\sigma\gamma \right),\\[0.6em]
\displaystyle c^*_t=I(Y_t),\quad \forall t\geq 0.
\end{cases}
\end{align}
Here, $I(\cdot)$ is the inverse function of $U'(\cdot)$, the process $Y=(Y_t)_{t\geq0}$ is given by \eqref{eq:Yt} with $Y_0=y^*(x,z)$, and $y^*(x,z)$ is the function determined by $ -\hat{v}_y(y^*(x,z),z)=x$. Then, $(\theta^*,c^*)=(\theta_t^*,c_t^*)_{t\geq0}\in \mathbb{U}^r$  is an optimal control pair for problem \eqref{eq:u0}, that is
\begin{align}\label{eq:optimality-control}
v(x,z)=\sup_{(\theta,c)\in\mathbb{U}^{\rm r}}J(x,z;\theta,c)=J(x,z;\theta^*,c^*),\quad \forall (x,z)\in\R_+\times \R.
\end{align}
\end{theorem}

Using the inverse transform, the following corollary characterizes the optimal strategy in feedback form via the primal value function $v(x,z)$:
\begin{corollary}\label{coro:optimal-control}
Let Assumptions $(\bm{A_{Z}})$, $(\bm{A_{o}})$ and $(\bm{A_{\rho}})$ hold. Define the following optimal feedback control functions by, for any $(x,z)\in\R_+\times \R$,
\begin{align}\label{eq:feedbackfcn}
\begin{cases}
\displaystyle \theta^*(x,z):=-(\sigma\sigma^{\top})^{-1}\frac{\mu v_x(x,z)+\sigma_Z(z)\sigma\gamma (v_{xz}(x,z)-v_{xx}(x,z))}{v_{xx}(x,z)},\\[0.6em]
\displaystyle c^*(x,z):=I( v_{x}(x,z)).
\end{cases}
\end{align}
Consider the controlled state process $(X^*,Z)=(X_t^*,Z_t)_{t\geq 0}$ with feedback controls $\theta^*=(\theta^*(X^*_{t},Z_{t}))_{t\geq 0}$ and  $c^*=(\theta^*(X^*_{t},Z_{t}))_{t\geq 0}$. 
Then, the pair $(\theta^*,c^*)=(\theta_t^*,c_t^*)_{t\geq 0}\in\mathbb{U}^{\rm r}$ is an optimal control that, for all admissible  $(\theta,c)\in\mathbb{U}^{\rm r}$, we have
\begin{align*}
\Ex\left[\int_0^{\infty} e^{-\rho t} U(c_t)dt- \beta \int_0^{\infty} e^{-\rho t}dL_t^X \right]\leq v(x,z),\quad \forall (x,z)\in\R_+\times \R,
\end{align*}
where the equality holds when $(\theta,c)=(\theta^*,c^*)$.
\end{corollary}

The following lemma shows that the  expected largest shortfall (expectation of the total capital injection) is always finite and positive.
\begin{lemma}\label{lem:inject-captial}
Let Assumptions $(\bm{A_{Z}})$, $(\bm{A_{o}})$ and $(\bm{A_{\rho}})$  hold. Assume that the set ${\cal O}_Z=\{z\in\R;~F(z):=\mu_Z(z)-\sigma_Z(z)\gamma^{\top}\sigma^{-1}\mu> 0\}\neq\varnothing$. Then, we have
\begin{itemize}
\item[{\rm(i)}] The  expected largest shortfall (expectation of the total capital injection) under the optimal strategy $(\theta^*,c^*)$ provided in Corollary \ref{coro:optimal-control} is finite that there exists some function $P(x,z):\R_+\times \R\to \R_+$ such that
\begin{align}\label{eq:dAstarfinite}
\Ex\left[\int_0^{\infty} e^{-\rho t}dA^{\theta^*,c^*}_t \right]\leq P(x,z)<+\infty,\quad \forall (x,z)\in\R_+\times \R.
\end{align}
\item[{\rm(ii)}] The expected largest shortfall (expectation of the total capital injection) under any admissible $(\theta,c)\in\mathbb{U}^r$ is positive that there exists some function $L(x,z):\R_+\times {\cal O}_Z\to (0,+\infty)$ such that, for all $(\theta,c)\in\mathbb{U}^r$,
\begin{align}\label{eq:dAstarpositive}
\Ex\left[\int_0^{\infty} e^{-\rho t}dA^{\theta,c}_t \right]\geq L(x,z)>0,\quad \forall (x,z)\in\R_+\times {\cal O}_Z.
\end{align}
\end{itemize}
 Here,  the largest shortfall (optimal capital injection) under the admissible strategy $(\theta,c)$ is given by
$A_t^{\theta,c} =0\vee \sup_{s\leq t}(Z_{s}-V_{s}^{\theta,c})$ for $t\geq0$.
\end{lemma}

\begin{remark}\label{remA}
First, Lemma \ref{lem:inject-captial}-(i) provides an upper bound of the expected optimal capital injection, i.e. the expectation of the discounted total capital injection is always finite, which is an important fact to support that our problem formulation in \eqref{eq:w} is well defined as it excludes the possibility of requiring the injection of infinite capital. Recall that 
$A^{*}_t =\sup_{s\leq t}(V_{s}^{\theta^*,c^*}-Z_s)^{-}$ can also be understood as the largest shortfall of the wealth below the benchmark process, Lemma \ref{lem:inject-captial}-(i) shows that the expected largest shortfall is finite when there is no actual capital injection in the fund management.

On the other hand, Lemma \ref{lem:inject-captial}-(ii) provides a positive lower bound of the expected optimal capital injection, which implies that the capital injection is always necessary to meet the dynamic benchmark floor constraint $Z_t \le A^*_t + V^{\theta^*,c^*}_t$ for all $t\geq 0$. As the capital injection is needed, the admissible control space of the portfolio and consumption pair $(\theta,c)$ is enlarged from the admissible control space in \cite{Merton1971} under no-bankruptcy constraint. Indeed, due to the positive capital injection, we note that the controlled wealth process $V^{\theta^*, c^*}$ may become negative as the fund manager is more risk-taking. To further elaborate that our problem formulation is well defined in the sense that the wealth process $V^{\theta^*, c^*}$ will remain at a reasonable level, we can show that the expectation of the discounted wealth process is always bounded below by a constant. Indeed, we have $V^{\theta^*,c^*}_t\geq Z_t-A_t^*\geq -A_t^*,~ \forall t>0$.
Integration by parts yields that, for all $t>0$,
$\int_0^{t} e^{-\rho s}dA_s^{*} = e^{-\rho t}A_{t}^{*}-x + \rho\int_0^{t}  e^{-\rho s}A_s^*ds$.
It follows from  Lemma \ref{lem:inject-captial} that
\begin{align*}
\Ex\left[e^{-\rho t}A_{t}^{*}\right]\leq \Ex\left[\int_0^{t} e^{-\rho s}dA_s^{*}\right]+x\leq \Ex\left[\int_0^{\infty} e^{-\rho s}dA_s^{*}\right]+x<+\infty.
\end{align*}
This also implies that $\Ex[e^{-\rho t} V^{\theta^*,c^*}_t]>-\infty$ for any $t>0$ and $(x,z)\in\R_+^2$. In other words, the expectation of the discounted wealth has a finite credit line although $V^{\theta^*, c^*}_t$ may become negative.
\end{remark}

\begin{remark}
We focus on the infinite-horizon optimal tracking problem \eqref{eq_orig_pb} or \eqref{eq:w}, as it allows semi-explicit expressions for the value function and optimal feedback functions under CRRA utility (see Section \ref{sec:CRRA}). These closed-form results facilitate numerical examples and discussion on financial implications. While our duality approach can also be directly extended to the finite-horizon case. Moreover, in the finite-horizon setting, the condition $\rho > \rho_0$ on the discount rate in Assumption $(\bm{A_{o}})$ is no longer required.
\end{remark}

\section{The Case with CRRA Utility and GBM Benchmark}\label{sec:CRRA}

In this section, we focus on the CRRA utility function given by
\begin{align}\label{eq:Ui}
U(x)=\begin{cases}
\displaystyle \frac{1}{p}x^{p}, & p<1\ \text{and}\ p\neq 0,\\
\displaystyle \ln x, & p=0,
\end{cases}
\end{align}
 where the risk-aversion level of the fund manager is represented by $1-p\in(0,+\infty)$. It can be easily checked that the CRRA utility function \eqref{eq:Ui} satisfies Assumption $(\bm{A_{o}})$.  We also specify the benchmark process $Z=\left(Z_t\right)_{t\geq 0}$ as the GBM satisfying
\begin{align}\label{eq:Zt-GBM}
d Z_t=\mu_Z Z_t d t+\sigma_Z Z_t d W_t^{\gamma}, \quad Z_0=z\geq0,
\end{align}
with the return rate $\mu_Z\in\R$ and the volatility $\sigma_Z\geq0$. In this case, Assumptions $(\bm{A_{Z}})$ and $(\bm{A_{\rho}})$ can be simplified to that
\begin{itemize}
\item[$(\bm{A_{Z}'})$] the return rate of benchmark process satisfies $\mu_Z\geq \eta:=\sigma_Z\gamma^{\top}\sigma^{-1}\mu$.
\item[$(\bm{A_{\rho}'})$] the discount factor $\rho>\rho_0$ with $\rho_0>0$ being a constant depending on $p<1$ in $(\bm{A_{o}})$ defined by
\begin{align}\label{boundrho0-CRRA}
    \rho_0:=
    \begin{cases}
        \max\{\mu_Z,\alpha p/(1-p)\},&\text{if}~p\in (0,1),\\[0.5em]
        \max\{\mu_Z,0\},&\text{if}~p\leq 0.
    \end{cases}
    \end{align}
\end{itemize}

Under the CRRA utility and GBM benchmark, we can in fact derive the explicit expression of the dual function, and then get  the optimal strategy of portfolio and consumption in the semi-analytical feedback form in terms of the primal state variables.

\begin{proposition}\label{prop:sol-HJB}
Let Assumptions $(\bm{A_{Z}'})$ and  $(\bm{A_{\rho}'})$ hold. The dual value function in \eqref{eq:dual-func} admits the explicit expression that,  for $(y,z)\in (0,\beta]\times\mathbb{R}_+$,
\begin{align}\label{sol:hat-v}
\hat{v}(y,z)=
\begin{cases}
\displaystyle \frac{(1-p)^3}{p(\rho(1-p)-\alpha p)}y^{-\frac{p}{1-p}}+\frac{(1-p)^2}{\rho(1-p)-\alpha p}\beta^{-\frac{1}{1-p}}y+z\left(y-\frac{\beta^{-(\kappa-1)}}{\kappa}y^{\kappa}\right),& p\neq 0,\\[1em]
\displaystyle -\frac{1}{\rho} \ln y-\frac{2}{\rho}+\frac{\alpha}{\rho^2}+\frac{1}{\rho \beta} y+z\left(y-\frac{\beta^{-(\kappa-1)}}{\kappa}y^{\kappa}\right),&p=0.
\end{cases}
\end{align}
Here, the constant $\kappa$ denotes the positive root of the quadratic equation $\alpha \kappa^2+(\rho-\eta-\alpha)\kappa+\mu_Z-\rho=0$ that $\kappa:=(-(\rho-\eta-\alpha)+\sqrt{(\rho-\eta-\alpha)^2+4\alpha(\rho-\mu_Z)})/(2\alpha)>0$ with $\alpha= \mu^{\top}(\sigma \sigma^{\top})^{-1} \mu/2$ and $\eta=\sigma_Z\gamma^{\top}\sigma^{-1}\mu$.
\end{proposition}

Similar to Corollary 3.7 in \cite{DLPY22}, the value function $v$ of the primal problem \eqref{eq:u0} can be recovered via the inverse transform, which admits a semi-analytical expression involving an implicit function. It follows that  $v(x,z)=\inf_{y\in(0,\beta]}(\hat{v}(y,z)+yx)$. Then, $x=g(y,z):=-\hat{v}_y(y,z)$. Let us define $f(\cdot,z)$ as the inverse of $g(\cdot,z)$, and hence $v(x,z)=\hat{v}(f(x,z),z)+xf(x,z)$. Here, for $p<1$, we have that $f(x,z)$ can be uniquely determined by
\begin{align}\label{f1}
x=\frac{(1-p)^2}{\rho(1-p)-\alpha p}\left(f(x,z)^{-\frac{1}{1-p}}-\beta^{-\frac{1}{1-p}}\right)+z\left(\beta^{-(\kappa-1)}f(x,z)^{\kappa-1}-1\right).
\end{align}
We then have the following result which directly follows from Corollary \ref{coro:optimal-control} and Proposition \ref{prop:sol-HJB}.
\begin{corollary}\label{coro:optimal}
Let Assumptions $(\bm{A_{Z}'})$ and  $(\bm{A_{\rho}'})$ hold. Then, the value function $v$ of the primal problem \eqref{eq:u0} is given by, for all $(x,z)\in\R_+^2$,
\begin{align}\label{sol:v}
v(x,z)=
\begin{cases}
\displaystyle \frac{(1-p)^2}{p(\rho(1-p)-\alpha p)}f(x,z)^{-\frac{p}{1-p}}-\frac{1-\kappa}{\kappa}\beta^{-(\kappa-1)}zf (x,z)^{\kappa},\quad\quad p<1,~p\neq 0,\\[1em]
\displaystyle -\frac{1}{\rho} \ln f(x,z)-\frac{1}{\rho}+\frac{\alpha}{\rho^2}-\frac{1-\kappa}{\kappa}\beta^{-(\kappa-1)}zf (x,z)^{\kappa},\qquad\qquad\quad  p=0.
\end{cases}
\end{align}
Furthermore, the optimal feedback control function is given by, for all $(x,z)\in\R_+^2$,
\begin{align}\label{feedbackcontrol}
&\theta^*(x,z)=
 (\sigma\sigma^{\top})^{-1}\mu \left(\frac{1-p}{\rho(1-p)-\alpha p}f(x,z)^{-\frac{1}{1-p}}+(1-\kappa) \beta^{-(\kappa-1)}zf(x,z)^{\kappa-1}\right)\nonumber\\
&\qquad\qquad+(\sigma\sigma^{\top})^{-1}\sigma_Z\sigma\gamma z \beta^{-(\kappa-1)}f(x,z)^{\kappa-1},\\
&c^*(x,z)= f(x,z)^{\frac{1}{p-1}}.
\end{align}
In the special case when $p=-\kappa/(1-\kappa)$, where $\kappa$ is given in Proposition \ref{prop:sol-HJB}, the optimal feedback control functions admit the following explicit expressions that, for all $(x,z)\in\R_+^2$,
\begin{align}
\theta^*(x,z)&=
\left[(\sigma\sigma^{\top})^{-1}\mu \left(\frac{1-p}{\rho(1-p)-\alpha p}+(1-\kappa) \beta^{-(\kappa-1)}z\right)+(\sigma\sigma^{\top})^{-1}\sigma_Z\sigma\gamma z\beta^{-(\kappa-1)}\right]\nonumber\\
&\quad\times \frac{(\rho(1-p)-\alpha p)(x+z)+(1-p)^2\beta^{-\frac{1}{1-p}}}{(1-p)^2+(\rho(1-p)-\alpha p)\beta^{\frac{1}{1-p}}z}, \\
c^*(x,z)&=\frac{(\rho(1-p)-\alpha p)(x+z)+(1-p)^2\beta^{-\frac{1}{1-p}}}{(1-p)^2+(\rho(1-p)-\alpha p)\beta^{\frac{1}{1-p}}z}.
\end{align}
\end{corollary}

\begin{remark}\label{rem0}
In view of \eqref{f1}, when $x=0$, we have $f(0,z)=\beta$ for all $z\geq0$. Consequently, in the case when $d=1$, Corollary \ref{coro:optimal} with the setting of $\gamma=1$ and $\mu>0$ yields that
\begin{align}\label{eq:optimal-x0}
c^*(0,z)=\beta^{\frac{1}{p-1}}>0,\quad \theta^*(0,z)= \frac{\mu}{\sigma^2} \left(\frac{1-p}{\rho(1-p)-\alpha p}\beta^{\frac{1}{p-1}}+(1-\kappa) z\right)+\frac{\sigma_Z }{\sigma}z>0.
\end{align}
This implies that, when the state level $X_t^*=0$ (described as in Corollary \ref{coro:optimal-control}) at time $t>0$, both the optimal portfolio $\theta^*_t$ and the optimal consumption $c^*_t$ are strictly positive. That is, at the extreme case when the wealth process $V_t^{*}$ equals the benchmark $Z_t$, the allowance of capital injection motivates the fund manager to be more risk seeking by strategically choosing positive consumption to attain a higher expected utility.
\end{remark}

The next result provides more accurate estimations of the expected largest shortfall (expectation of the total optimal capital injection) under the CRRA utility and GBM benchmark.
\begin{corollary}\label{coro:inject-captial-CRRA-GBM}
Let Assumptions $(\bm{A_{Z}'})$ and $(\bm{A_{\rho}'})$  hold.  Then, we have
\begin{itemize}
\item[{\rm(i)}] The expected largest shortfall (expectation of the total capital injection) under the optimal strategy $(\theta^*,c^*)$ in Corollary \ref{coro:optimal-control} is finite and admits the precise expression: for $(x,z)\in\R_+^2$,
\begin{align}\label{eq:injection-CRRA}
&\Ex\left[\int_0^{\infty} e^{-\rho t}dA^{\theta^*,c^*}_t \right]= \frac{1-p}{\rho(1-p)-\alpha p}\beta^{-\frac{2-p}{1-p}}f(x,z)+\frac{1-\kappa}{\kappa}\beta^{-\kappa}zf (x,z)^{\kappa}. 
\end{align}.
\item[{\rm(ii)}] The expected largest shortfall (expectation of the total capital injection) under any admissible $(\theta,c)\in\mathbb{U}^r$ is positive that 
\begin{align}\label{eq:dAstarpositive-CRRA}
\Ex\left[\int_0^{\infty} e^{-\rho t}dA^{\theta,c}_t \right]\geq z\frac{1-\kappa}{\kappa}\left(1+\frac{x}{z}\right)^\frac{\kappa}{\kappa-1}>0,\quad \forall (x,z)\in\R_+\times (0,\infty).
\end{align}
\end{itemize}
 Here,  the largest shortfall (capital injection) under the admissible strategy $(\theta,c)$ is given by
$A_t^{\theta,c} =0\vee \sup_{s\leq t}(Z_{s}-V_{s}^{\theta,c})$ for $t\geq0$.
\end{corollary}

We next present some structural properties of the optimal pair $(\theta^*,c^*)=(\theta^*_t,c_t^*)_{t\geq0}$, and we shall consider the case when $d=1$ and the return rate $\mu>r=0$. The next result characterizes the asymptotic behavior of the optimal portfolio-wealth ratio and the  consumption-wealth ratio obtained in Corollary~\ref{coro:optimal} when the initial wealth level tends to infinity. The proof of the next lemma is provided in Section~\ref{sec:proof}.
\begin{lemma}\label{lem:asymptotic-control}
Let Assumptions $(\bm{A_{Z}'})$ and  $(\bm{A_{\rho}'})$ hold. Consider the optimal feedback control functions $\theta^*(x,z)$ and $c^*(x,z)$ for $(x,z)\in\R_+^2$ provided in Corollary~\ref{coro:optimal}. Then, for any $z>0$ fixed,  we have
\begin{align}\label{eq:limtheta}
\lim_{x\to +\infty}\frac{\theta^*(x,z)}{x}&=
\begin{cases}
\displaystyle \frac{\mu}{\sigma^2(1-p)}, &p>p_1,\\
\displaystyle \frac{\mu}{\sigma^2(1-p_1)}+\frac{\sigma_Z}{\sigma}\frac{C^*(p_1)\beta^{\frac{1}{1-p_1}}z}{1+C^*(p_1)\beta^{\frac{1}{1-p_1}}z},&p=p_1,\\[0.8em]
\displaystyle \frac{\mu}{\sigma^2(1-p_1)}+\frac{\sigma_Z}{\sigma},&p<p_1,
\end{cases}
\end{align}
and
\begin{align}\label{eq:limc}
\lim_{x\to +\infty}\frac{c^*(x,z)}{x}&=
\begin{cases}
\displaystyle ~~~~~~~~~C^*(p), &p>p_1,\\[0.6em]
\displaystyle \frac{C^*(p_1)}{1+C^*(p_1)\beta^{-\frac{1}{1-p_1}}z},&p=p_1,\\[0.8em]
~~~~~~~~~~0, & p<p_1.
\end{cases}
\end{align}
Here, the constant $C^*(p):=\frac{\rho(1-p)-\alpha p}{(1-p)^2}$ coincides with the constant limit in the Merton solution as in \cite{Merton1971}, and $p_1\in(-\infty,0)$ is defined by $p_1:=-\frac{\kappa}{1-\kappa}$.
\end{lemma}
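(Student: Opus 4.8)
The plan is to work directly with the implicit characterization of the feedback function $f(x,z)$ in \eqref{f1} (and \eqref{f2} for $p=0$) and track the asymptotics of $f(x,z)$ as $x\to+\infty$ with $z>0$ fixed. The first observation is that $f(\cdot,z)$ is decreasing in its first argument and that $x\to+\infty$ forces $f(x,z)\downarrow 0$: indeed, in \eqref{f1} the right-hand side is a sum of two terms, $\frac{(1-p)^2}{\rho(1-p)-\alpha p}(f^{-\frac{1}{1-p}}-\beta^{-\frac{1}{1-p}})$ which blows up like $f^{-\frac{1}{1-p}}$, and $z(\beta^{-(\kappa-1)}f^{\kappa-1}-1)$ which behaves like $z\beta^{-(\kappa-1)}f^{\kappa-1}$ (recall $\kappa>0$, and note $\kappa-1<0$ so this also blows up). So as $x\to+\infty$ we need $f\to 0$, and the dominant balance is governed by comparing the two exponents $\frac{1}{1-p}$ and $1-\kappa$ appearing in the powers $f^{-\frac{1}{1-p}}$ and $f^{\kappa-1}=f^{-(1-\kappa)}$. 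Setting these equal, $\frac{1}{1-p}=1-\kappa$, gives exactly $p=-\frac{\kappa}{1-\kappa}=p_1$, which is the critical exponent appearing in the statement.

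Next I would split into the three regimes. When $p>p_1$ one checks that $\frac{1}{1-p}>1-\kappa$, so the Merton-type term $f^{-\frac{1}{1-p}}$ dominates in \eqref{f1}; hence $x\sim\frac{(1-p)^2}{\rho(1-p)-\alpha p}f(x,z)^{-\frac{1}{1-p}}$, i.e. $f(x,z)\sim (C^*(p)x)^{-(1-p)}$ with $C^*(p)=\frac{\rho(1-p)-\alpha p}{(1-p)^2}$. Plugging this into $c^*(x,z)=f(x,z)^{\frac{1}{p-1}}$ gives $c^*(x,z)/x\to C^*(p)$, and plugging into the portfolio formula \eqref{feedbackcontrol} (with $d=1$, $\gamma=1$) one sees the leading term $(\sigma\sigma^{\top})^{-1}\mu\cdot\frac{1-p}{\rho(1-p)-\alpha p}f^{-\frac{1}{1-p}}$ dominates the $z f^{\kappa-1}$ terms, giving $\theta^*(x,z)/x\to\frac{\mu}{\sigma^2(1-p)}$. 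When $p<p_1$ the roles reverse: $f^{\kappa-1}$ dominates, so $x\sim z\beta^{-(\kappa-1)}f(x,z)^{\kappa-1}$, hence $f(x,z)^{\kappa-1}\sim\frac{\beta^{\kappa-1}x}{z}$; then $c^*(x,z)=f^{\frac{1}{p-1}}$ with $\frac{1}{p-1}$ a fixed negative number and $f^{\frac{1}{p-1}}=o(x)$ because $f\to 0$ at a rate slower than $x^{-1}$ would require (one compares exponents: $f^{\frac{1}{p-1}}\sim (x z^{-1}\beta^{\kappa-1})^{\frac{1}{(p-1)(\kappa-1)}}$ and $\frac{1}{(p-1)(\kappa-1)}<1$ in this regime), so $c^*/x\to 0$; for the portfolio the term $\bigl[(\sigma\sigma^{\top})^{-1}\mu(1-\kappa)+(\sigma\sigma^{\top})^{-1}\sigma_Z\sigma\gamma\bigr]z\beta^{-(\kappa-1)}f^{\kappa-1}$ dominates, which after substituting $z\beta^{-(\kappa-1)}f^{\kappa-1}\sim x$ yields $\theta^*(x,z)/x\to\frac{\mu(1-\kappa)}{\sigma^2}+\frac{\sigma_Z}{\sigma}=\frac{\mu}{\sigma^2(1-p_1)}+\frac{\sigma_Z}{\sigma}$ using $1-\kappa=\frac{1}{1-p_1}$.

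Finally the critical case $p=p_1$: here both powers in \eqref{f1} coincide, $f^{-\frac{1}{1-p_1}}=f^{-(1-\kappa)}=f^{\kappa-1}$, so \eqref{f1} becomes $x\sim\bigl(\frac{(1-p_1)^2}{\rho(1-p_1)-\alpha p_1}+z\beta^{-(\kappa-1)}\bigr)f(x,z)^{\kappa-1}$ up to lower-order constants, giving a precise constant prefactor; carrying this prefactor through $c^*=f^{\frac{1}{p_1-1}}$ and through the portfolio formula produces exactly the two interpolated expressions in the statement, where the weight $\frac{C^*(p_1)\beta^{\frac{1}{1-p_1}}z}{1+C^*(p_1)\beta^{\frac{1}{1-p_1}}z}$ arises from the ratio of the two competing constants $z\beta^{-(\kappa-1)}$ and $\frac{(1-p_1)^2}{\rho(1-p_1)-\alpha p_1}=\frac{1}{C^*(p_1)}$. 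The main obstacle I anticipate is bookkeeping the subleading terms carefully enough to be sure they vanish in the limit — in particular that the constant $-z$ and $-\beta^{-\frac{1}{1-p}}$-type corrections in \eqref{f1} really are negligible relative to the blow-up term, and in the critical case that the "$-1$" and "$-\beta^{-\frac{1}{1-p_1}}$" corrections do not contaminate the constant prefactor. This is routine but must be done with some care; the $p=0$ (log-utility) sub-case should be handled separately using \eqref{f2}, where the analogous dominant balance is between $\frac{1}{\rho f}$ and $z\beta^{-(\kappa-1)}f^{\kappa-1}$, and since $p_1<0<\dots$ this falls into the $p>p_1$ regime and reproduces $C^*(0)=\rho$.
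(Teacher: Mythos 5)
Your proposal is correct and is essentially the same approach as the paper's: both reduce the limit $x\to+\infty$ to the limit $y=f(x,z)\to 0$ via the dual relation $x=-\hat v_y(y,z)$ (which is just \eqref{f1}) and then extract the dominant power of $y$, with the crossover determined by comparing the exponents $\tfrac{1}{1-p}$ and $1-\kappa$, i.e.\ by the sign of $\kappa+\tfrac{p}{1-p}$. The paper organizes the same dominant-balance computation as a ratio $\lim_{y\to 0}\tfrac{y^{\frac{1}{p-1}}}{-\hat v_y(y,z)}$ (and its analogue for $\theta^*$) rather than first inverting \eqref{f1} for $f$, but the bookkeeping is identical, including your separate treatment of the log-utility case via \eqref{f2} (which falls into the $p>p_1$ regime since $p_1<0$).
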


\begin{remark}\label{compareM}
Based on Corollary \ref{coro:inject-captial-CRRA-GBM} and Lemma \ref{lem:asymptotic-control}, we have some further comparison discussions between our optimal tracking problem \eqref{eq:w} and the main result in \cite{Merton1971}:
\begin{itemize}
\item[{\rm(i)}] In the case $\mathrm{v}\geq z\geq0,~\mu_Z=\sigma_Z=0$ and $\beta=+\infty$, (i.e., the benchmark process is always zero and the cost of capital injection is infinity), then the optimal tracking problem \eqref{eq:w} degenerates into the classical Merton's problem. Indeed, in this degenerate case, the equation $\alpha \kappa^2+(\rho-\eta-\alpha)\kappa+\mu_Z-\rho=0$ in Proposition \ref{prop:sol-HJB} reduces to $\alpha \kappa^2+(\rho-\alpha)\kappa-\rho=0$, which yields $\kappa=1$. As $\kappa=1$ and $\beta=+\infty$ in Eq. \eqref{f1}, we can see that, for all $x\geq 0$,
\begin{align}\label{eq:z=0}
x=\frac{(1-p)^2}{\rho(1-p)-\alpha p}f(x,z)^{-\frac{1}{1-p}}.
\end{align}
Then, it follows from \eqref{eq:z=0} and Corollary \ref{coro:optimal} that
\begin{align}\label{eq:Merton}
\theta^*(x,z)=\theta^{\rm{Mer}}(x):=\frac{\mu}{\sigma^2(1-p)}x,\quad
c^*(x,z)=c^{\rm{Mer}}(x):=\frac{\rho(1-p)-\alpha p}{(1-p)^2}x,
\end{align}
which is exactly the optimal solution of the classical Merton's problem. Moreover, using \eqref{eq:Merton}, we can easily see that the local time $L_t=0$ for all $t\geq 0$, thus there is no capital injection in this case.

\item[{\rm(ii)}] In the case $\mathrm{v}\geq z\geq0$, $\mu_Z=\sigma_Z=0$ and $\beta\in(0,+\infty)$, the benchmark is constant $z$, but the fund manager is still allowed to inject capital and the cost of capital injection is finite, we find that the optimal portfolio and consumption strategies differ from those in the classical Merton's problem. In fact, as in the above case (i), we still have $\kappa=1$ in Eq. \eqref{f1}. Hence, for all $x\geq 0$,
\begin{align}\label{eq:zgeq0}
x=\frac{(1-p)^2}{\rho(1-p)-\alpha p}\left(f(x,z)^{-\frac{1}{1-p}}-\beta^{-\frac{1}{1-p}}\right).
\end{align}
Then, it follows from \eqref{eq:z=0} and Corollary \ref{coro:optimal} that
\begin{align}\label{eq:strategyzgeq0}
\begin{cases}
\displaystyle \theta^*(x,z)=\theta^{\rm{Mer}}(x)+\underbrace{\frac{\mu}{\sigma^2}\frac{1-p}{\rho(1-p)-\alpha p}\beta^{-\frac{1}{1-p}}}_{\text{positive adjustment by capital injection}},\\
\displaystyle c^*(x,z)=c^{\rm{Mer}}(x) +\underbrace{\qquad\beta^{-\frac{1}{1-p}}\qquad}_{\text{positive adjustment by capital injection}}.
\end{cases}
\end{align}
Although both the Merton problem without capital injection and our problem in \eqref{eq:w} are solvable, due to the encouragement of risk-taking from the capital injection, the fund manager in our problem \eqref{eq:w} adopts more aggressive optimal portfolio and consumption strategies with additional positive adjustment terms as shown in \eqref{eq:Merton} and \eqref{eq:strategyzgeq0}. It is also observed that these positive adjustments are independent of the wealth level $x$. In particular, the adjustment terms are decreasing w.r.t. the cost parameter $\beta$. When $\beta$ tends to infinity, it is clear that both adjustment terms of capital injection will be vanishing and we have
$\lim_{\beta\to\infty}\theta^*(x,z)=\theta^{\rm{Mer}}(x)~\text{and}~\lim_{\beta\to\infty}c^*(x,z)=c^{\rm{Mer}}(x),~\forall x\geq0.$ See further discussions on their financial implications in Section \ref{sec:num}.

\item[{\rm(iii)}] In the case $\mathrm{v}\geq z>0$ and $\mu_Z>\eta$, the capital injection is indeed necessary for the agent in our optimal tracking problem \eqref{eq:w}. Corollary \ref{coro:inject-captial-CRRA-GBM} shows this fact under any admissible strategy. 
That is, if we consider the classical Merton's problem by requiring the strict benchmark outperforming constraint $V_t\geq Z_t$, a.s., for all $t\geq 0$, the admissible set of Merton problem under the outperforming constraint is empty. Therefore, it is necessary to consider the relaxed benchmark outperforming constraint using the strategic capital injection such that the wealth process is allowed to be negative from time to time.
\end{itemize}
\end{remark}

 In the setting of utility maximization of consumption without benchmark tracking, \cite{CK1996} has discussed the concavity of the optimal feedback consumption function in terms of the wealth level induced by the income uncertainty. It was pointed out therein that the concavity of consumption can imply several interesting economic insights including the real-life observations that the financial risk-taking is often strongly related to wealth. However, it has also been shown in various extended models that the optimal consumption function may turn out to be convex within some wealth intervals when the consumption performance and risk-taking are affected by the past-average dependent habit formation (see \cite{Liu23}), the consumption drawdown constraint (see \cite{ABY} and \cite{LYZ}) or the possible negative terminal debt (see \cite{ChenV}).  The following proposition presents the monotonicity and concavity/convexity of the optimal feedback control functions $x\to \theta^*(x,z)$ and $x\to c^*(x,z)$ in our new problem formulation.

\begin{proposition}\label{lem:prop}
Let Assumptions $(\bm{A_{Z}'})$ and  $(\bm{A_{\rho}'})$ hold. Consider the optimal feedback control functions $\theta^*(x,z)$ and $c^*(x,z)$ for $(x,z)\in\R_+^2$ provided in Corollary~\ref{coro:optimal}. Then, for any $z>0$ fixed,  we have
\begin{itemize}
\item[{\rm(i)}]$x\to c^*(x,z)$ is increasing;
\item[{\rm(ii)}] If $p\in(p_1,1)$, $x\to c^*(x,z)$ is strictly convex; if $p\in(-\infty,p_1)$, $x\to c^*(x,z)$ is strictly concave; and, if $p=p_1$, then $c^*(x,z)$ is linear in $x$;
\item[{\rm(iii)}] For the correlative coefficient $\gamma=1$, $x\to \theta^*(x,z)$ is increasing;
\item[{\rm(iv)}] For the correlative coefficient $\gamma=1$, if $p\in(-\infty,p_1)\cup(p_2,1)$, $x\to \theta^*(x,z)$ is strictly convex; if $p\in(p_1,p_2)$, $x\to \theta^*(x,z)$ is strictly concave; and, if $p=p_1$ or $p=p_2$, $\theta^*(x,z)$ is linear in $x$.
\end{itemize}
Here, $p_1$ is given in Lemma \ref{lem:asymptotic-control} and the critical point $p_2>p_1$ is given by $p_2:=\frac{\sigma\sigma_Z-\mu \kappa}{\mu(1-\kappa)+\sigma\sigma_Z}\in(-\infty,1).$
\end{proposition}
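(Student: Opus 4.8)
The plan is to read everything off the semi-analytical formulas of Corollary~\ref{coro:optimal}, but after reparametrizing by the optimal consumption itself. Write $c:=c^{*}(x,z)$; since $c^{*}(x,z)=f(x,z)^{\frac{1}{p-1}}$ we have $f=c^{-(1-p)}$, hence $f^{-\frac{1}{1-p}}=c$ and $f^{\kappa-1}=c^{m}$ with the exponent
\begin{align*}
m:=(1-p)(1-\kappa).
\end{align*}
Because $1-p>0$ and, under $\mu_{Z}\ge\eta$ and $\rho>\rho_{0}$, the positive root $\kappa$ of $\alpha\kappa^{2}+(\rho-\eta-\alpha)\kappa+\mu_{Z}-\rho=0$ satisfies $\kappa\le 1$ (the quadratic evaluated at $\kappa=1$ equals $\mu_{Z}-\eta\ge 0$, and one root is positive and one negative since $(\mu_{Z}-\rho)/\alpha<0$), we get $m\ge 0$, with $m>0$ in the nondegenerate case $\mu_{Z}>\eta$ (the borderline $\kappa=1$ just gives linear controls). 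First I would substitute $f=c^{-(1-p)}$ into the defining relation \eqref{f1}, which collapses to the two-term form
\begin{align*}
x+C_{0}=a\,c+b\,c^{m},\qquad a:=\frac{(1-p)^{2}}{\rho(1-p)-\alpha p}>0,\quad b:=z\beta^{-(\kappa-1)}>0,\quad C_{0}:=a\beta^{-\frac{1}{1-p}}+z,
\end{align*}
where $a>0$ follows from $\rho>\rho_{0}$ (which forces $\rho(1-p)-\alpha p>0$). Likewise, specialising \eqref{feedbackcontrol} to $d=1$, $\gamma=1$, $\mu>0$ and expressing it through $c$ gives
\begin{align*}
\theta^{*}(x,z)=A_{1}\,c+A_{2}\,c^{m},\qquad A_{1}:=\frac{\mu}{\sigma^{2}}\cdot\frac{1-p}{\rho(1-p)-\alpha p}>0,\quad A_{2}:=z\beta^{-(\kappa-1)}\Big(\frac{\mu}{\sigma^{2}}(1-\kappa)+\frac{\sigma_{Z}}{\sigma}\Big)\ge 0.
\end{align*}

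Next I would differentiate $x+C_{0}=a c+b c^{m}$ implicitly in $x$. One differentiation gives $c_{x}=(a+bm c^{m-1})^{-1}$; since $a>0$, $b>0$, $m\ge 0$ and $c>0$, the denominator is $\ge a>0$, so $c_{x}>0$, which is part (i). A second differentiation yields
\begin{align*}
c_{xx}=-\frac{b\,m(m-1)\,c^{m-2}(c_{x})^{2}}{a+bm c^{m-1}},
\end{align*}
so $\mathrm{sgn}(c_{xx})=-\,\mathrm{sgn}\big(m(m-1)\big)=-\,\mathrm{sgn}(m-1)$ (for $m>0$). Since $p\mapsto m=(1-p)(1-\kappa)$ is strictly decreasing and $m=1$ exactly when $p=1-\frac{1}{1-\kappa}=-\frac{\kappa}{1-\kappa}=p_{1}$, we get $m>1\Leftrightarrow p<p_{1}$ and $0<m<1\Leftrightarrow p_{1}<p<1$; hence $c^{*}$ is strictly concave for $p<p_{1}$, strictly convex for $p_{1}<p<1$, and (since then $x+C_{0}=(a+b)c$) linear at $p=p_{1}$. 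This is part (ii).

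For the portfolio I would differentiate $\theta^{*}=A_{1}c+A_{2}c^{m}$. First, $\theta^{*}_{x}=(A_{1}+A_{2}m c^{m-1})c_{x}>0$ because $A_{1}>0$, $A_{2}\ge0$, $m\ge0$, $c>0$ and $c_{x}>0$, giving part (iii). Substituting the formula for $c_{xx}$ into $\theta^{*}_{xx}=A_{2}m(m-1)c^{m-2}(c_{x})^{2}+(A_{1}+A_{2}m c^{m-1})c_{xx}$ and simplifying produces
\begin{align*}
\theta^{*}_{xx}=m(m-1)\,c^{m-2}(c_{x})^{2}\cdot\frac{A_{2}a-A_{1}b}{a+bm c^{m-1}},
\end{align*}
so $\mathrm{sgn}(\theta^{*}_{xx})=\mathrm{sgn}(m-1)\cdot\mathrm{sgn}(A_{2}a-A_{1}b)$. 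A direct computation gives $A_{2}a-A_{1}b=\frac{z\beta^{-(\kappa-1)}(1-p)}{\sigma^{2}(\rho(1-p)-\alpha p)}\big[(\mu(1-\kappa)+\sigma\sigma_{Z})(1-p)-\mu\big]$, whose sign is that of the bracket; since $\mu(1-\kappa)+\sigma\sigma_{Z}>0$ and the bracket is strictly decreasing in $p$ and vanishes exactly at $p=1-\frac{\mu}{\mu(1-\kappa)+\sigma\sigma_{Z}}=\frac{\sigma\sigma_{Z}-\mu\kappa}{\mu(1-\kappa)+\sigma\sigma_{Z}}=p_{2}$, we obtain $A_{2}a-A_{1}b>0\Leftrightarrow p<p_{2}$. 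Using $p_{1}<p_{2}$ (one also checks $p_{2}-p_{1}=\frac{\sigma\sigma_{Z}}{(\mu(1-\kappa)+\sigma\sigma_{Z})(1-\kappa)}>0$) and combining the two sign factors over the regions determined by $p_{1},p_{2}$ gives: both factors positive for $p<p_{1}$ (strictly convex), $-$ and $+$ for $p_{1}<p<p_{2}$ (strictly concave), both negative for $p_{2}<p<1$ (strictly convex), and $\theta^{*}_{xx}\equiv0$ at $p=p_{1}$ or $p=p_{2}$ (linear). This is part (iv).

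The only real obstacle is bookkeeping: simplifying $\theta^{*}_{xx}$ after inserting $c_{xx}$ to isolate the residual factor $A_{2}a-A_{1}b$, and then verifying it changes sign precisely at $p_{2}$ — once that is done, all monotonicity and convexity/concavity claims are immediate sign reads off $m-1$ and $A_{2}a-A_{1}b$. The structural trick that keeps the proof short is the reparametrization by $c^{*}$, which turns the implicit relation for $f$ into $x+C_{0}=ac+bc^{m}$ and exhibits both $c^{*}$ and $\theta^{*}$ as the same affine combination of $c$ and $c^{m}$; throughout one keeps the standing assumptions $d=1$, $\mu>0$, $\gamma=1$, $z>0$ and $\mu_{Z}>\eta$ (so that $\kappa<1$, $m>0$ and $p_{1}<p_{2}$ are genuine), together with $\rho>\rho_{0}\Rightarrow\rho(1-p)-\alpha p>0$.
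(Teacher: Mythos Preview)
Your proof is correct and reaches the same conclusions as the paper, but via a genuinely different and more economical route. The paper works directly with the implicit function $f(x,z)$ and the dual $\hat{v}$: it differentiates $\hat{v}_{y}(f,z)=-x$ to obtain $f_{x}=-1/\hat{v}_{yy}$, then computes $\partial_{x}c^{*}$, $\partial_{xx}c^{*}$, $\partial_{x}\theta^{*}$, $\partial_{xx}\theta^{*}$ by repeatedly inserting the explicit expressions for $\hat{v}_{yy},\hat{v}_{yyy},\hat{v}_{yyz}$ from \eqref{sol:hat-v} and simplifying; the final sign determinations come from the factors $\kappa+\tfrac{p}{1-p}$ and $\kappa+\tfrac{p}{1-p}-\tfrac{\sigma\sigma_{Z}}{\mu}$, whose zeros are $p_{1}$ and $p_{2}$. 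Your reparametrization $c=f^{1/(p-1)}$ is the structural shortcut: it turns the implicit relation \eqref{f1} into $x+C_{0}=ac+bc^{m}$ and exposes $\theta^{*}$ as the \emph{same} type of combination $A_{1}c+A_{2}c^{m}$. The two-term power structure then makes all four sign questions reduce to the two scalars $m-1$ and $A_{2}a-A_{1}b$, with no further bookkeeping. The paper's factors $\kappa+\tfrac{p}{1-p}$ and $\kappa+\tfrac{p}{1-p}-\tfrac{\sigma\sigma_{Z}}{\mu}$ are exactly (up to positive multiples) your $1-m$ and $A_{1}b-A_{2}a$, so the two arguments are isomorphic; yours simply packages the algebra more efficiently and makes the linearity at $p=p_{1}$ and $p=p_{2}$ transparent (both are visibly of the form $\mathrm{const}\cdot c$ after one term degenerates). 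The paper's direct approach, by contrast, keeps the dual variables visible and slots seamlessly into the rest of the paper's $\hat{v}$-based computations.
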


In our extended Merton's problem, the risk aversion level from the utility function has been distorted by the relaxed benchmark tracking constraint. Indeed, the allowance of strategic capital injection not only enlarges the set of admissible controls, but also incentivizes the fund manager to be more risk-taking in choosing aggressive portfolio and consumption plans. From Proposition \ref{lem:prop}, one can observe that when the fund manager is very risk averse such that $p\leq p_1$, the risk aversion attitude from the utility function plays the dominant role and hence the optimal consumption functions exhibits concavity as observed in \cite{CK1996}; but when the fund manager is much less risk averse or close to risk neutral, i.e., $p>p_1$, the risk-taking component from the capital injection starts to distort the fund manager's decision making, leading to convex optimal consumption function. In this case, when the wealth increases, the fund manager is willing to inject more capital to achieve the increasing marginal consumption. Some illustrative plots of different convexity results with respect to the variable $x$ are reported in the next section.

\section{Numerical Examples and Financial Implications}\label{sec:num}

In this section, under the CRRA utility and GBM benchmark process in Section~\ref{sec:CRRA}, we present some numerical examples to illustrate some other quantitative properties and financial implications of the optimal feedback control functions and the expectation of the discounted total capital injection. To ease the discussions, we only consider $d=1$ in all examples and focus primarily on the capital injection model formulation.

We first discuss some financial implications of the adjustment impact by capital injection in Remark \ref{compareM} through the next few numerical examples. Let us take the cost parameter $\beta=1$ for simplicity. Firstly, let us consider the case when $\mu_Z=\sigma_Z=0$, i.e., the benchmark is constant $z\in\R_+$.  It follows from \eqref{eq:zgeq0} that, the optimal portfolio-wealth ratio and the consumption-wealth ratio $(\frac{\theta^*(x,z)}{x},\frac{c^*(x,z)}{x})$ admit the following expression that, for $(x,z)\in\R_+^2$,
    \begin{align}\label{eq:strategyzgeq1}
\begin{cases}
\displaystyle \frac{\theta^*(x,z)}{x}=\frac{\mu}{\sigma^2(1-p)}+x^{-1}\frac{\mu}{\sigma^2}\frac{1-p}{\rho(1-p)-\alpha p},\\[0.6em]
\displaystyle \frac{c^*(x,z)}{x}=C^*(\mu,\sigma,p) +x^{-1}.
\end{cases}
\end{align}
We can observe from \eqref{eq:strategyzgeq1} that the portfolio-wealth ratio and consumption-wealth ratio are no longer constant comparing with the classical Merton's solution. Instead, the adjustment impacts by capital injection per wealth are decreasing in wealth and they are independent of the constant benchmark level $z\in\mathbb{R}_+$. We also have from \eqref{eq:strategyzgeq1} the following financial implications:
\begin{itemize}
\item The wealth level has adverse effect on the adjustment impact. When the  wealth level is sufficiently high, the adjustment impact becomes negligible. This can be explained by the fact that, for the constant benchmark $Z_t\equiv z\geq0$ (i.e., $\mu_Z=\sigma_Z=0$), the relaxed tracking constraint can be easily achieved with a higher wealth level without requesting frequent capital injection. In the extreme case, it also holds that $\lim_{x\to\infty}(\frac{\theta^*(x,z)}{x},\frac{c^*(x,z)}{x})=(\frac{\mu}{\sigma^2(1-p)},C^*(\mu,\sigma,p))$, and   ${\rm w}(x,z)={\rm w}^{\rm Mer}(x,z)-{\rm w}^{\rm cost}(x,z)\approx{\rm w}^{\rm Mer}(x,z)$ as $x\rightarrow \infty$, where ${\rm w}^{\rm Mer}(x,z):=\Ex\left[\int_0^{\infty}e^{-\rho t}U(c^*(X_t^*,z))dt\right]$ and ${\rm w}^{\rm cost}(x,z):=\Ex\left[\int_0^{\infty}e^{-\rho t}dA_t^*\right]$ (see Figure \ref{fig:cost-utility}-(a)). In addition, for any constant benchmark $z\in\mathbb{R}_+$, the optimal portfolio and consumption strategies are independent of $z$, leading to the same expected utility of consumption.

\item When the wealth level $x$ is low, a reasonable amount of capital injection is needed to fulfill the tracking constraint with respect to the constant benchmark. The fund manager needs to balance the trade-off between the utility of consumption (${\rm w}^{\rm Mer}(x,z)$) and the cost of capital injection (${\rm w}^{\rm cost}(x,z)$) to obtain an optimal profit (${\rm w}(x,z)$) (see Figure \ref{fig:cost-utility}-(b)). Moreover, the adjustment term raised by the capital injection in \eqref{eq:strategyzgeq1} is positive, indicating that the fund manager behaves more aggressively in both portfolio and consumption plans comparing with the Merton solution under no bankruptcy constraint. This can be explained by the fact that the possible capital injection (or the tolerance of the positive shortfall when the wealth falls below the constant benchmark) can significantly incentivize the fund manager to attain a higher expected utility as long as the cost of capital injection (or the expected largest shortfall) stays relatively small comparing with the increment in expected utility.

\begin{figure}
\centering
 \subfigure[]{
 \includegraphics[width=6cm]{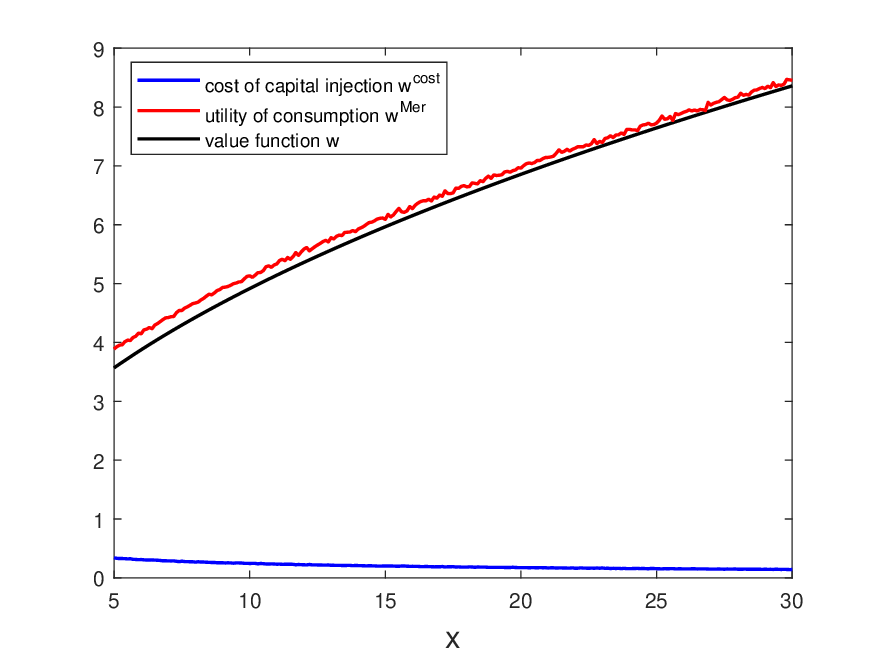}
 }\hspace{-8mm}
 \subfigure[]{
 \includegraphics[width=6cm]{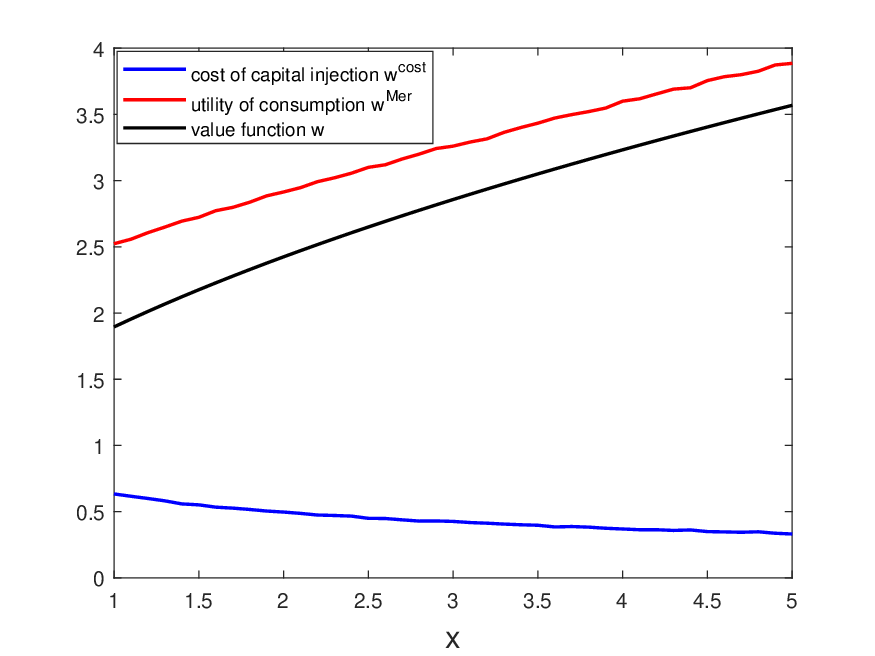}
 }\hspace{-8mm}
 \caption{The optimal value function $x\to {\rm w}(x,z)$, the utility of consumption $x\to {\rm w}^{\rm Mer}(x,z)$ and the cost of capital injection $x\to {\rm w}^{\rm cost}(x,z)$ .  The model parameters are set as $z=1,\rho=1,~\mu=0.5,~\sigma=1,~\mu_Z=0,~\sigma_Z=0,~p=0.5,~\gamma=1,~\beta=1$.}\label{fig:cost-utility}
\end{figure}
\end{itemize}

When the benchmark process $Z=(Z_t)_{t\geq0}$ is a GBM (i.e., $\mu_Z>0$ and $\sigma_Z>0$), the trade-off between the utility maximization and the goal of tracking becomes much more sophisticated, which may considerably rely on the current wealth level, the performance of the benchmark as well as the risk aversion level of the fund manager. In view of the complicated expression of the optimal feedback functions in \eqref{feedbackcontrol}, we are not able to conduct any clean quantitative comparison between our solution and the Merton's solution.

In order to conclude some interpretable financial implications from the optimal feedback controls, we next only consider and discuss the asymptotic case when the wealth level tends to infinity. As stated in Lemma \ref{lem:asymptotic-control}, as the wealth level $x$ goes extremely large, the asymptotic optimal portfolio-wealth ratio and the consumption-wealth ratio $(\frac{\theta^*(x,z)}{x},\frac{c^*(x,z)}{x})$ admit the expressions that
    \begin{align}\label{eq:limthetaliadd}
\begin{cases}
\displaystyle \left(\frac{\mu}{\sigma^2(1-p)},C^*(\mu,\sigma,p)\right),\qquad 1-p<{\rm CRA}(\mu,\sigma,\mu_Z,\sigma_Z),\\
\displaystyle \left(\frac{\mu}{\sigma^2(1-p_1)}+\frac{\sigma_Z}{\sigma}\frac{C^*(\mu,\sigma,p_1)z}{1+C^*(\mu,\sigma,p_1)z},\frac{C^*(\mu,\sigma,p_1)}{1+C^*(\mu,\sigma,p_1)z}\right), \quad 1-p={\rm CRA}(\mu,\sigma,\mu_Z,\sigma_Z),\\
\displaystyle \left(\frac{\mu}{\sigma^2(1-p_1)}+\frac{\sigma_Z}{\sigma},0\right),\qquad\quad 1-p>{\rm CRA}(\mu,\sigma,\mu_Z,\sigma_Z),
\end{cases}
\end{align}
where the critical risk averse (CRA) level ${\rm CRA}(\mu,\sigma,\mu_Z,\sigma_Z)$ is defined by
\begin{align}\label{eq:CRA}
{\rm CRA}(\mu,\sigma,\mu_Z,\sigma_Z):=1-p_1=\frac{1}{1-\kappa}>1
\end{align}
with $\kappa=\frac{\sqrt{(\rho-\eta-\alpha)^2+4\alpha(\rho-\mu_Z)}-(\rho-\eta-\alpha)}{2\alpha}\in(0,1)$, $\alpha=\frac{\mu^2}{2\sigma^2}$, and $\eta=\frac{\sigma_Z}{\sigma}\gamma\mu$.

Next, we examine some quantitative properties of the CRA level ${\rm CRA}(\mu,\sigma,\mu_Z,\sigma_Z)$ w.r.t. the return rate $\mu_Z$ and the volatility $\sigma_Z$ of benchmark process. Note that the mapping $\mu_Z\to \kappa:=\frac{\sqrt{(\rho-\eta-\alpha)^2+4\alpha(\rho-\mu_Z)}-(\rho-\eta-\alpha)}{2\alpha}$ is decreasing. Therefore, the CRA level $\mu_Z\to{\rm CRA}(\mu,\sigma,\mu_Z,\sigma_Z)$ is decreasing for any $\sigma_Z\geq0$ fixed (see Figure \ref{fig:CRA-muZ-sigmaZ}-(a)). In other words, the higher the return rate of benchmark index, the lower the CRA level.  For any $\mu_Z\in\R$ fixed, $\sigma_Z\to {\rm CRA}(\mu,\sigma,\mu_Z,\sigma_Z)$ is non-decreasing (see Figure \ref{fig:CRA-muZ-sigmaZ}-(b)). We next check the extreme case when $\mu_Z=\sigma_Z=0$ (i.e., the constant benchmark case) and find that the resulting CRA level ${\rm CRA}(\mu,\sigma,\mu_Z,\sigma_Z)$ tends to $+\infty$ as $\mu_Z,\sigma_Z\to0$. Indeed, when $\mu_Z=\sigma_Z=0$, we have the parameter $\kappa=1$ as the parameter $\eta=0$. Then, in lieu of the definition of CRA level, it obviously holds that ${\rm CRA}(\mu,\sigma,0,0)=\frac{1}{1-1}=+\infty$. This is consistent with our previous discussion that when the wealth level is sufficiently high, the impact by the capital injection becomes negligible for the constant benchmark. 

\begin{figure}
\centering
  \subfigure[]{
        \includegraphics[width=6cm]{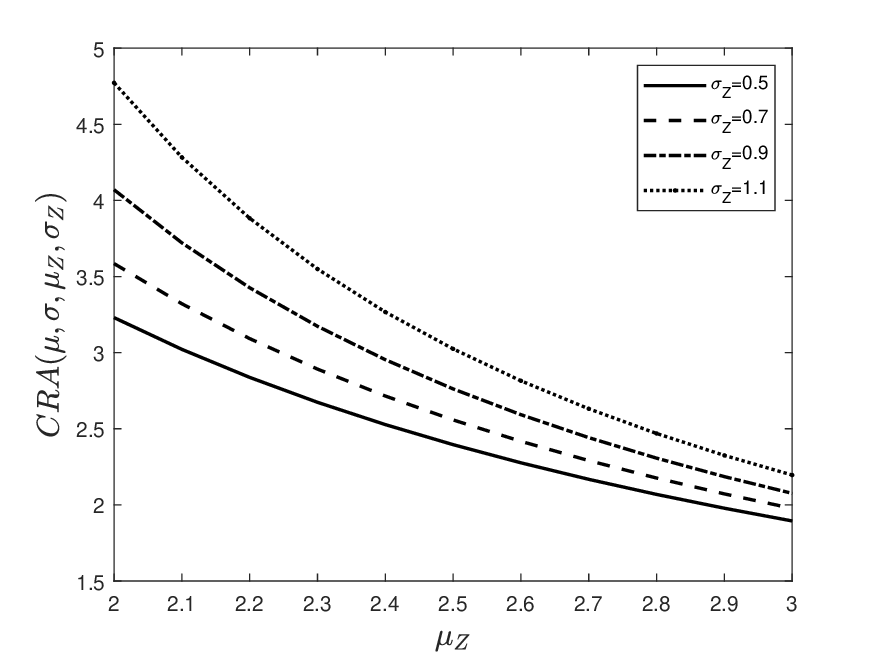}
    }\hspace{-8mm}
  \subfigure[]{
        \includegraphics[width=6cm]{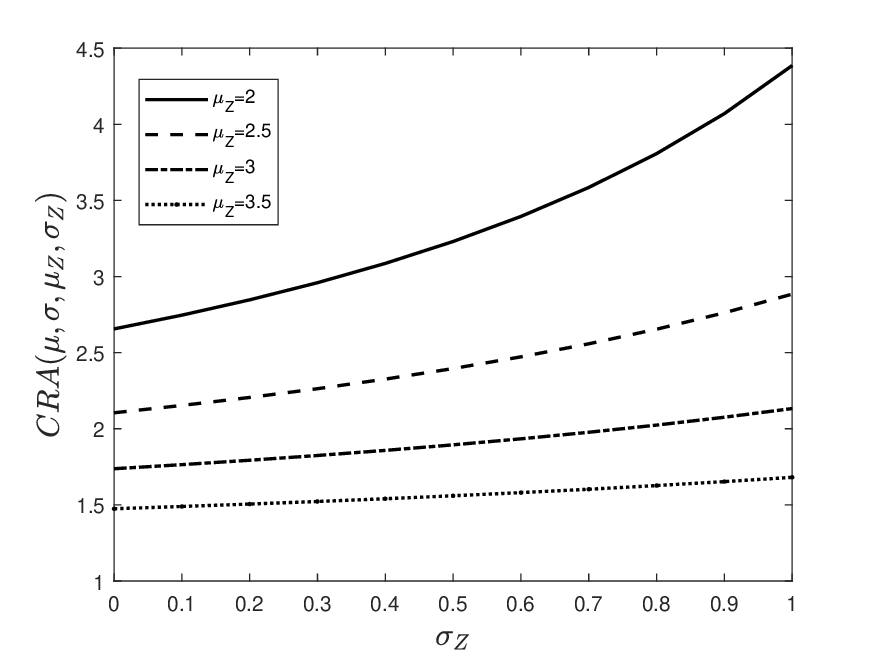}
    }
 \caption{(a): The CRA level $\mu_Z\to \text{CRA}(\mu,\sigma,\mu_Z,\sigma_Z)$. (b):  The CRA level $\sigma_Z\to \text{CRA}(\mu,\sigma,\mu_Z,\sigma_Z)$ . The model parameters are set as $\rho=5,~\mu=1,~\sigma=1,~p=-1,~\gamma=1,~\beta=1$.}\label{fig:CRA-muZ-sigmaZ}
\end{figure}

Moreover, a key observation here is that the asymptotic behavior in \eqref{eq:limthetaliadd} depends purely on the risk aversion parameter and the performance of the benchmark. We now summarize the detailed financial implications as below:
\begin{itemize}
\item[(i)] If the fund manager is less risk averse such that $1-p<{\rm CRA}(\mu,\sigma,\mu_Z,\sigma_Z)$, the fund manager's optimal portfolio-wealth ratio and the consumption-wealth ratio coincide with the classical Merton's limit $\left(\frac{\mu}{\sigma^2(1-p)},C^*(\mu,\sigma,p)\right)$ as $x\rightarrow\infty$. This can be explained by the fact that the fund manager with a low risk aversion, being aware of the extremely large wealth level, will be more aggressive in investing in the risky asset. In turn, the resulting large wealth process from the financial market can stably  outperform the benchmark process most of the time, yielding the capital injection almost negligible. As a consequence, the fund manager's asymptotic consumption plan also behaves like the counterpart in the Merton's solution without benchmark tracking.

\item[(ii)] When the fund manager's risk averse level $1-p$ equals or is higher than the CRA level ${\rm CRA}(\mu,\sigma,\mu_Z,\sigma_Z)$, she will withhold the asymptotic consumption plan comparing with the case of low risk aversion or the Merton's asymptotic consumption. At the same time, it is interesting to see that the fund manager actually chooses a more aggressive asymptotic investment plan comparing with the case of low risk aversion, which is almost counter-intuitive. However, we note that the trade-off becomes more severe for the high risk averse fund manager, who would concern more on the cost of capital injection that drives her to invest more and consume less, as a way to avoid a large amount of capital injection. In this way, the high risk averse fund manager would hope that the resulting wealth process can stay at a high level outperforming the benchmark so that the expected capital injection can be maintained at a low level. As the high risk averse fund manager would put more wealth into the risky asset, the associated optimal consumption-wealth ratio is also restrained and smaller than the counterpart in the Merton's solution.

  \item[(iii)]  For a fixed return rate of benchmark index, the volatility $\sigma_Z$ of benchmark index has a significant impact on the optimal asymptotic portfolio-wealth ratio when the fund manager has higher risk aversion than the CRA level ${\rm CRA}(\mu,\sigma,\mu_Z,\sigma_Z)$. The higher the volatility $\sigma_Z$ of benchmark process, the more the fund manager invests in the risky asset (see Figure \ref{fig:optimal-theta-sigmaZ}), hoping the wealth process from the financial market can outperform the more fluctuating benchmark. On the other hand, we note that the higher the return rate $\mu_z$ of the benchmark, the smaller the CRA level, which yields that the high risk averse fund manager will be more likely to increase her investment in the risky asset, again hoping the gain from the risky asset to beat the high benchmark return.

  \item[(iv)] When the benchmark process is deterministic (i.e., $\sigma_Z=0$), the optimal asymptotic portfolio-wealth ratio and consumption-wealth ratio $(\frac{\theta^*(x,z)}{x},\frac{c^*(x,z)}{x})$ are decreasing in terms of the risk averse parameter $1-p$. In fact, when $\sigma_Z=0$, the limit of $(\frac{\theta^*(x,z)}{x},\frac{c^*(x,z)}{x})$ admit the simplified expressions that
      \begin{align}\label{eq:limthetalisigmaZ=0add}
\begin{cases}
\displaystyle \left(\frac{\mu}{\sigma^2(1-p)},C^*(\mu,\sigma,p)\right), &1-p<{\rm CRA}(\mu,\sigma,\mu_Z,0),\\[0.8em]
\displaystyle \left(\frac{\mu}{\sigma^2(1-p_1)},\frac{C^*(\mu,\sigma,p_1)}{1+C^*(\mu,\sigma,p_1)z}\right),&1-p={\rm CRA}(\mu,\sigma,\mu_Z,0),\\[0.8em]
\displaystyle \left(\frac{\mu}{\sigma^2(1-p_1)},0\right),&1-p>{\rm CRA}(\mu,\sigma,\mu_Z,0),
\end{cases}
\end{align}
where we recall that $1-p_1={\rm CRA}(\mu,\sigma,\mu_Z,0)$. The left panel (a) (resp. the right panel (b)) of Figure \ref{fig:optimal-p-new} displays the optimal portfolio-wealth ratio (resp. consumption-wealth ratio) w.r.t. the risk averse parameter $1-p$ for a fixed large initial wealth level $x$ under the different return rates of benchmark $\mu_Z=0.3$, $0.4$ and $0.8$.
\begin{figure}[h]
\centering
 \includegraphics[width=6cm]{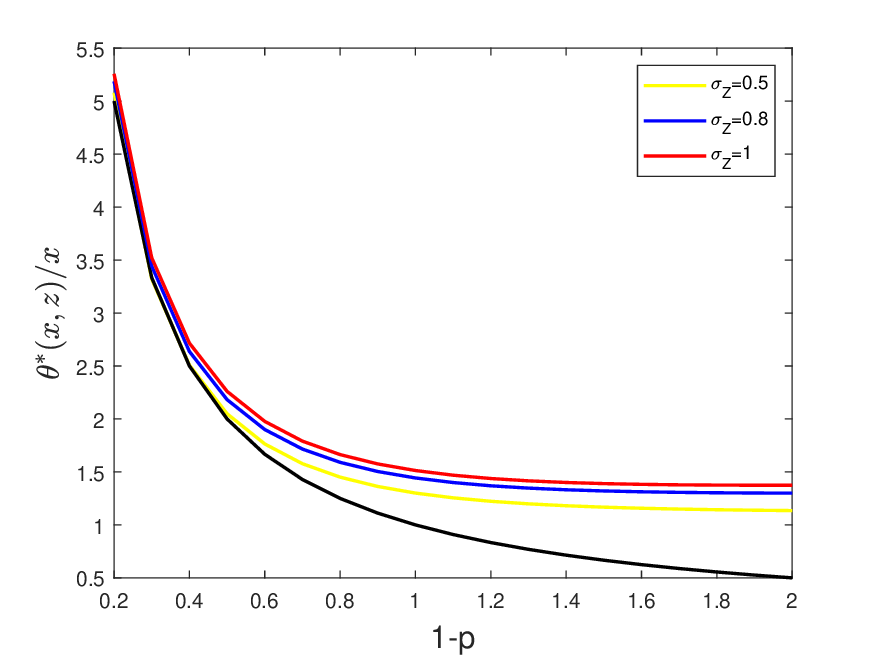}
 \caption{The sensitivity of the asymptotic optimal portfolio-wealth ratio w.r.t. $1-p$. The model parameters are set as $(x,z)=(5,1),~\rho=3,~\mu=1,~\sigma=1,~\mu_Z=2,~\sigma_Z=0.5,~\gamma=1$.}\label{fig:optimal-theta-sigmaZ}
\end{figure}
\begin{figure}[h]
\centering
  \subfigure[]{
        \includegraphics[width=6cm]{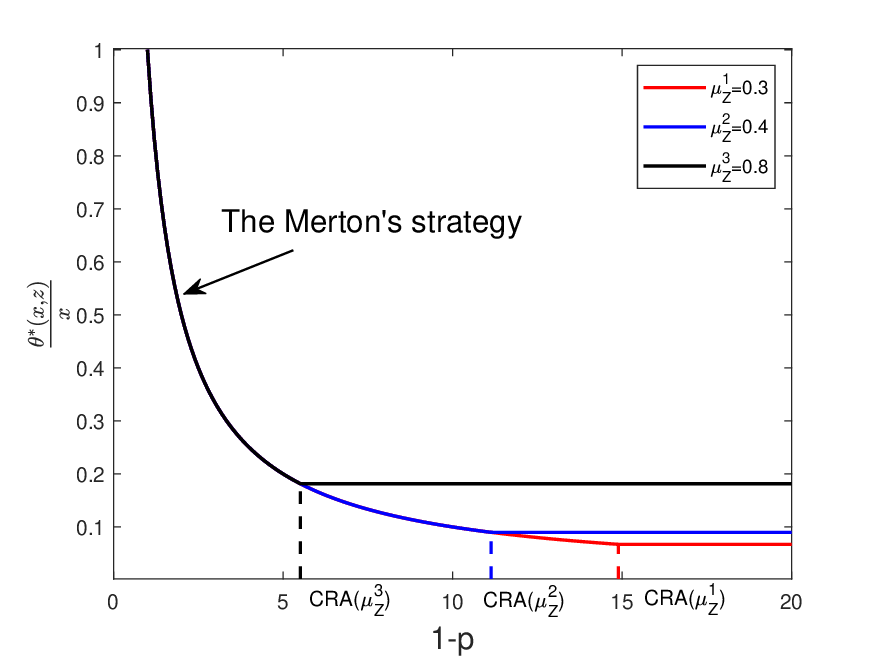}
    }\hspace{-8mm}
  \subfigure[]{
        \includegraphics[width=6cm]{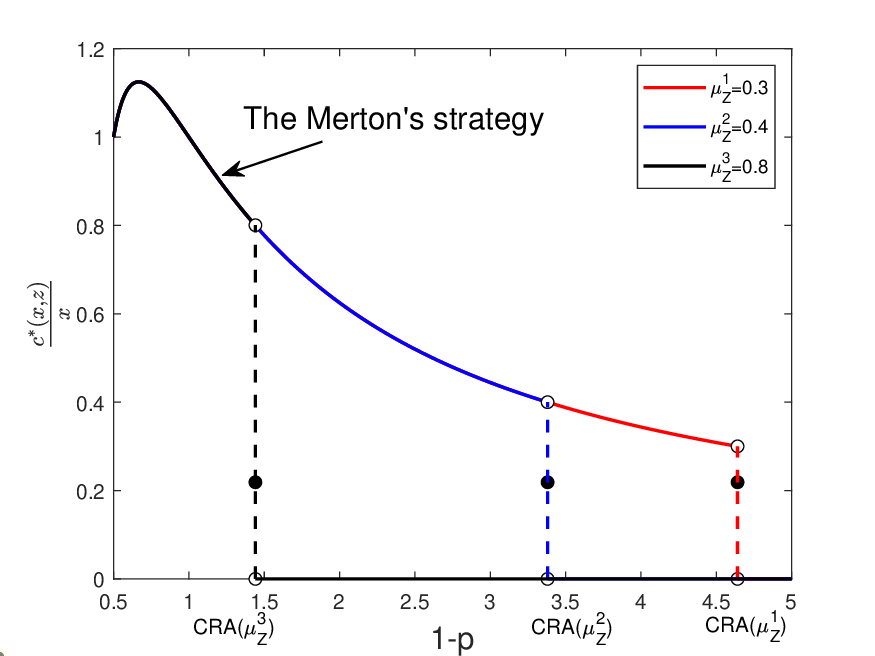}
    }
 \caption{(a): The sensitivity of the asymptotic optimal portfolio-wealth ratio w.r.t. $1-p$. (b): The sensitivity of the asymptotic optimal consumption-wealth ratio w.r.t. $1-p$. The model parameters are set as $(x,z)=(20,1),~\rho=5,~\mu=1,~\sigma=1,~\sigma_Z=0,~\gamma=1$.}\label{fig:optimal-p-new}
\end{figure}
It is observed that, for each fixed return rate $\mu_Z$ of the benchmark, the optimal portfolio-wealth ratio is continuously decreasing w.r.t. $1-p$; while the optimal consumption-wealth ratio is strictly decreasing (it jumps down at the CRA level ${\rm CRA}(\mu,\sigma,\mu_Z,0)$). This observation is similar to that in the Merton's case: the fund manager invests less in the risky asset and consumes less if she is more risk averse. Moreover, the fund manager will implement a Merton's portfolio strategy {\it locked at} the CRA level ${\rm CRA}(\mu,\sigma,\mu_Z,0)$ once she is more risk averse than the CRA level. However, when the fund manager is less risk averse than the CRA level, she will execute the classical Merton's strategy depending on her current risk averse level $1-p$. Note that the CRA level $\mu_Z\to{\rm CRA}(\mu,\sigma,\mu_Z,0)$ is decreasing. Therefore, the higher the return rate of the benchmark process, the lower the CRA level. This implies that, if the return rate of the benchmark process is very high, it is more likely that the high risk averse fund manage would follow the asymptotic behavior $(\frac{\mu}{\sigma^2(1-p_1)},0)$ in the Merton's solution with the locked risk aversion level $1-p_1$ (regardless of the true risk aversion level $1-p$ from the fund manager's utility function) and the asymptotic portfolio-wealth ratio also becomes larger as $1-p_1={\rm CRA}(\mu,\sigma,\mu_Z,0)$ is smaller.
\end{itemize}

In what follows, to numerically illustrate the convexity or concavity of the optimal feedback portfolio and consumption functions under different risk aversion in Proposition \ref{lem:prop}, we plot different cases in Figures \ref{fig:theta} and \ref{fig:c} respectively. More precisely, Figure \ref{fig:theta} displays the optimal portfolio w.r.t. the wealth level $x$ under different choices of $p$ in which the return rate of the risky asset is set to be low (resp. high) when $\mu=0.1$ (resp. $\mu=1$) in the left panel (a) (resp. the right panel (b)) of Figure \ref{fig:theta}. In both panels of Figure \ref{fig:theta}, the optimal portfolio is increasing in wealth for all risk aversion parameters $1-p$ chosen, similar to the classical Merton solution. However, as a contrast, due to the capital injection and the goal of benchmark tracking, our optimal feedback portfolio $\theta^*(x,1)$ is only linear in wealth at two critical risk aversion parameters $1-p_1$ and $1-p_2$ ($p_2>p_1$). In particular, when the risk aversion parameter $1-p$ falls between $1-p_2$ and $1-p_1$, our optimal  portfolio feedback function is concave in $x$ (see the left panel (a) of Figure \ref{fig:theta}); while for the risk aversion parameter $1-p$ falls outside of the risk aversion interval $[1-p_2,1-p_1]$, the optimal portfolio feedback function is strictly convex in $x$ (see the left panel (b) of Figure \ref{fig:theta}). This is precisely reflected in the claim (iv) of Proposition \ref{lem:prop}.

\begin{figure}[ht]
\centering
  \subfigure[]{
        \includegraphics[width=6cm]{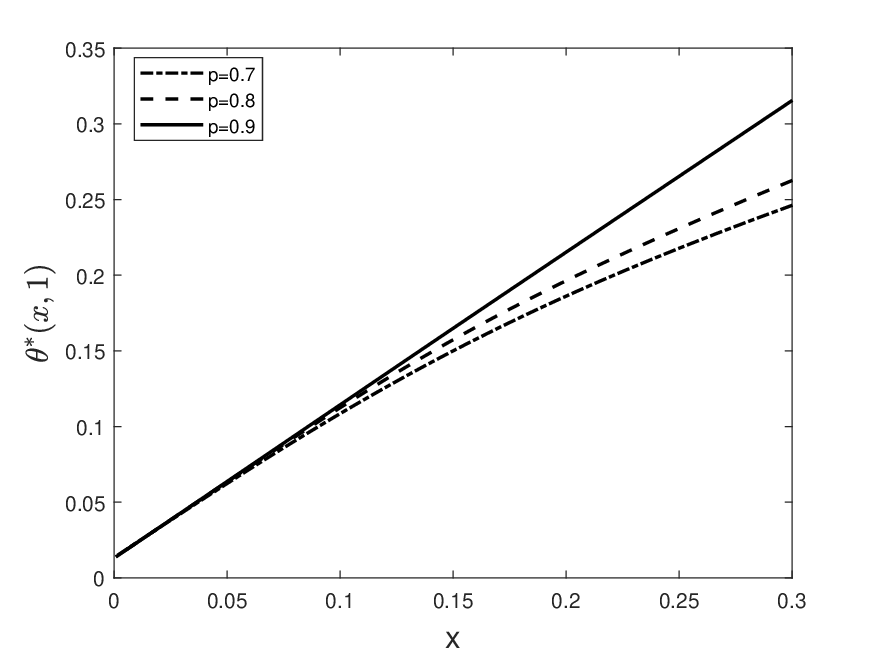}
    }\hspace{-8mm}
  \subfigure[]{
        \includegraphics[width=6cm]{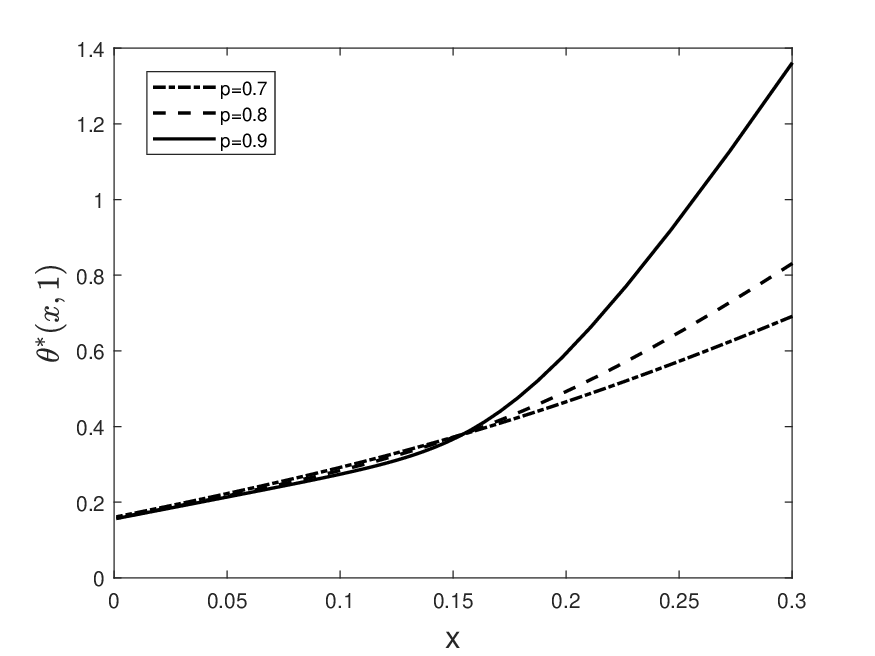}
    }
 \caption{The optimal portfolio $x\to \theta^*(x,1)$.  The model parameters are set as  $\rho=7,~\sigma=1,~\mu_Z=1,~\sigma_Z=1,~\gamma=1,~\beta=3$ and  $\mu=0.1$ in panel (a), $\mu=1$ in panel (b).}\label{fig:theta}
\end{figure}

\begin{figure}[ht]
\centering
  \subfigure[]{
        \includegraphics[width=6cm]{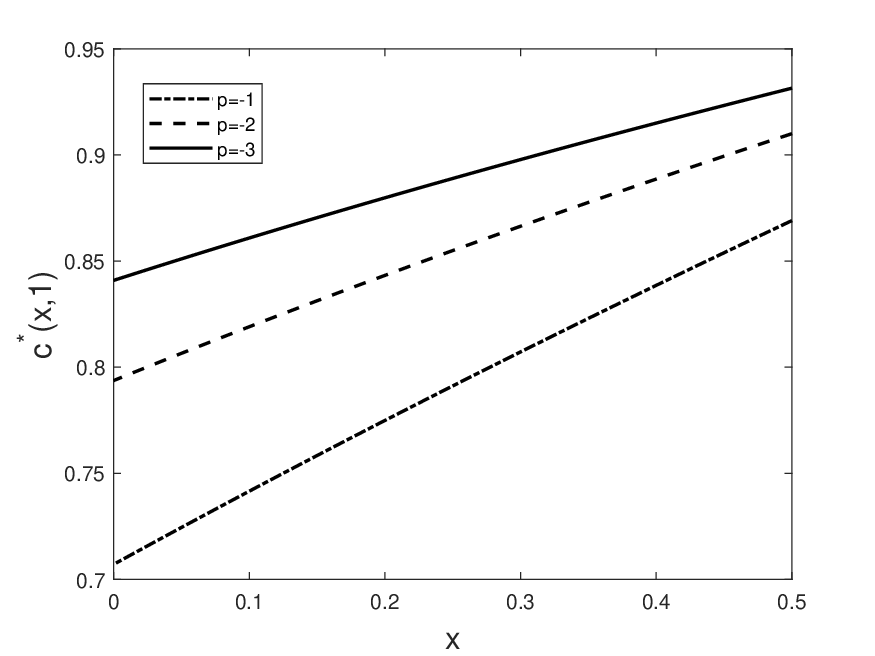}
    }\hspace{-8mm}
  \subfigure[]{
        \includegraphics[width=6cm]{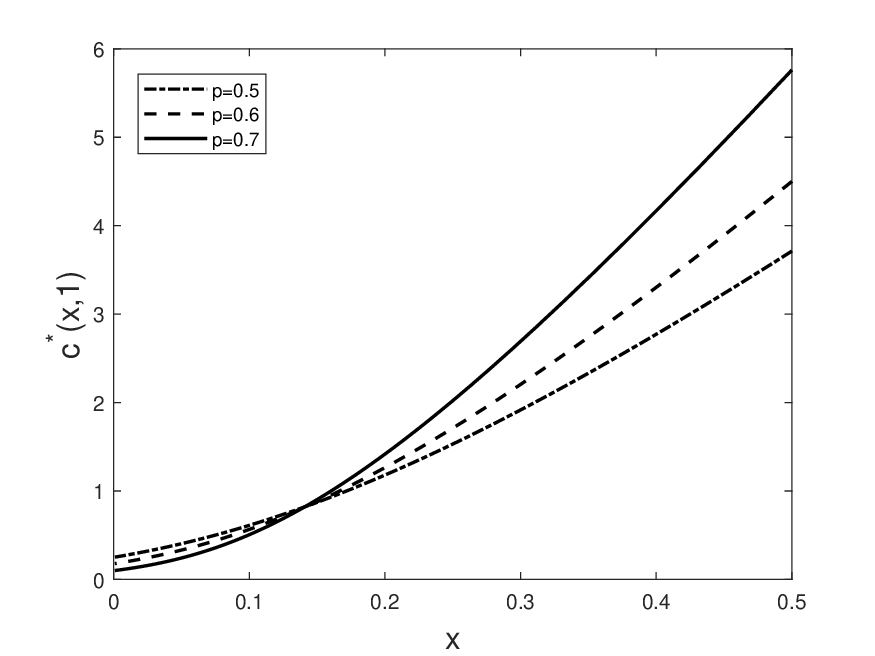}
    }
 \caption{The optimal consumption $x\to c^*(x,1)$. The model parameters are set as  $\rho=7,~\mu=1,~\sigma=1,~\sigma_Z=1,~\gamma=1,~\beta=2$ and  $\mu_Z=6.5$ in panel (a), $\mu_Z=2$ in panel (b).}\label{fig:c}
\end{figure}

Figure \ref{fig:c} presents the optimal consumption w.r.t. the wealth level $x$ under different risk aversion parameter $1-p$ in which the return rate of benchmark process is set to be high (resp. low) when $\mu_Z=6.5$ (resp. $\mu_Z=2$) in the left panel (a) (resp. the right panel (b)). The monotonicity of the optimal feedback consumption with respect to wealth is the same as in the Merton's solution. However, our optimal consumption is only linear in wealth when the risk averse level of the fund manager equals the CRA level ${\rm CRA}(\mu,\sigma,\mu_Z,\sigma_Z)$. When the fund manager has a risk aversion level $1-p>\text{CRA}$, her optimal consumption feedback function is strictly concave in $x$ (see the panel (a) of Figure \ref{fig:c}); while when the fund manager has a risk aversion level $1-p<\text{CRA}$, her optimal consumption feedback function is strictly convex in $x$ (see the panel (b) of Figure \ref{fig:c}). These observations are consistent with the theoretical findings in the claim (ii) of Proposition \ref{lem:prop}.

We also discuss the financial implications for consumption behavior from the perspective of the expected largest shortfall. To examine the relationship between the concavity (or convexity) of the feedback consumption function and the resulting volatility of consumption and expected largest shortfall, we plot these values in Figure \ref{fig:variance-consumption} for different risk aversion parameter $1-p$. In our case, $p_1 = -3.386$. According to Proposition \ref{lem:prop}-(ii), the optimal consumption feedback function is strictly concave in wealth $x$ for $p = -5$ and $p = -4$, but strictly convex for $p = -2$ and $p = -1$. As shown in Figure \ref{fig:variance-consumption}, both the variance of optimal consumption and the expected largest shortfall are smaller in the case of concave feedback consumption functions than those in the case of convex feedback consumption functions. This is consistent with the intuition that a higher risk-averse agent opts for a consumption strategy with lower volatility, resulting in a smaller expected value of the optimal capital injection (expected largest shortfall). Consequently, the expected utility of consumption plays the dominating role in the objective function, which leads to a concave feedback consumption function in the auxiliary state variable $x$.

\begin{figure}[ht]
\centering
  \subfigure[]{
        \includegraphics[width=6cm]{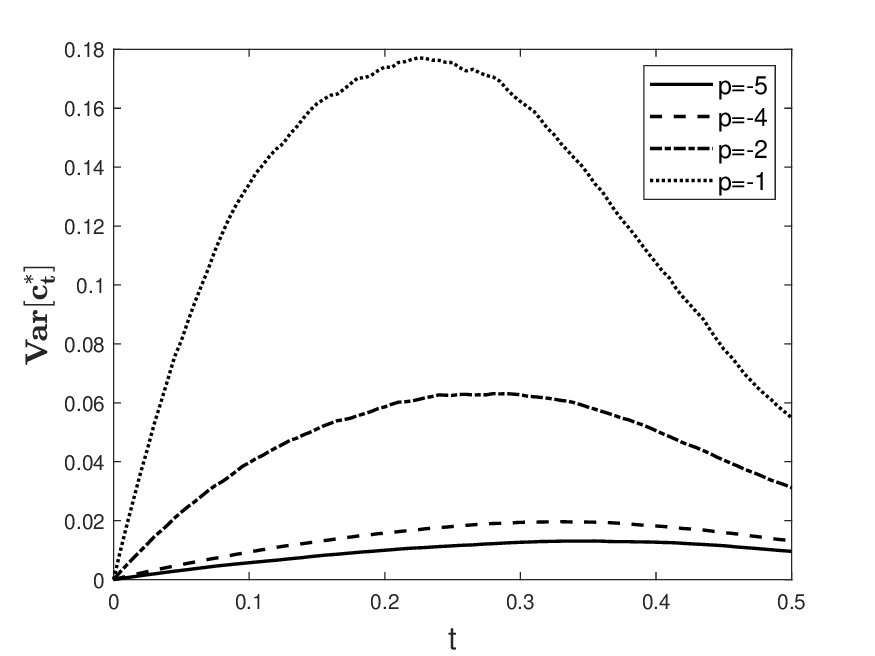}
    }\hspace{-8mm}
  \subfigure[]{
        \includegraphics[width=6cm]{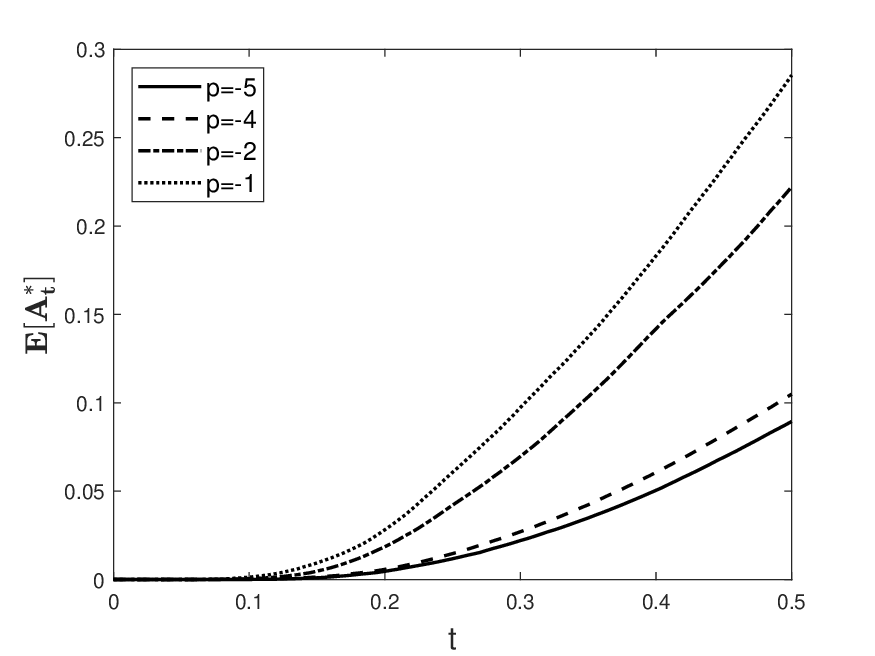}
    }
 \caption{(a): The variance value of optimal consumption. (b):  The expected largest shortfall. The model parameters are set as  $(x,z)=(1,0.5),\rho=5,~\mu=1,~\sigma=1,~\mu_Z=2,~\sigma_Z=1,~\gamma=1,~\beta=1$.}\label{fig:variance-consumption}
\end{figure}

We next conduct some additional numerical examples on sensitivity analysis of the optimal feedback portfolio and consumption w.r.t. the return parameter $\mu_Z$ of the benchmark process and the cost parameter of capital injection $\beta$.
 We illustrate in Figures \ref{fig:muZ} the sensitivity of the optimal feedback portfolio and consumption as well as the expectation of the discounted capital injection with respect to the return parameter $\mu_Z$ in the benchmark dynamics. As expected, the expectation of the discounted capital injection is a decreasing function of the wealth variable $x$. More importantly, being consistent with the intuition, it is shown in Figure \ref{fig:muZ} that when the benchmark process has a higher return, the fund manager will invest more in the risky asset and inject more capital to outperform the targeted benchmark, and meanwhile will strategically reduce the consumption amount due to the pressure of fulfilling the benchmark floor constraint.
\begin{figure}[h]
\centering
  \subfigure[]{
        \includegraphics[width=5.5cm]{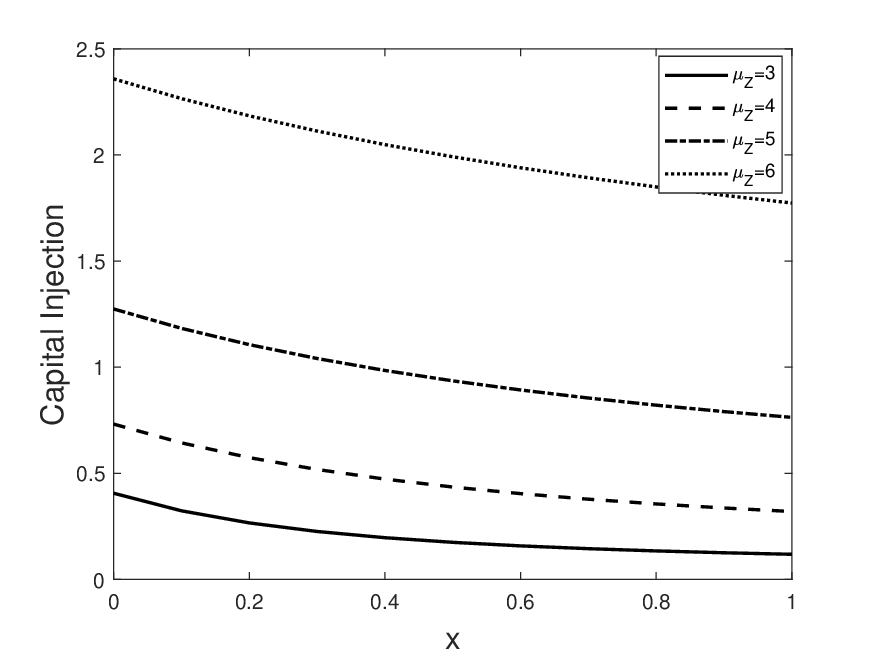}
        \label{fig:value-muZ}
    }\hspace{-8mm}
  \subfigure[]{
        \includegraphics[width=5.5cm]{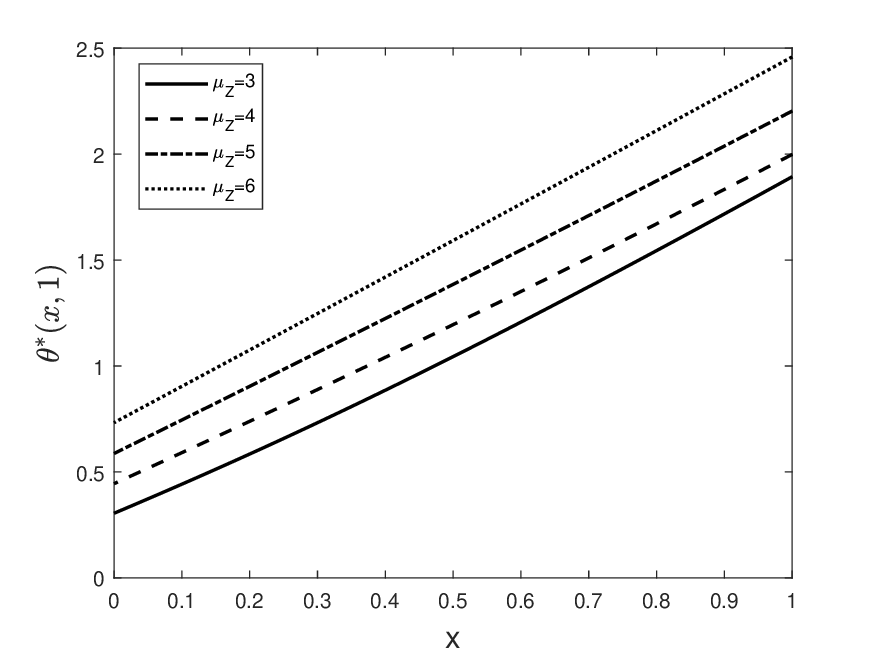}
        \label{fig:theta-muZ}
    }\hspace{-8mm}
  \subfigure[]{
        \includegraphics[width=5.5cm]{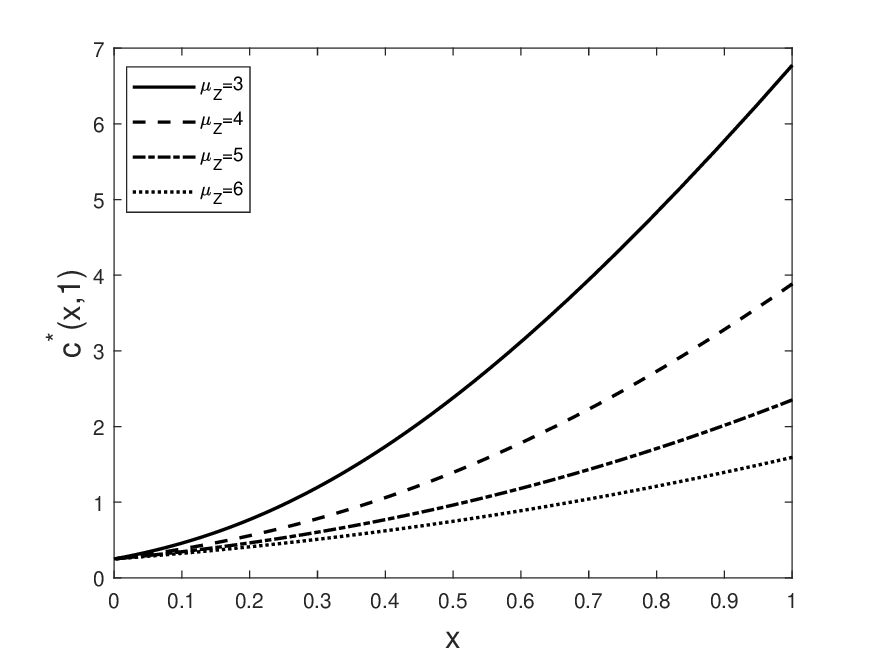}
        \label{fig:c-muZ}
    }
 \caption{(a): The expectation of the total optimal discounted capital injection.  (b): The optimal portfolio $x\to \theta^*(x,1)$. (c): The optimal consumption $x\to c^*(x,1)$. The model  parameters are set as $z=1,~\rho=8,~\mu=1,~\sigma=1,~\sigma_Z=1,\gamma=1, ~p=0.5,~\beta=2$.}\label{fig:muZ}
\end{figure}

 We then plot in Figure \ref{fig:beta} the sensitivity of the optimal feedback portfolio and consumption and the expectation of the capital injection with respect to the cost parameter $\beta$. As $\beta$ increases, the fund manager is more hindered to inject capital as shown in the panel (a) of  Figure \ref{fig:beta}, and hence will strategically suppress the consumption plan to fulfill the benchmark constraint. Meanwhile, from panel (b) of Figure \ref{fig:beta}, it is observed that the fund manager will also reduce the investment in the risky asset, which can be explained by the reduced volatility of the controlled wealth process that may help to avoid unnecessary capital injection in the tracking of the benchmark.

\begin{figure}[h]
\centering
  \subfigure[]{
        \includegraphics[width=5.5cm]{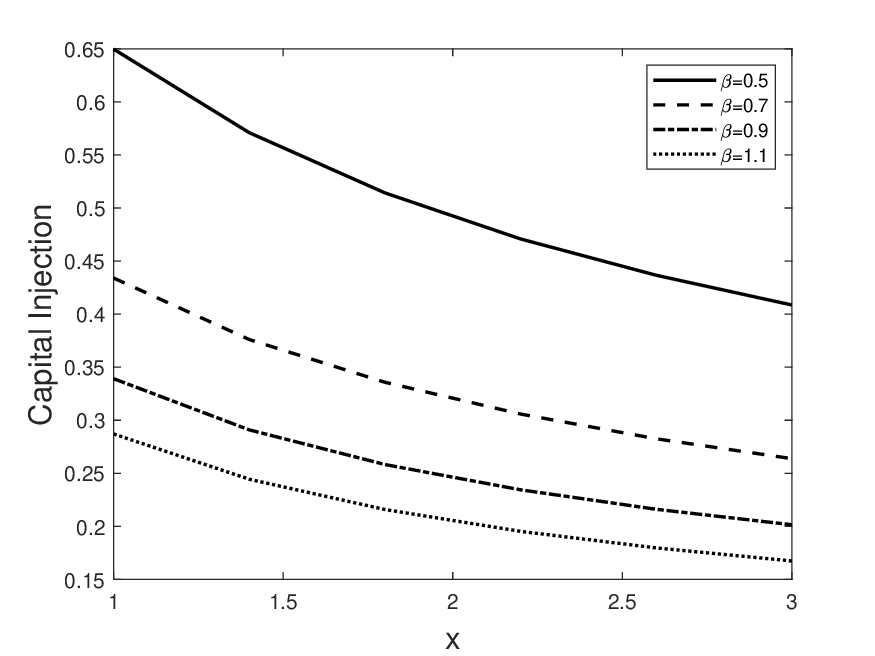}
    }\hspace{-8mm}
  \subfigure[]{
        \includegraphics[width=5.5cm]{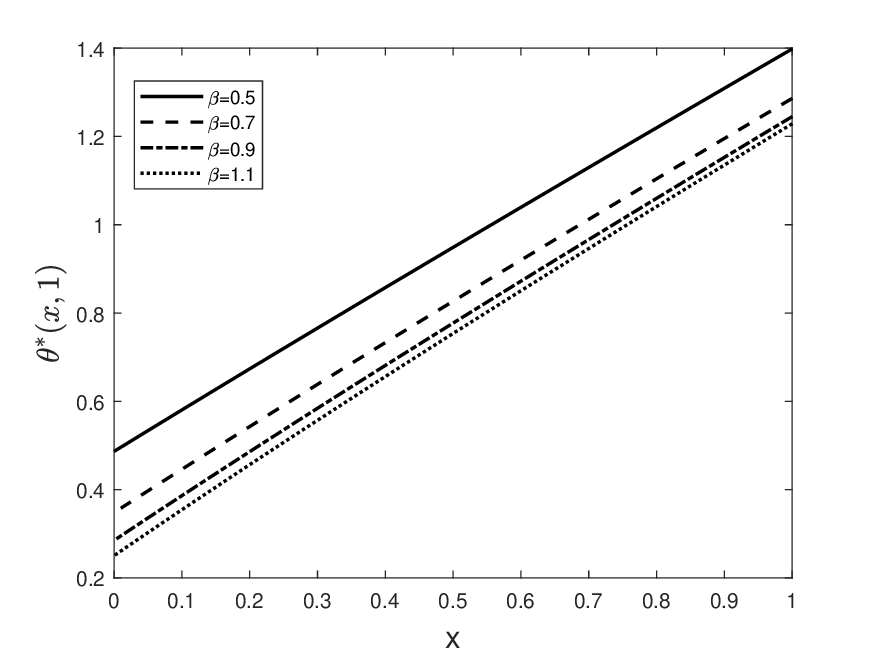}
    }\hspace{-8mm}
  \subfigure[]{
        \includegraphics[width=5.5cm]{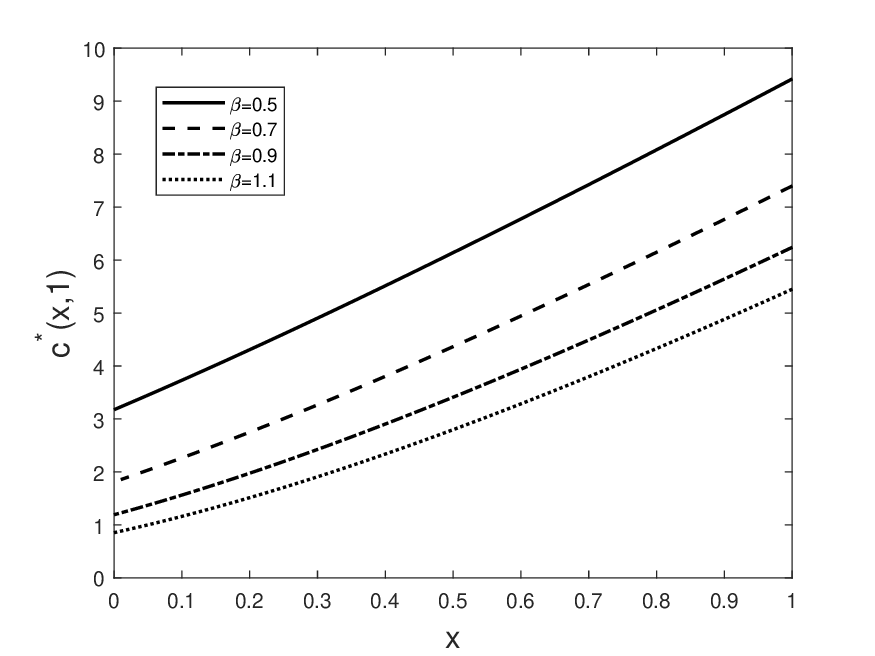}
    }
 \caption{\small (a):  The expectation of the total optimal discounted capital injection.  (b): The optimal portfolio $x\to \theta^*(x,1)$. (c): The optimal consumption $x\to c^*(x,1)$. The model parameters are set as  $z=1,~\rho=5,~\mu=0.5,~\sigma=1,~\mu_Z=2,~\sigma_Z=1,~\gamma=1,~p=0.4$.}\label{fig:beta}
\end{figure}

Next, with zero benchmark process ($Z_t \equiv 0$), we present simulated sample paths in Figure \ref{fig:simulation} to illustrate five key processes: the optimal wealth $V_t^*$, the optimal capital injection $A_t^*$, the auxiliary state process $X_t^*$, and the optimal portfolio-consumption strategies $(\theta_t^*, c_t^*)$ for $t\in[0,1]$. Two important observations can be drawn: 
\begin{itemize}
    \item[(i)] We see from Figure \ref{fig:simulation}  that the wealth process $V^*$ coincides exactly with the auxiliary state process $X^*$ up to $t_0$,  the first hitting time of $V^*$ at zero. During this phase ($t\leq t_0$),  no capital injection is needed, and both processes exhibit identical co-movement with the optimal portfolio and consumption strategies. However, after $V^*$ reaches the zero threshold, the trajectories of $(\theta^*, c^*)$ start to follow the pattern of $X^*$ instead of $V^*$ (see the right panel (b) of Figure \ref{fig:simulation}). This phenomenon again reflects the fact that our optimal controls can only be expressed as the feedback form in terms of the auxiliary state process $X_t$ but not in terms of the original wealth process $V_t$. Indeed, we have the relationship (when benchmark is zero) that
\begin{align*}
X_t = x + (V_t^{\theta,c} - \mathrm{v}) + \sup_{s\leq t}\left(-x - (V_s^{\theta,c} - \mathrm{v})\right)^+,
\end{align*}
which shows that the optimal control $(\theta^*,c^*)$
in Corollary \ref{coro:optimal-control} actually has the path-dependent structure in terms of the wealth process $V_t$ that will make the decision making intractable based on the direct study of the control problem using the original wealth process. This justifies the main advantage of working with the auxiliary state process $X_t$ in the present paper, which significantly simplifies the problem and enables us to numerically illustrate some quantitative properties of the optimal control $(\theta^*,c^*)$ in feedback form.

\item[(ii)] Figure \ref{fig:simulation}-(b)  reveals that the optimal portfolio $\theta^*_t$ and consumption $c^*_t$ naturally satisfies the positive constant subsistence level in the life-cycle. The subsistence level occurs whenever the auxiliary state process $X_t^*$ reaches zero. This observation aligns with our theoretical analysis in Remark \ref{rem0}, which characterizes control behavior when the auxiliary state level $X^*=0$. Specifically, with zero benchmark  ($Z_t \equiv 0$), it follows from \eqref{eq:optimal-x0} that
\begin{align}\label{eq:optimal-x0-z0}
 \theta^*(0,0) = \frac{\mu}{\sigma^2} \frac{1-p}{\rho(1-p)-\alpha p}\beta^{\frac{1}{p-1}} > 0,\quad c^*(0,0) = \beta^{\frac{1}{p-1}} > 0.
\end{align}

\end{itemize}
The observation-(ii) essentially relates to the literature on subsistent portfolio constraint or consumption constraint, which has been documented in many empirical and economic studies. \cite{Behr13} demonstrates that minimum-variance portfolios with explicit allocation floors yield significantly lower out-of-sample volatility compared to conventional unconstrained strategies. The concept of constant consumption floors, commonly termed subsistence consumption, has been rigorously analyzed in development economics. \cite{Chatterjee1999} develops a theoretical framework quantifying the impact of minimum consumption requirements on wealth distribution dynamics and economic growth rates, calibrated using household-level data from rural Indian communities. Their results show that the effect of minimum consumption requirement may be quantitatively important. \cite{Alvarez-Pelaez05} investigates wealth inequality propagation mechanisms in a calibrated one-sector growth model where household consumption cannot fall below a positive level each period. This model is calibrated to match some key aggregate statistics of the U.S. economy. Other relevant studies can be found in \cite{Zimmerman03}, \cite{Jensen08}, \cite{Achury12} among others.

Notably, most theoretical studies along this direction enforce the subsistence constraint on the admissible control to achieve the portfolio minimum floor (see \citealt{Shirakawa1994}; \citealt{Best2000}) or the subsistent consumption behavior (see \citealt{Sethi1992};  \citealt{Shin2011}; \citealt{Kim18}). Accordingly, the initial wealth therein needs to stay above some threshold to ensure that the problem is well-defined. Our model differs fundamentally from them -- the positive lower bounds on optimal portfolio and consumption naturally arise from the tracking formulation by allowing capital injection. Our formulation allows all levels of wealth in fund management to attain the subsistent constant consumption level, similar to the previous studies. Moreover, Equation \eqref{eq:optimal-x0-z0} also explicitly captures the minimum positive level $(\theta^*(0,0),c^*(0,0))$ in terms of the cost parameter $\beta$, thereby making the calibration of the cost parameter $\beta$ possible by observing the real-life data of the subsistent consumption behavior from the fund manager.

\begin{figure}[ht]
\centering
  \subfigure[]{
        \includegraphics[width=6cm]{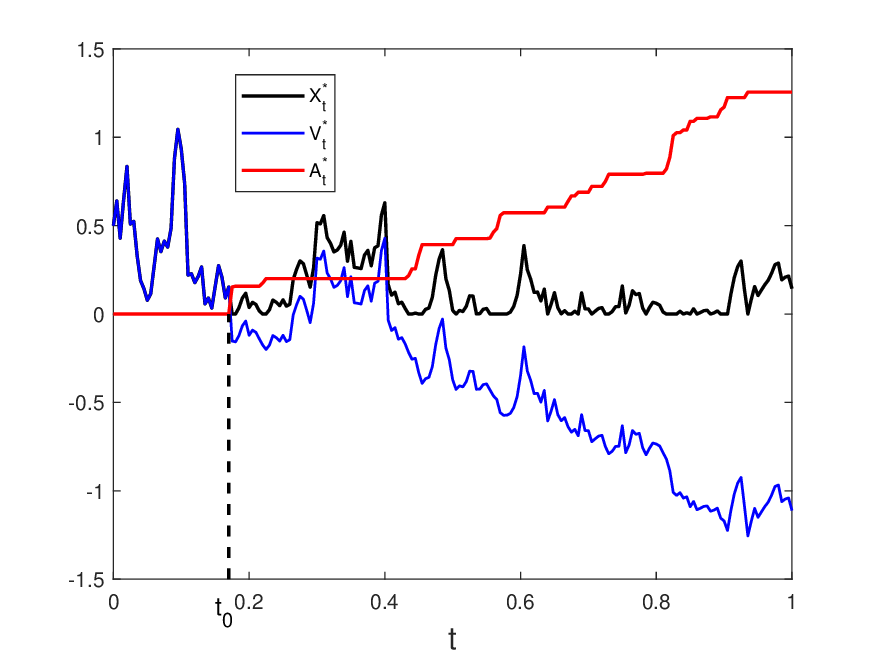}
    }\hspace{-8mm}
  \subfigure[]{
        \includegraphics[width=6cm]{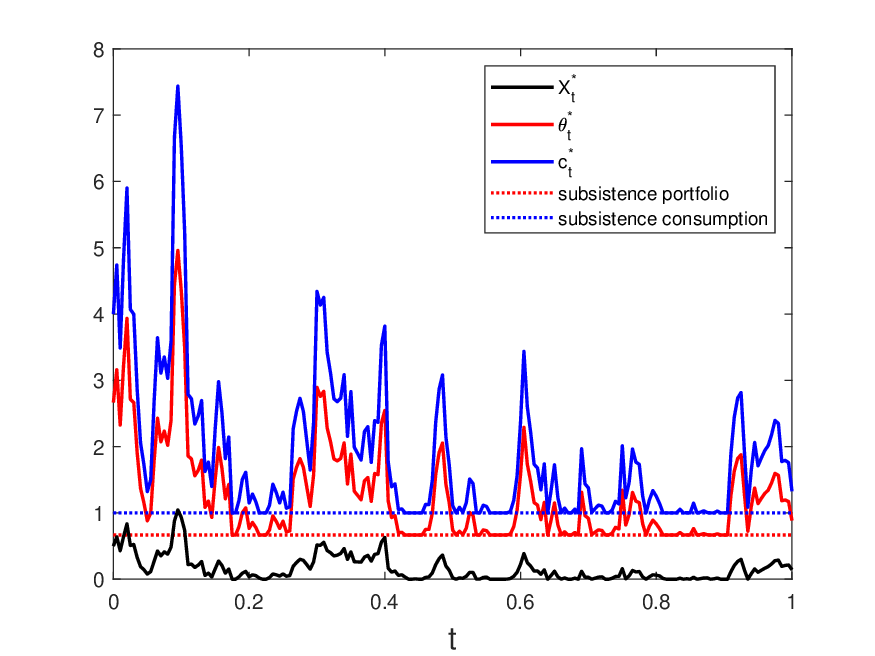}
    }
 \caption{\small (a): Sample paths of $t\to V_t^*$, $t\to X_t^*$ and $t\to A_t^*$ via Monte Carlo simulation. (b) Sample paths of $t\to \theta_t^*$ and $t\to c_t^*$  via Monte Carlo simulation.  The model parameters are set as $(x,z)=(0.5,0)$,~ $\rho=5,~\mu=2,~\sigma=1,~\mu_Z=0,~\sigma_Z=0,~\gamma=1,~\beta=1,~p=0.5$.}\label{fig:simulation}
\end{figure}

Figure \ref{fig:simulation-Merton} presents a comparative analysis of mean and  variance of  optimal wealth processes as well as the mean of optimal portfolio and consumption processes between our problem and the Merton problem.  Evidently, the injected capital supports the more aggressive portfolio and consumption behavior. Panels (c) and (d) of Figure \ref{fig:simulation-Merton} show that the expected optimal portfolio and consumption processes in our problem are obviously higher than the ones in the Merton problem. On the other hand, the mean of wealth process $\Ex[V^*]$ in our problem may become negative and under-performs $\Ex[V^{\text{Mer}}]$ across all horizon, as shown in panel (a) of Figure \ref{fig:simulation-Merton}. Meanwhile, the variance of wealth process $\text{Var}[V^*]$ in our problem also consistently exceeds $\text{Var}[V^{\text{Mer}}]$, as illustrated in panel (b) of Figure \ref{fig:simulation-Merton}.

\begin{figure}[ht]
\centering
  \subfigure[]{
        \includegraphics[width=5.5cm]{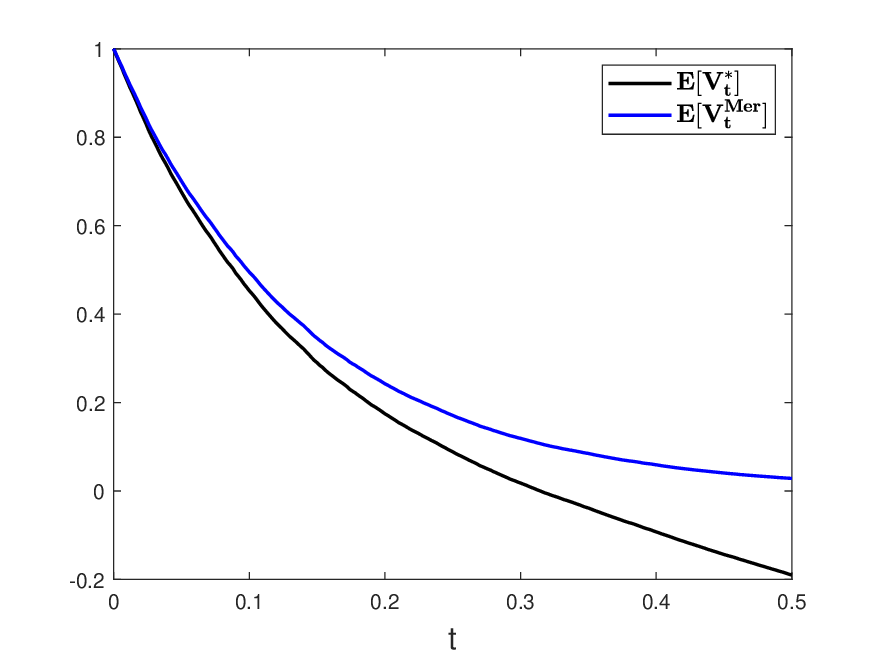}
    }\hspace{-8mm}
  \subfigure[]{
        \includegraphics[width=5.5cm]{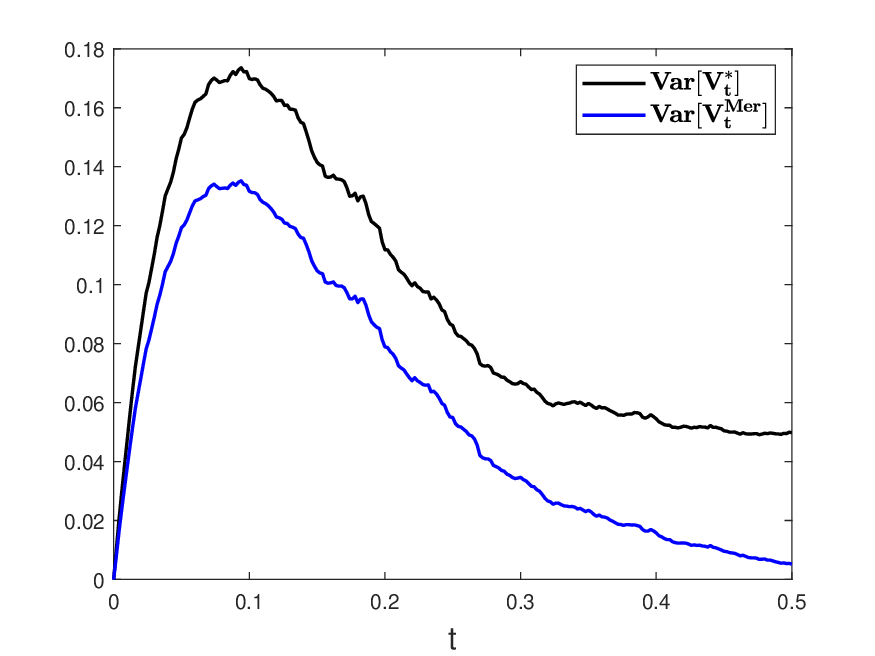}
    }
    
      \subfigure[]{
        \includegraphics[width=5.5cm]{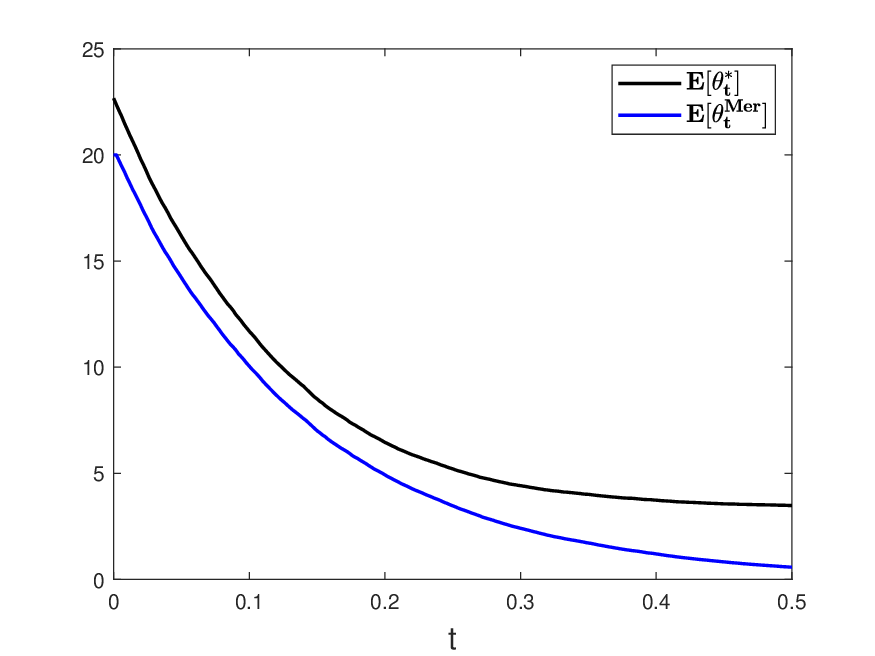}
    }\hspace{-8mm}
  \subfigure[]{
        \includegraphics[width=5.5cm]{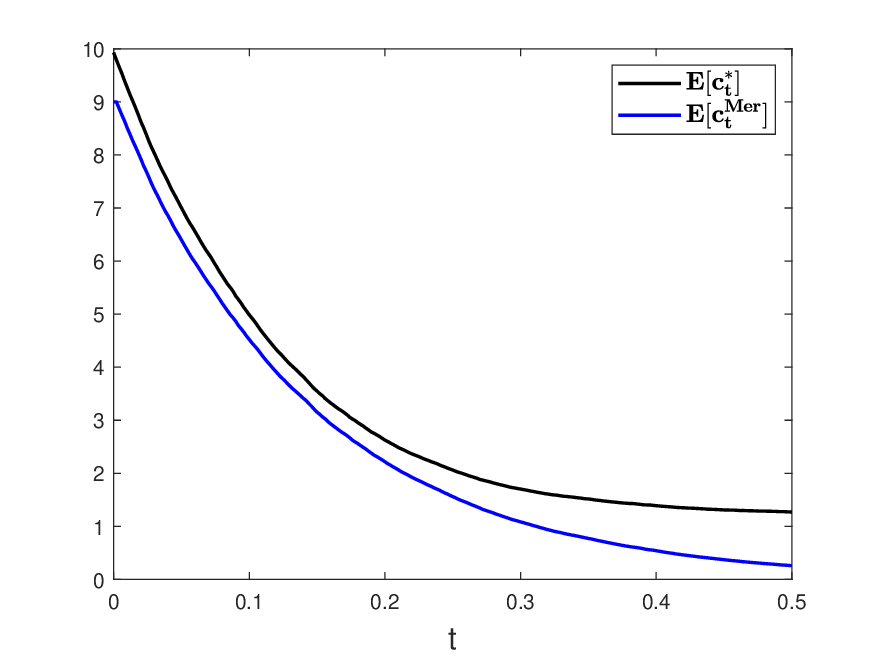}
    }
 \caption{\small (a): The mean value of optimal wealth process under our optimal tracking problem and the Merton problem . (b) The variance value of optimal wealth process under our optimal tracking problem and the Merton problem.  (c): The mean value of optimal portfolio process under our optimal tracking problem and the Merton problem . (d) The mean value of optimal consumption process under our optimal tracking problem and the Merton problem . The model parameters are set as $(x,z)=(1,0.5)$,~ $\rho=5,~\mu=0.1,~\sigma=0.1,~\mu_Z=0.2,~\sigma_Z=0.1,~\gamma=1,~\beta=1,~p=0.5$.}\label{fig:simulation-Merton}
\end{figure}

Next, we mainly discuss the impact of the risk aversion attitude on the optimal investment and consumption strategies via both numerical and empirical analysis. For our empirical results, we choose the S\&P 500 index as the benchmark process and select the Sony Group Corporation (SONY) as the risky asset\footnote{The data is retrieved from \url{https://finance.yahoo.com} with the period of January 1, 2023 to  January 2, 2024.}. Based on the historical data, we calibrate the return rate and volatility parameters in both our Black-Scholes stock price model \eqref{stockSDE} and the Black-Scholes benchmark process \eqref{eq:Zt} by using the approach of maximum likelihood estimation (MLE) (c.f. \citealt{Brigo2009}). Here, we use the daily returns from January 1, 2023 through December 31, 2023. Denote respectively by $S_0,S_1,\ldots,S_n$ and $Z_0,Z_1,\ldots,Z_n$, with $n=249$ (note that there were $250$ trading days during the time period). We can define the sequence $(Y_i)_{i\geq 1}$ on the log stock prices that
\begin{align*}
Y_i:=\ln(S_i)-\ln(S_{i-1}),\quad \forall i=1,\ldots,n.
\end{align*}
For the sample average of the log stock price sequence $(Y_i)_{i\geq 1}$ given by $\overline{Y}:=\frac{1}{n}\sum_{i=1}^n Y_i$, we estimate the return parameter $\mu$ and the volatility parameter $\sigma$ of the stock by using the following estimators with $\Delta t=1$ (day):
\begin{align*}
\widehat{\mu}=\frac{\overline{Y}}{\Delta t}+\frac{1}{2}\widehat{\sigma}^2,\quad \widehat{\sigma}=\sqrt{\frac{1}{n\Delta t} \sum_{i=1}^n (Y_i-\overline{Y})^2}.
\end{align*}
In a similar fashion, we can estimate the return rate parameter $\mu_Z$ and the volatility parameter $\sigma_Z$ of the benchmark process by using the following estimators with $\Delta t=1$ (day):
\begin{align*}
\widehat{\mu}_Z=\frac{\overline{Y}^Z}{\Delta t}+\frac{1}{2}\widehat{\sigma}_Z^2,\quad \widehat{\sigma}_Z=\sqrt{\frac{1}{n\Delta t} \sum_{i=1}^n \left(Y_i^Z-\overline{Y}^Z\right)^2}.
\end{align*}
Here, the sample average $\overline{Y}^Z:=\frac{1}{n}\sum_{i=1}^n Y_i^Z$, where $(Y_i^Z)_{i\geq1}$ is  the log benchmark sequence defined by
\begin{align*}
 Y_i^Z:=\ln(Z_i)-\ln(Z_{i-1}),\quad \forall i=1,\ldots,n.    
\end{align*}
By implementing the maximum likelihood estimation (MLE) (c.f. \citealt{Brigo2009}) with the historical data, we then obtain the estimated values of the parameters as shown in Table \ref{table:parameters}:
\begin{table}[h]   
\begin{center}   
\caption{Estimated parameters of the risky asset and benchmark process using MLE.}  
\label{table:parameters} 
\begin{tabular}{ m{6cm}<{\centering} |m{6cm}<{\centering}}   
\hline\hline Estimated parameters & Estimated values\\
\hline $\widehat{\mu}$ & $9.7399\times 10^{-4}$\\   
\hline  $\widehat{\sigma}$ &0.0158  \\ 
\hline   $\widehat{\mu}_Z$ & $9.2137\times 10^{-4}$ \\  
\hline   $\widehat{\sigma}_Z$ &0.0082 \\  
\hline\hline
\end{tabular}   
\end{center}   
\end{table}
With the estimated parameters above, Table \ref{table:portfolio} then presents the implied optimal portfolio strategy on January 2, 2024 (the first trading day in 2024) and Table \ref{table:consumption} shows the implied optimal consumption strategy, where the price of the initial level of benchmark process $z=4742.83$, the risk aversion level $1-p$ varies in the set $\{5,4,3,2,0.9,0.85,0.8,0.75\}$ and the initial wealth level $x\in\{1,4,10\}$. We set the discount rate $\rho=1$ and the cost parameter of capital injection $\beta=5$.
\begin{table}[h]   
\begin{center}   
\caption{The optimal (feedback) portfolio strategy $\theta^*(x,z)$ ($\times 10^{3}$).}  
\label{table:portfolio} 
\begin{tabular}{ m{1.2cm}<{\centering} m{1.2cm}<{\centering}m{1.2cm}<{\centering}m{1.2cm}<{\centering}m{1.2cm}<{\centering}m{1.2cm}<{\centering}m{1.2cm}<{\centering}m{1.2cm}<{\centering}m{1.2cm}<{\centering}}   
\hline\hline   $1-p$ &5 &4 &3 &2 &0.9 &0.85 &0.8 &0.75\\   
\hline $x=10$ &2.4820 &2.4822 &2.4825 &2.4837 &2.5032 &2.5093 &2.5183 &2.5324\\
  $x=4$ &2.4815 &2.4814 &2.4813 &2.4813 &2.4822 &2.4824 &2.4825 &2.4828\\
   $x=1$ &2.4800 &2.4798 &2.4795 &2.4790 &2.4779 &2.4779 &2.4778 &2.4777\\
   \hline\hline 
\end{tabular}   
\end{center}   
\end{table}

\begin{table}[h]   
\begin{center}   
\caption{The optimal (feedback) consumption rate strategy $c^*(x,z)$.}  
\label{table:consumption} 
\begin{tabular}{ m{1.2cm}<{\centering} m{1.2cm}<{\centering}m{1.2cm}<{\centering}m{1.2cm}<{\centering}m{1.2cm}<{\centering}m{1.2cm}<{\centering}m{1.2cm}<{\centering}m{1.2cm}<{\centering}m{1.2cm}<{\centering}}   
\hline\hline   $1-p$ &5 &4 &3 &2 &0.9 &0.85 &0.8 &0.75\\   
\hline $x=10$ &1.0120 &1.0319  &1.0874 &1.3088  &6.0504  &7.5809  & 9.8646 &13.4391\\
  $x=4$ &0.9492 &0.9354 &0.9170 &0.9022 &1.0621 &1.0937 &1.1319 &1.1783\\
   $x=1$ &0.8011 &0.7558 & 0.6843   & 0.5601 &0.2773 &0.2578 &0.2375 & 0.2166\\
   \hline\hline 
\end{tabular}   
\end{center}   
\end{table}

Recall that the solution in \cite{Merton1971} suggests that a more risk-averse agent would invest less in the risky asset, see also  \cite{Broell07} and \cite{Xia11} for their theoretical conclusions under general utilities. However, the empirical study in \cite{Wang2021} illustrates that a larger risk aversion may induce higher investment if the proportion of less risk-averse investors in the population is sufficiently small. Empirical results in \cite{Chacko2005} also reveal that the consumption can either increase or decrease with respect to risk aversion, depending on the elasticity of intertemporal substitution of consumption. In the present paper, we can show by Figure \ref{fig:optimal-p} on various plots of optimal feedback functions that the risk taking induced by the capital injection also leads to the similar phenomenon that the optimal (feedback) portfolio $\theta^*(x,z)$ and the optimal (feedback) consumption rate $c^*(x,z)$ are not necessarily monotone in the risk aversion parameter $1-p$, which actually depend on different auxiliary state variable regimes. From our empirical results in Table \ref{table:portfolio} and Table \ref{table:consumption}, we have the consistent observations that the optimal portfolio and optimal consumption might be increasing or decreasing in $1-p$ depending on different auxiliary state variable levels.

\begin{figure}[h]
\centering
  \subfigure[]{
        \includegraphics[width=6cm]{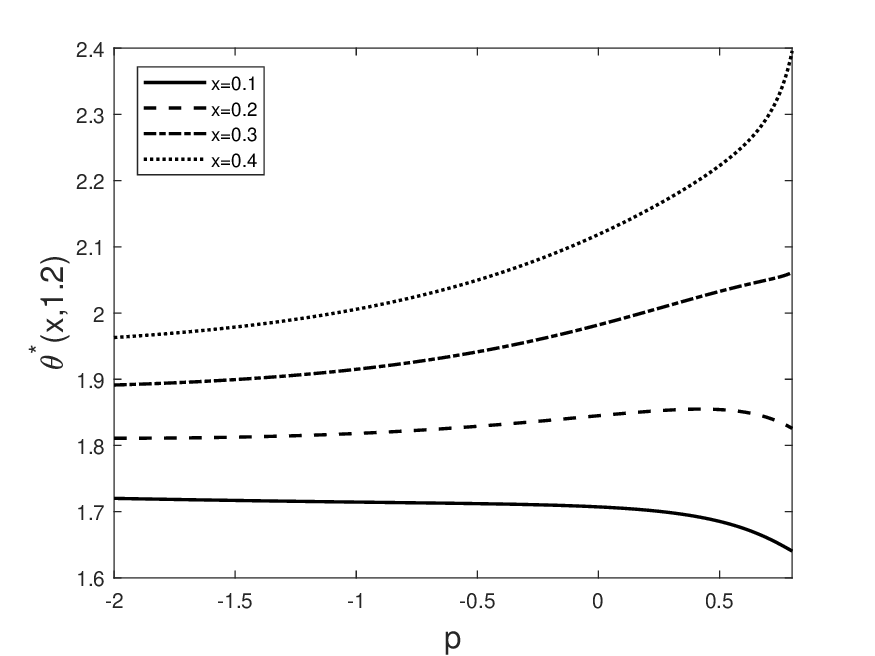}
    }\hspace{-4mm}
  \subfigure[]{
        \includegraphics[width=6cm]{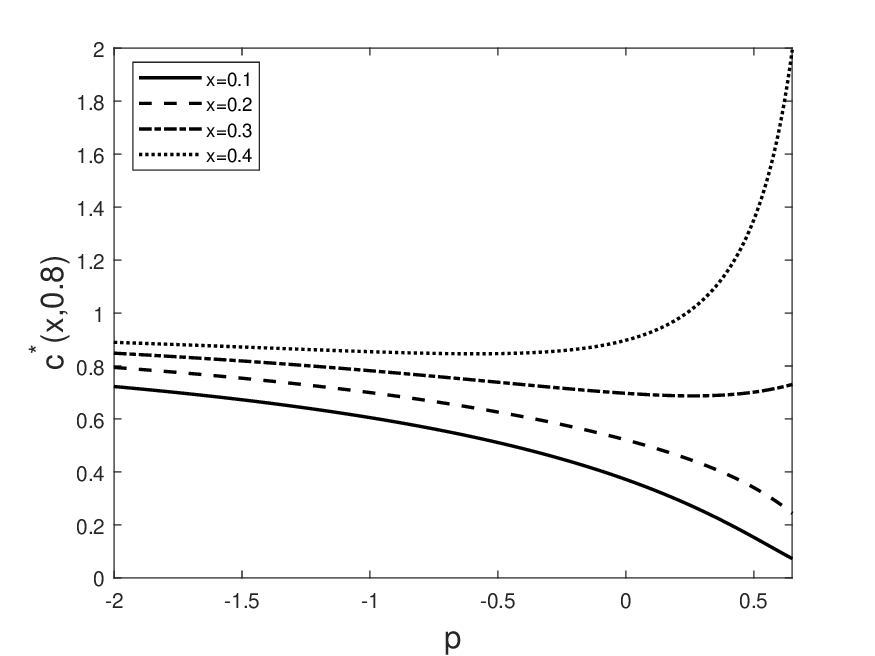}
    }
 \caption{(a): The optimal (feedback) portfolio $p\to \theta^*(x,1.2)$ as the risk aversion parameter $p$ varies. The model parameters are set as  $\rho=5,~\mu=1,~\sigma=1,~\mu_Z=3,~\sigma_Z=1,~\gamma=1,~\beta=2$. (b): The optimal (feedback) consumption rate $p\to c^*(x,0.8)$ as the risk aversion parameter $p$ varies. The model parameters are set as  $\rho=5,~\mu=1,~\sigma=1,~\mu_Z=2,~\sigma_Z=1,~\gamma=1,~\beta=4$.}\label{fig:optimal-p}
\end{figure}

Indeed, in our proposed new formulation, the risk aversion is distorted by the incentivized risk-taking from the possible capital injection to fulfill the benchmark constraint. When the auxiliary state variable level is relatively large (but not too large), the less risk averse (i.e. as $p$ tends to $1$) fund manager would invest and consume more than the highly risk averse fund manager. This can be explained by the fact that the low risk averse fund manager would take more risk by investing in the risky asset when the wealth level is healthy, which in turn leads to a higher consumption when the benchmark tracking can be maintained. However, when the auxiliary state variable level is very low, the less risk averse fund manager will put more concern on the cost of capital injection from its trade-off with the expected utility. To avoid high capital injection, the low risk averse fund manager will strategically reduce the portfolio and consumption plan to maintain the benchmark tracking.
\begin{figure}[h]
\centering
  \subfigure[]{
        \includegraphics[width=5.5cm]{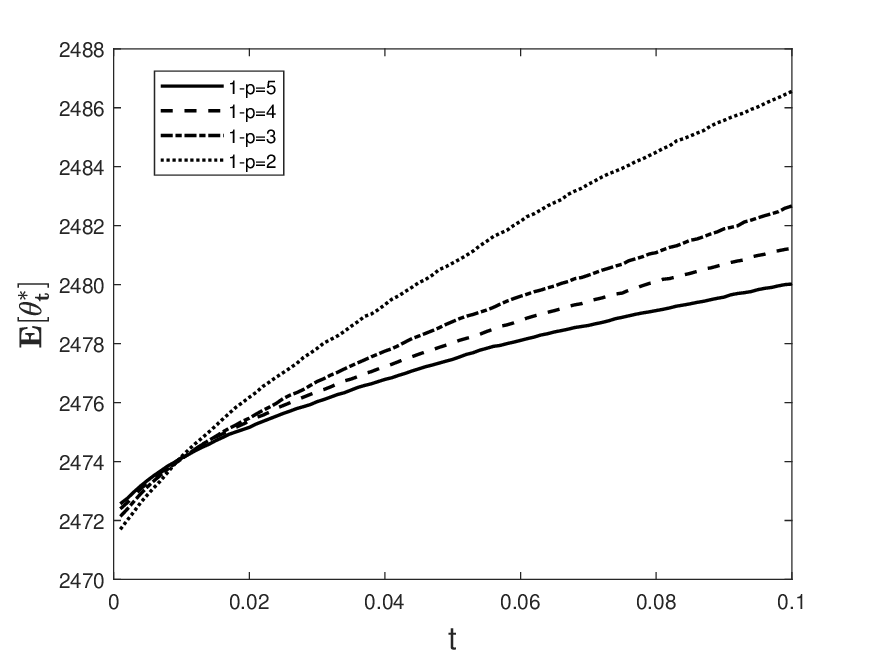}
    }\hspace{-8mm}
  \subfigure[]{
        \includegraphics[width=5.5cm]{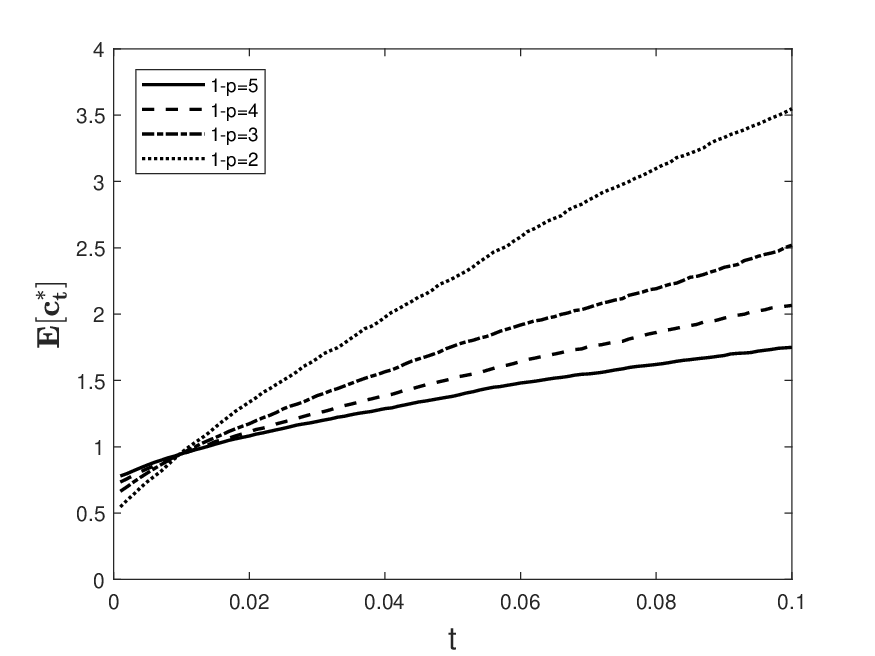}
    }

 \subfigure[]{
        \includegraphics[width=5.5cm]{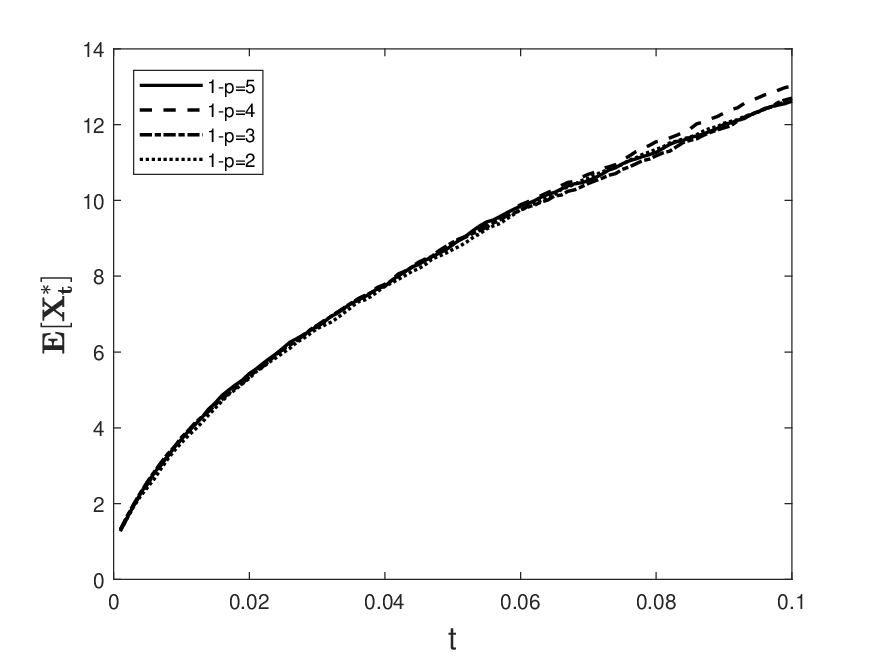}
    }
     \subfigure[]{
        \includegraphics[width=5.5cm]{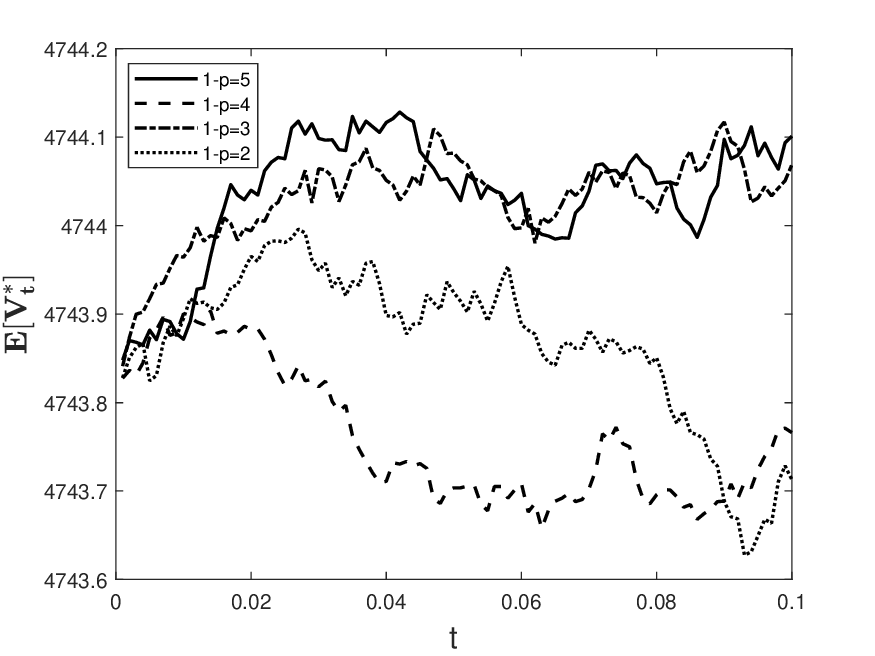}
    }
\caption{(a): The mean value of optimal portfolio $t\to \Ex[\theta^*_t]$. (b): The mean value of optimal consumption $t\to \Ex[c^*_t]$ under different choices of  risk
aversion level $1-p$. (c):  The mean value of auxiliary process $t\to \Ex[X^*_t]$. (d):  The mean value of wealth process $t\to \Ex[V^*_t]$. The model parameters are set as  $\rho=1,~\mu=9.7398\times 10^{-4},~\sigma=0.0158,~\mu_Z=9.2137\times 10^{-4},~\sigma_Z=0.0082,~\gamma=1,~\beta=5$. The initial level of wealth process, auxiliary state process and benchmark process are set as $(\mathrm{v},x,z)=(4743.83,1,4742.83)$. }\label{fig:optimal-x-1}
\end{figure}

To echo with the previous interesting observations, we also simulate the mean value of the optimal portfolio $t\to \Ex[\theta^*_t]$, the mean value of the optimal consumption rate $t\to \Ex[c^*_t]$, the mean value of auxiliary state process $t\to \Ex[X^*_t]$ and the mean value of wealth process $t\to \Ex[V^*_t]$ as functions of time $t$ via the Monte Carlo method (see Figure \ref{fig:optimal-x-1}). The model parameters are set to be the empirically estimated values in Table \ref{table:parameters} and we focus on the case with a low initial wealth level $x=1$. The panel (c) of Figures \ref{fig:optimal-x-1} shows that the expected auxiliary state  level is increasing along the time $t$ while the expected wealth level can be either increasing or decreasing. From panels (a) and (b), at the early stage of the investment horizon, both mean values of the optimal portfolio and consumption are increasing in the risk aversion parameter $1-p$, due to the fact the expected auxiliary state level is relatively low. Later on, when time $t$ is sufficiently large and the accumulated expected auxiliary state  becomes large, the monotonicity of the mean values of the optimal portfolio and consumption with respect to the risk aversion parameter $1-p$ overturns, which perfectly matches with previous observations in Table \ref{table:portfolio} and Table \ref{table:consumption} as well as the plots of optimal feedback functions in Figure \ref{fig:optimal-p}.

\section{Proofs}\label{sec:proof}

This section collects all proofs of auxiliary and main results in previous sections.

\begin{proof}[Proof of Lemma \ref{lem:equivalence}]

For the first claim, let $x=(\mathrm{v}-z)^+$. In view of \eqref{eq_orig_pb} and \eqref{eq:u0}, it suffices to show that
\begin{align}\label{eq:equivalence-1}
v(x,z)=\sup_{(\theta,c)\in\mathbb{U}^{\rm r}}\ \Ex\left[ \int_0^{\infty} e^{-\rho t} U(c_t)dt-\beta\int_0^{\infty} e^{-\rho t} d\left(0\vee \sup_{s\leq t}(Z_{s}-V_{s}^{\theta,c})\right)\right].
\end{align}
For any $(\theta,c)\in\mathbb{U}^{\rm r}$, if $\mathrm{v}\leq z$, i.e., $x=0$, then we have
\begin{align*}
L_t^X:=x\vee \sup_{s\leq t}D_s-x=0\vee \sup_{s\leq t}D_s=\sup_{s\leq t}D_s=\sup_{s\leq t}\left(Z_t-V_t^{\theta,c}\right)+\mathrm{v}-z,
\end{align*}
which yields that $dL_t^X=d\sup_{s\leq t}(Z_t-V_t^{\theta,c})=d(0\vee \sup_{s\leq t}(Z_{s}-V_{s}^{\theta,c}))$. On the other hand, if $\mathrm{v}> z$, i.e., $x=\mathrm{v}-z$, note that
\begin{align*}
L_t^X:=x\vee \sup_{s\leq t}D_s-x=0\vee \left(\sup_{s\leq t}D_s-x\right)=0\vee \left(\sup_{s\leq t}Z_t-V_t^{\theta,c}\right).
\end{align*}
Thus, the first claim holds.

For the second claim, with $(x,z)\in\R_+^2$, let $L^{x,z,\theta,c}=(L_t^{x,z,\theta,c})_{t\geq0}$ be the local time process of $X^x$ with $(X_0^x,Z_0^z)=(x,z)$ and $L_0^{x,z,\theta,c}=0$ under the control strategy $(\theta,c)\in\mathbb{U}^r$. Using the solution representation of ``the Skorokhod problem", we obtain that, for all $s\geq0$,
{\small\begin{align}\label{eq:skorokhod}
\begin{cases}
\displaystyle L_s^{x,z,\theta,c}=\sup_{\ell\in[0,s]}\left(x+\int_0^{\ell}\theta_{r}^{\top}\mu dr+\int_0^{\ell}\theta_{r}^{\top}\sigma dW_{r}  -\int_0^{\ell} c_{r} dr -\int_0^t \mu_Z Z_{\ell}^zd{\ell}-\int_0^t \sigma_Z Z_{\ell}^zdW_{\ell}^{\gamma}\right)^-,\\[1.2em]
\displaystyle Z_s^{z}=z\exp\left[\left(\mu_Z-\frac{1}{2}\sigma_Z^2\right)s+\sigma_Z W^{\gamma}_s\right].
\end{cases}
\end{align}}
By this, we have $x\to L_s^{x,z,\theta,c}$ is non-increasing. Moreover,  it holds that, $\Px$-a.s.
\begin{align}\label{eq:LiphatL0}
\sup_{s\geq0}\left|L_s^{x_1,z,\theta,c}-L_s^{x_2,z,\theta,c}\right|\leq |x_1-x_2|,\quad \forall (x_1,x_2)\in\R_+^2.
\end{align}
For any $\epsilon>0$, denote by $(\theta^{\epsilon}(x,z),c^{\epsilon}(x,z)))\in\mathbb{U}^{\rm r}$ the $\epsilon$-optimal control strategy for \eqref{eq:u0} with the initial state $(x,z)$. In other words, it holds that
\begin{align}\label{epsiloncontrol}
v(x,z)\leq J(x,z;\theta^{\epsilon}(x,z),c^{\epsilon}(x,z))+\epsilon.
\end{align}
Then, for any $x_1>x_2\geq0$, we have from \eqref{epsiloncontrol} that
\begin{align}\label{eq:diffVy1y2}
v(x_1,z)-v(x_2,z) &\geq  J(x_1,z;\theta^{\epsilon}(x_2,z),c^{\epsilon}(x_2,z))-J(x_2,z;\theta^{\epsilon}(x_2,z),c^{\epsilon}(x_2,z))-\epsilon\nonumber\\
&=-\beta\Ex\left[\int_0^{\infty}  e^{-\rho s}d(L_s^{x_1,z,\epsilon}-L_s^{x_2,z,\epsilon})\right]-\epsilon,
\end{align}
where $L_s^{x,z,\epsilon}$ for $s\in\R_+$ is the local time process with $X^x_0=x$, $Z^z_0=z$ and $L_0^{x,z,\epsilon}=0$ under the $\epsilon$-optimal control strategy $(\theta^{\epsilon}(x,z),c^{\epsilon}(x,z))$. Thus, integration by parts yields that, for any $T>0$,
\begin{align*}
\int_0^{T} e^{-\rho s}dL_s^{x,z,\epsilon} = e^{-\rho T}L_{T}^{x,z,\epsilon} + \rho\int_0^{T}  L_s^{x,z,\epsilon}e^{-\rho s}ds.
\end{align*}
Using the fact $L_s^{x_1,z,\epsilon}-L_s^{x_2,z,\epsilon}\leq0$ whenever $x_1>x_2\geq0$, it follows from  Monotone Convergence Theorem (MCT) that
\begin{align}\label{eq:diffLref}
&\Ex\left[\int_0^{\infty} e^{-\rho s}d(L_s^{x_1,z,\epsilon}-L_s^{x_2,z,\epsilon})\right]=\lim_{T\to \infty}\Ex\left[\int_0^{T} e^{-\rho s}d(L_s^{x_1,z,\epsilon}-L_s^{x_2,z,\epsilon})\right]\nonumber\\
&\quad=\lim_{T\to \infty}\left\{\Ex\left[e^{-\rho T}(L_{T}^{x_1,z,\epsilon}-L_{T}^{x_2,z,\epsilon})\right]+ \rho \Ex\left[\int_0^{T} e^{-\rho s}(L_s^{x_1,z,\epsilon}-L_s^{x_2,z,\epsilon})ds\right]\right\}\leq 0.
\end{align}
Hence, we have from \eqref{eq:diffVy1y2} that $v(x_1,z)-v(x_2,z)\geq -\epsilon$. As $\epsilon>0$ is arbitrary, we get $v(x_1,z)\geq v(x_2,z)$. This concludes that $x\to v(x,z)$ is non-decreasing. On the other hand, it follows from \eqref{eq:LiphatL0} and \eqref{eq:diffLref} that
\begin{align}\label{eq:betaLip-1}
&\left|v(x_1,z)-v(x_2,z)\right|\nonumber\\
 &\leq \beta \sup_{(\theta, c)\in\mathbb{U}^{\rm r}}\left|\Ex\left[\int_0^{\infty} e^{-\rho s}d(L_s^{x_2,z,\theta,c}-L_s^{x_1,z,\theta,c})\right]\right|\nonumber\\
&\leq \beta\sup_{(\theta, c)\in\mathbb{U}^{\rm r}} \lim_{T\to \infty}\Ex\left[e^{-\rho T}|L_{T}^{x_2,z,\theta,c}-L_{T}^{x_1,z,\theta,c}| + \rho \int_0^{T}e^{-\rho s} |L_s^{x_2,z,\theta,c}-L_s^{x_1,z,\theta,c}|ds\right]\nonumber\\
&\leq \beta\lim_{T\to\infty} \Ex\left[e^{-\rho T}|x_1-x_2| + \rho|x_1-x_2| \int_0^{T} e^{-\rho s}ds\right]\nonumber\\
&=\beta|x_1-x_2|.
\end{align}
Thus, we complete the proof of the lemma. 
\end{proof}

\begin{proof}[Proof of Lemma \ref{lem:budget-constraint}]
Applying It\^{o}'s lemma to $e^{-\rho t}X_tY_t$, together with \eqref{state-X} and \eqref{eq:Yt}, we get that
\begin{align}\label{eq:XY}
d(e^{-\rho t}X_tY_t)
&=-e^{-\rho t}\left(c_tY_t+F(Z_t)Y_t\right)dt-e^{-\rho t}X_tdL_t^Y+e^{-\rho t}Y_tdL_t^X\nonumber\\
&\quad+e^{-\rho t}\left[\left(-\mu^{\top}\sigma^{-1}X_tY_t+\theta_t^{\top}\sigma X_tY_t\right)dW_t-\sigma_Z (Z_t)Y_tdW_t^{\gamma}\right].
\end{align}
For any $T>0$, integrating w.r.t. $t$ on both sides of \eqref{eq:XY} from 0 to $T$, we arrive at
\begin{align}\label{eq:Mt}
&e^{-\rho T}X_TY_T+\int_0^T e^{-\rho t}(c_tY_t+F(Z_t)Y_t)dt+\int_0^T e^{-\rho t}X_tdL_t^Y\\
&\quad=xy+\int_0^T e^{-\rho t}\left[\left(-\mu^{\top}\sigma^{-1}X_tY_t+\theta_t^{\top}\sigma X_tY_t\right)dW_t-\sigma_Z (Z_t)Y_tdW_t^{\gamma}\right]+\int_0^Te^{-\rho t}Y_tdL_t^X:=M_T.\nonumber
\end{align}
It follows from Assumption $(\bm{A_{Z}})$ and $X_t,Y_t>0$, $\Px$-a.s., for $t\geq 0$ and the non-decreasing property of $t\to L_t^X$ that $M=(M_t)_{t\geq 0}$ is a nonneagtive process. For any $n\geq1$, define the stopping time by
\begin{align*}
\tau_n:=\inf\left\{t\geq 0;~\int_0^t\left|(\mu+\theta)^{\top}\sigma^{-1}X_tY_t\right|^2ds\geq n~\text{or}~\int_0^{t}|\sigma_Z (Z_t)Y_t|^2ds\geq n\right\}.
\end{align*}  
Then, Fatou's lemma and MCT yield that
\begin{align}\label{eq:Mt-Fatou}
\Ex[M_T]=\Ex\left[\liminf_{n\to \infty}M_{T\wedge \tau_n}\right]\leq \liminf_{n\to \infty}\Ex\left[M_{T\wedge \tau_n}\right]&=xy+\liminf_{n\to \infty}\Ex\left[\int_0^{T\wedge \tau_n}e^{-\rho t}Y_tdL_t^X\right]\nonumber\\
&=xy+\Ex\left[\int_0^{T}e^{-\rho t}Y_tdL_t^X\right].
\end{align}
Letting $T$ tend to infinity on both sides of \eqref{eq:Mt-Fatou} and using \eqref{eq:Mt} and MCT, we obtain the desired characterization \eqref{eq:budget}.
\end{proof}

Before providing the proof of Theorem \ref{thm:duality}, we first present some technical preparations on the dual process and the dual value function. 
\begin{lemma}\label{lem:transversality}
Let Assumptions $(\bm{A_{Z}})$, $(\bm{A_{o}})$ and $(\bm{A_{\rho}})$ hold. Then
\begin{itemize}
    \item[{\rm(i)}] for the constant $p<1$ in $(\bm{A_{o}})$, we have $\lim_{T\to \infty}\Ex\left[e^{-\rho T} Y_T^{\frac{p}{p-1}}\right]=0$;
    \item[{\rm(ii)}] the dual function $\hat{v}\in C^2((0,\beta]\times\R)$ and  $\hat{v}_y\in C^2((0,\beta]\times\R)$;
    \item[{\rm(iii)}]  for any  $z\in\R$, the dual function $y\to \hat{v}(y,z)$ is strictly decreasing and strictly convex.
\end{itemize}
\end{lemma}

\begin{proof}
For the case $p<0$, the result obviously holds as $Y_T^{\frac{p}{p-1}}\leq \beta^{\frac{p}{p-1}}$ a.s., for all $T\geq 0$. It is then sufficient to consider the case $p\in(0,1)$.  For any $t\geq 0$,  define $R_t=-\ln(Y_t/\beta)$ , i.e. $Y_t=\beta e^{-R_t}$. It follows from the It\^o's lemma that $R=(R_t)_{t\geq0}$, taking values on $[0,\infty)$, is a reflected process that
\begin{align}\label{eq:Rt}
R_t^{r}=r+ (\alpha-\rho)t+\mu^{\top}{\sigma}^{-1}W_t+L_t^{R},
\end{align}
where $r:=-\ln(y/\beta)$ and $L^{R}=(L_t^{R})_{t\geq 0}$ is a continuous and non-decreasing process that increases only on the time set $\{t\geq 0;~R_t=0\}$ with $L_0^{R}=0$.  Then, we have
\begin{align*}
0\leq \liminf_{T\to \infty}e^{-\rho T}\Ex\left[Y_T^{\frac{p}{p-1}}\right]&\leq \limsup_{T\to \infty}e^{-\rho T}\Ex\left[Y_T^{\frac{p}{p-1}}\right]=\limsup_{T\to \infty}\beta^{\frac{p}{p-1}} \Ex\left[e^{-\rho T+{\frac{p}{1-p}}R_T}\right].
\end{align*}
Applying It\^o's lemma to $e^{-\rho T+\frac{p}{1-p}R_T}$ and taking expectation, we obtain that
\begin{align}\label{eq:Ito-R}
\Ex\left[e^{-\rho T+\frac{p}{1-p}R_T}\right]&=e^{\frac{p}{1-p}r}+\frac{p}{1-p}\Ex\left[\int_0^Te^{-\rho t}dL_t^R\right]+\frac{\alpha p-(1-p)\rho}{(1-p)^2}\int_0^T \Ex\left[e^{-\rho t+\frac{p}{1-p}R_t}\right] dt\nonumber\\
&\leq e^{\frac{p}{1-p}r}+\frac{p}{1-p}e^{-r}+\frac{\alpha p-(1-p)\rho}{(1-p)^2}\int_0^T \Ex\left[e^{-\rho t+\frac{p}{1-p}R_t}\right] dt
\end{align}
It follows from \eqref{eq:Ito-R} and the Gronwall’s lemma that, for all $T\geq 0$,
\begin{align*}
\Ex\left[e^{-\rho T+\frac{p}{1-p}R_T}\right]
&\leq\left(e^{\frac{p}{1-p}r}+\frac{p}{1-p}e^{-r}\right)\exp\left(-\frac{(1-p)\rho-\alpha p}{(1-p)^2}T\right).
\end{align*}
This yields that, for any $\rho>\alpha p/(1-p)$,
\begin{align*}
\limsup_{T\to \infty}e^{-\rho T}\Ex\left[Y_T^{\frac{p}{p-1}}\right]&\leq\limsup_{T\to \infty}\beta^{\frac{p}{p-1}} \Ex\left[e^{-\rho T+{\frac{p}{1-p}}R_T}\right]=0.
\end{align*}
Thus, we complete the proof of item (i).

Next, we prove (ii) and (iii). To this end, we introduce the function $u(r,z)$ with $(r,z)\in\R_+\times\R$ satisfying $u(r,z)=\hat{v}(\beta e^{-r},z)$. Then, the function $u(r,z)$ has the probabilistic representation that, for all $(r,z)\in\R_+\times\R$,
\begin{align}\label{sol:u}
u(r,z)&=\Ex\left[\int_0^{\infty} e^{-\rho s}\Phi\left(\beta e^{-R_s^r}\right)ds\right]- \beta \Ex\left[ \int_0^{\infty}e^{-\rho s-R_s^r}F(Z_s) ds\right]\nonumber\\
&=:l(r)+ h(r,z).
\end{align}
For the function $l(r)$, using Assumptions $(\bm{A_{o}})$  and $(\bm{A_{p}})$, it follows that, for some constant $K>0$, 
\begin{align}\label{eq:l-func}
|l(r)|&\leq K\Ex\left[\int_0^{\infty} e^{-\rho s}\left(1+\beta^{\frac{p}{p-1}} e^{\frac{p}{1-p}R_s^r}\right)ds\right]\nonumber\\
&=K\left(\frac{1}{\rho}+\frac{(1-p)^2}{\rho(1-p)-\alpha p}\beta^{-\frac{p}{1-p}} e^{\frac{p}{1-p}r}+\frac{p(1-p)}{\rho(1-p)-\alpha p}\beta^{-\frac{p}{1-p}} e^{-r}\right),\quad \forall r\in\R_+.
\end{align}
For the function $h(r,z)$, it follows from Assumptions $(\bm{A_{Z}})$  and $(\bm{A_{p}})$ that
\begin{align}\label{eq:h-func}
|h(r,z)|\leq K(1+z),\quad \forall (r,z)\in\R_+\times\R,
\end{align}
for some constant $K>0$.
Using \eqref{eq:l-func} and \eqref{eq:h-func}, we have  $|\hat{v}(y,z)|<\infty$ for $(y,z)\in(0,\beta]\times\R$, thus the dual function $\hat{v}(y,z)$ is well-defined. 

By a similar calculation as in the proof of Proposition  4.1 in \cite{BoLiaoYu21} and in the proof of Lemma 3.4 in \cite{BHY24}, we can obtain 
\begin{align}
l_r( r)&=-\beta\mathbb{E}\left[\int_0^{\tau_r } e^{-\rho s}\Phi'\left(\beta e^{-R_s}\right) ds\right].\label{eq:l-r-1}\\
l_{rr}(r)&=-\beta\int_{0}^{\infty}\int_{-\infty}^{r} e^{-\rho s}\Phi'(\beta e^{-r+x})\phi(s,x,r)dx ds+\beta\Ex\left[\int_{0}^{\tau_r} e^{-\rho s}\Phi''\left(\beta e^{-R_s}\right)ds\right],\label{eq:l-r-2}\\
l_{rrr}(r)&=2\beta  \int_{0}^{\infty}\int_{-\infty}^{r} e^{-\rho s}\Phi''(\beta e^{-r+x})\phi(s,x,r)dx ds-\beta  \Ex\left[\int_{0}^{\tau_r} e^{-\rho s}\Phi'''\left(\beta e^{-R_s}\right)ds\right]\label{eq:l-r-3}\\
&\quad-\beta  \int_{0}^{\infty} e^{-\rho s}\Phi'(\beta)\phi(s,r,r)ds-\beta \int_{0}^{\infty}\int_{-\infty}^{r} e^{-\rho s}\Phi'(\beta e^{-r+x})\phi_r(s-t,x,r)dx ds.\nonumber
\end{align}
Here, the stopping time $\tau_r$ is defined by
$\tau_r:=\inf \left\{s \geq 0 ;~-\mu^{\top}\sigma^{-1}W_s-\left(\alpha-\rho\right)s=r\right\}$, and the function $\phi(s,x,y)$ is given by 
\begin{align}\label{eq:phi}
\phi(s,x,y)= \frac{2(2 y-x)}{\tilde{\sigma}^2\sqrt{2\tilde{\sigma}^2 \pi s^3}} \exp \left(\frac{\tilde{\mu}}{\tilde{\sigma}} x-\frac{1}{2} \tilde{\mu}^2 s-\frac{(2 y-x)^2}{2\tilde{\sigma}^2 s}\right)
\end{align}
with parameters $\tilde{\mu}:=\sqrt{\alpha/2}-\rho/\sqrt{2\alpha}$ and $\tilde{\sigma}:=-\sqrt{2\alpha}$. Following the proof of Proposition  4.1 in \cite{BoLiaoYu21} and the proof of Lemma 3.4 in \cite{BHY24} again, we can similarly get that $h(r,z)\in C^{2}(\R_+\times \R)$ and $h_r(r,z)\in C^{2}(\R_+\times \R)$ with
\begin{align}
h_r(r,z)&= \beta \Ex\left[ \int_0^{\tau_r}e^{-\rho s-R_s^r}F(Z_s) ds\right],\label{eq:h-r-1}\\
h_{rr}(r,z)&= \beta\Gamma_1 \Ex\left[e^{-\rho \tau_r} F(Z_{\tau_r})\right]-\beta \Ex\left[ \int_0^{\tau_r}e^{-\rho s-R_s^r}F(Z_s) ds\right],\label{eq:h-r-2}\\
h_{rrr}(r,z)&=\beta\Gamma_1^2 \Ex\left[e^{-\rho \tau_r}\left(\mu_Z(Z_{\tau_r})F'(Z_{\tau_r})+\frac{1}{2}\sigma^2_Z(Z_{\tau_r})F''(Z_{\tau_r})\right)\right]+\beta\Gamma_1\Gamma_2\Ex\left[F(Z_{\tau_r})\right]\nonumber\\
&- \beta\Gamma_1\Ex\left[e^{-\rho \tau_r}F(Z_{\tau_r})\right]+\beta \Ex\left[ \int_0^{\tau_r}e^{-\rho s-R_s^r}F(Z_s) ds\right],\label{eq:h-r-3}
\end{align}
where the parameters  $\Gamma_1:=\int_0^{\infty}\frac{1}{\sqrt{4\alpha\pi s}}e^{-\frac{(\rho-\alpha)^2}{4\alpha}s} ds$ and $\Gamma_2:=\int_0^{\infty}\frac{1}{\sqrt{4\alpha\pi s^3}}e^{-\frac{(\rho+\alpha)^2}{4\alpha}s} ds$. From the above equations, we deduce $u\in C^2(\R_+\times\R)$ and  $u_r\in C^2(\R_+\times\R)$, which implies $\hat{v}\in C^2((0,\beta]\times\R)$ and  $\hat{v}_y\in C^2((0,\beta]\times\R)$.

Noting that $\Phi'(x)=-I(x)<0$ and $\Phi''(x)=-1/U''(I(x))>0$ for all $x>0$, we deduce that $l'(r)>0$, $h_r(r,z)\geq 0$ , $l'(r)+l''(r)>0$ and $h_r(r,z)+h_{rr}(r,z)\geq 0$ for any $r\in\R_+$. It also holds that
\begin{align*}
\hat{v}_{y}(y,z)&=-\frac{1}{y}u_r(r,z)=-\frac{1}{y}\left(l_r(r)+h_r(r,z)\right)<0,\\
\hat{v}_{yy}(y,z)&=\frac{1}{y^2}\left(u_r(r,z)+u_{rr}(r,z)\right)=\frac{1}{y}\left(l_r(r)+l_{rr}(r)+h_r(r,z)+h_{rr}(r,z)\right)>0
\end{align*}
with $r=-\ln(y/\beta)$. As a consequence, for any $z\in\R$, the function $y\to \hat{v}(y,z)$ is strictly decreasing and  strictly convex. 
\end{proof}

Using Lemma \ref{lem:transversality}, we can show that the dual function $\hat{v}(y,z)$ given by \eqref{eq:dual-func} satisfies a linear PDE:
\begin{proposition}\label{prop:dual-equation}
Let Assumptions $(\bm{A_{Z}})$, $(\bm{A_{o}})$ and $(\bm{A_{\rho}})$  hold. The dual function $\hat{v}(y,z)$ given by \eqref{eq:dual-func} is a classical solution of the following linear dual PDE that, for $(y,z)\in(0,\beta]\times\R$,
\begin{align}\label{eq:HJB-dual-hatv} 
\begin{cases}
\displaystyle-\rho \hat{v}(y,z)+\rho y \hat{v}_y(y,z)+\alpha y^2 \hat{v}_{y y}(y,z)+\mu_Z (z) \hat{v}_z(y,z)+\frac{\sigma_Z^2(z)}{2}\hat{v}_{z z}(y,z)\\[0.6em]
\displaystyle\qquad\qquad\qquad-\eta (z) y \hat{v}_{y z}(y,z)-F(z) y+\Phi(y)=0;\\[0.4em]
\displaystyle \hat{v}_y(\beta,z)=0
\end{cases}
\end{align}
with $\eta(z):=\sigma_Z(z)\gamma^{\top}\sigma^{-1}\mu$ for $z\in\R$.
Moreover, if $\varphi(y,z)$ is a classical solution of the dual equation \eqref{eq:HJB-dual-hatv} satisfying $|\varphi(y,z)|\leq K(1+z+y^{\frac{p}{p-1}})$ for some constant $K>0$, then it admits the probability representation \eqref{eq:dual-func}.
\end{proposition}

\begin{proof}
By using Lemma \ref{lem:transversality}, \eqref{eq:l-func} and \eqref{eq:h-func}, there exists some $K>0$ such that, for all $(y,z)\in(0,\beta]\times\R$, 
\begin{align}\label{eq:hatv-low}
\hat{v}(y,z)\geq \hat{v}(\beta,z)=u(1,z)=l(1)-K(1+z).
\end{align}
This, together with Assumptions $(\bm{A_{Z}})$ and $(\bm{A_{\rho}})$, yields that
\begin{align}\label{eq:inf-hatv}
\liminf_{T\to \infty}\Ex\left[e^{-\rho T} v\left(Y_T,Z_T\right)\right]&\geq\liminf_{T\to \infty}\Ex\left[e^{-\rho T} \left(l(1)+K(1+Z_T)\right)\right]\nonumber\\
&\geq \liminf_{T\to \infty}Ke^{-(\rho-\sup_{z\in\R}\mu_Z'(z))T}=0.
\end{align}
On the other hand, we have from  \eqref{eq:l-func} and \eqref{eq:h-func} that
\begin{align}\label{eq:hatv-high}
\hat{v}(y,z)\leq K\left(1+y^{\frac{p}{p-1}}\right),\quad \forall (y,z)\in(0,\beta]\times\R
\end{align}
for some constant $K>0$ independent of $y$ and $z$. By Lemma \ref{lem:transversality} and \eqref{eq:hatv-high}, we can get that
\begin{align}\label{eq:sup-hatv}
\limsup_{T\to \infty}\Ex\left[e^{-\rho T} v\left(Y_T,Z_T\right)\right]&\leq\limsup_{T\to \infty}\Ex\left[e^{-\rho T}K\left(1+Y_T^{\frac{p}{p-1}}\right)\right]=0.
\end{align}
Thanks to \eqref{eq:inf-hatv} and \eqref{eq:sup-hatv}, we arrive at
\begin{align}\label{eq:lim-hatv}
\lim_{T\to \infty}\Ex\left[e^{-\rho T} v\left(Y_T,Z_T\right)\right]=0.
\end{align}
Then, the desired result follows from Lemma \ref{lem:transversality}, \eqref{eq:lim-hatv} and a  similar proof of Lemma 3.3 in \cite {BHY24}.
\end{proof}

We are now in a position to prove Theorem \ref{thm:duality}.
\begin{proof}[Proof of Theorem \ref{thm:duality}]
For any $(\theta,c)\in\mathbb{U}^r$, let us consider the resulting state process $(X,Z)=(X_t,Z_t)_{t\geq 0}$ given by \eqref{state-X} and \eqref{eq:Zt}. It follows from Lemma \ref{lem:budget-constraint} that
\begin{align}\label{eq:dual-ineq}
&\Ex\left[\int_0^\infty e^{-\rho t} U(c_t)dt- \beta \int_0^{\infty} e^{-\rho t}dL_t^X\right]-xy+xy\nonumber\\
&\leq \Ex\left[\int_0^\infty e^{-\rho t} U(c_t)dt- \beta \int_0^{\infty} e^{-\rho t}dL_t^X\right]\nonumber\\
&\quad-\Ex\bigg[\int_0^{\infty}e^{-\rho t}\Big(c_tY_t+F(Z_t)Y_t\Big)dt+\int_0^{\infty}e^{-\rho t}X_tdL_t^Y-\int_0^{\infty}e^{-\rho t}Y_tdL_t^X\bigg]+xy\nonumber\\
&=\Ex\left[\int_0^\infty e^{-\rho t}\left(\left( U(c_t)-c_tY_t\right)-F(Z_t)Y_t\right)dt-  \int_0^{\infty} e^{-\rho t}\left((\beta-Y_t)dL_t^X+X_tdL_t^Y\right)\right]+xy\nonumber\\
&\leq \Ex\left[\int_0^\infty e^{-\rho t} \left(\Phi(Y_t)-F(Z_t)Y_t\right)dt\right]+xy=\hat{v}(y,z)+xy.
\end{align}
The equality in the above display holds if and only if 
\begin{align}\label{eq:condition-dual}
\begin{cases}
\displaystyle \Ex\left[\int_0^{\infty}e^{-\rho t}\left(c_tY_t+F(Z_t)Y_t\right)dt+\int_0^{\infty}e^{-\rho t}X_tdL_t^Y-\int_0^{\infty}e^{-\rho t}Y_tdL_t^X\right]= xy,\\[0.9em]
\displaystyle X_t=0\Longleftrightarrow Y_t=\beta ,~~\forall t\geq 0.
\end{cases}
\end{align}

For any $(x,z)\in\R_+\times \R$, let $y^*(x,z)$ be the function satisfying $ -\hat{v}_y(y^*(x,z),z)=x$. As $\hat{v}_{yy}{\color{red}>}0$, $\hat{y}(\beta,z)=0$ and $\lim_{y\to 0}\hat{v}_y(y,z)=-\infty$, we have $y^*(x,z)$ is well-defined. We claim that, if one takes $(\theta,c)=(\theta^*,c^*)$ and $y=y^*(x,z)$, then the conditions in \eqref{eq:condition-dual} hold. Denote by $X^*=(X^*_t)_{t\geq 0}$ the state process under $(\theta^*,c^*)$. Let us introduce the process $\tilde{X}^*=(\tilde{X}^*_t)_{t\geq 0}$ given by
\begin{align*}
\tilde{X}_t^*=-\hat{v}_y(Y_t,Z_t),~~t\geq0,~~ \tilde{X}_0^*=-\hat{v}_y(Y_0,Z_0)=-\hat{v}_y(y^*(x,z),z)=x.   
\end{align*}
For any $t\geq0$, applying It\^o's lemma to $-\hat{v}_y(Y_t,Z_t)$ yields that
\begin{align}\label{eq:tilde-X-1}
d\tilde{X}_t^*&=-\Big(\rho Y_t\hat{v}_{yy}(Y_t,Z_t)+\alpha Y_t^2 \hat{v}_{yyy}(Y_t,Z_t)+\mu_Z(Z_t)\hat{v}_{yz}(Y_t,Z_t)+\frac{1}{2}\sigma_Z^2 (Z_t)\hat{v}_{yzz}(Y_t,Z_t)\\
&-\eta (Z_t) Y_t \hat{v}_{yy z}(Y_t,Z_t)\Big)dt+\mu^{\top}\sigma^{-1}Y_t\hat{v}_{yy}(Y_t,Z_t)dW_t-\sigma_Z (Z_t )\hat{v}_{yz}(Y_t,Z_t)dW_t^{\gamma}+\hat{v}_{yy}(Y_t,Z_t)dL_t^Y.\nonumber
\end{align}
Differentiating w.r.t. $y$ on both sides of \eqref{eq:HJB-dual-hatv}, we get that
\begin{align}\label{eq:tilde-X-2}
&\rho y \hat{v}_{yy}(y,z)+\alpha y^2 \hat{v}_{y yy}(y,z)+\mu_Z (z )\hat{v}_{yz}(y,z)+\frac{\sigma^2_Z(z)}{2}  \hat{v}_{yz z}(y,z)-\eta (z) y \hat{v}_{yy z}(y,z)\nonumber\\
&\qquad\qquad=-(2\alpha y \hat{v}_{y y}(y,z)-\eta (z)  \hat{v}_{y z}(y,z)-F(z) +\Phi'(y)).
\end{align}
It follows from \eqref{eq:optimal-control-Yt}, \eqref{eq:tilde-X-1}, \eqref{eq:tilde-X-2} and $\Phi'(y)=-I(y)$ that
\begin{align}
d\tilde{X}_t^*=(\theta^*_t)^{\top}\mu dt+(\theta^*_t)^{\top}\sigma dW_t  - c^*_t dt- \mu_Z (Z_t)dt- \sigma_Z (Z_t)dW_t^{\gamma}+dL_t^{\tilde{X}^*},~~t>0,
\end{align}
with $L_t^{\tilde{X}^*}=\int_0^t\hat{v}_{yy}(Y_s,Z_s)dL_s^Y=\int_0^t\hat{v}_{yy}(\beta,Z_s)dL_s^Y$ for all $t\geq0$. Note that $\hat{v}_y(y,z)< 0$ for $y\in(0,\beta)$, $\hat{v}_y(\beta,z)= 0$ and $\hat{v}_{yy}(y,z)>0$ for $(y,z)\in(0,\beta]\times\R$. Then, $L^{\tilde{X}^*}=(L_t^{\tilde{X}^*})_{t\geq0}$ is a continuous and non-decreasing process (with $L_0^{\tilde{X}^*}=0$)  which increases on the time set $\{t\geq 0;~\tilde{X}_t^*=0\}$ only. This implies that  $\tilde{X}^*$, taking values on $[0,\infty)$, is a reflected process and $L^{\tilde{X}^*}$ is the local time process of $\tilde{X}^*$. Using the solution representation of ``the Skorokhod problem", we obtain $X^*_t=\tilde{X}_t^*$ for all $t\geq 0$, which verifies the second condition in \eqref{eq:condition-dual}. Thus, to check the first condition in \eqref{eq:condition-dual}, it suffices to show that, for all $(x,z)\in\R_+\times \R$,
\begin{align*}
\Ex\bigg[\int_0^{\infty}e^{-\rho t}\Big(c_t^*Y_t+F(Z_t)Y_t\Big)dt-\beta\int_0^{\infty}e^{-\rho t}dL_t^{X^*}\bigg]= xy^*(x,z),
\end{align*}
which is equivalent to that, for all $(y,z)\in(0,\beta]\times\R_+$,
\begin{align*}
\Ex\bigg[\int_0^{\infty}e^{-\rho t}\Big(I(Y_t)Y_t+F(Z_t)Y_t\Big)dt-\beta\int_0^{\infty}e^{-\rho t}\hat{v}_{yy}(\beta,Z_t)dL_t^Y\bigg]= -y\hat{v}_y(y,z).
\end{align*}
Denote by
\begin{align*}
\varphi(y,z)
=\Ex\bigg[\int_0^{\infty}e^{-\rho t}\Big(I(Y_t)Y_t+F(Z_t)Y_t\Big)dt-\beta\int_0^{\infty}e^{-\rho t}\hat{v}_{yy}(\beta,Z_t)dL_t^Y\bigg| Y_0=y,Z_0=z\bigg].
\end{align*}
In a similar fashion of the proof to Proposition  4.1 in \cite{BoLiaoYu21} and the proof to Lemma 3.3 in \cite{BHY24}, we have that,  the function $\varphi(y,z)$ is the uniqueness classical solution satisfying $|\varphi(y,z)|\leq K(1+z+y^{\frac{p}{p-1}})$ for some constant $K>0$ to the linear PDE with Neumann boundary condition:
\begin{align}\label{eq:y-hatv} 
\begin{cases}
\displaystyle-\rho \varphi(y,z)+\rho y \varphi_y(y,z)+\alpha y^2 \varphi_{y y}(y,z)+\mu_Z (z) \varphi_z(y,z)+\frac{\sigma_Z^2(z)}{2}  \varphi_{z z}(y,z)\\[0.6em]
\displaystyle\qquad\qquad\qquad-\eta (z) y \varphi_{y z}(y,z)+F(z) y+yI(y)=0,\\[0.4em]
\displaystyle \varphi_{y}(\beta,z)=\hat{v}_{yy}(\beta,z),\quad \forall z\in\R.
\end{cases}
\end{align}
On the other hand, for $(y,z)\in(0,\beta]\times \R$ and $r=-\ln(y/\beta)$, by the proof of Lemma \ref{eq:optimal-control-Yt} and Assumption $(\bm{A_{o}})$,  it holds that
\begin{align*}
0\leq -y\hat{v}_{y}(y,z)&=l'(r)+h_r(r,z)\leq \beta\mathbb{E}\left[\int_0^{\tau_r } e^{-\rho s}|\Phi'\left(\beta e^{-R_s}\right)| ds\right]+Kz\nonumber\\
&\leq \beta K\Ex\left[\int_0^{\infty} e^{-\rho s}\left(1+\beta^{\frac{1}{p-1}} e^{\frac{1}{1-p}R_s}\right)ds\right]+K z\nonumber\\
&=\beta K\left(\frac{1}{\rho}+\frac{p(1-p)}{\rho(1-p)-\alpha p}\beta^{-\frac{1}{1-p}} e^{\frac{p}{1-p}r}-\frac{p(1-p)}{\rho(1-p)-\alpha p}\beta^{-\frac{1}{1-p}} e^{-r}\right)+K z\nonumber\\
&\leq K\left(\frac{\beta}{\rho}+\frac{|p|(1-p)}{\rho(1-p)-\alpha p}\beta^{-\frac{p}{1-p}}+\frac{|p|(1-p)}{\rho(1-p)-\alpha p}y^{\frac{p}{1-p}}\right)+K z\nonumber\\
&\leq K\left(1+y^{\frac{p}{1-p}}+z\right),
\end{align*}
where $K>0$ is a constant that may be different from line to line.
By utilizing \eqref{eq:HJB-dual-hatv} and a direct calculation, it can be checked that, the function $-y\hat{v}_y(y,z)$ also satisfies the PDE \eqref{eq:y-hatv}, which yields that $\varphi(y,z)=-y\hat{v}_y(y,z)$ for all $(y,z)\in(0,\beta]\times \R$. By the arbitrariness of $(\theta,c) \in\mathbb{U}^r$ and $(x,z,y)\in\R_+\times \R\times (0,\beta]$, we deduce from \eqref{eq:dual-ineq} that
\begin{align*}
v(x,z)=\sup_{(\theta,c)\in\mathbb{U}^{\rm r}}\Ex\left[\int_0^\infty e^{-\rho t} U(c_t)dt- \beta \int_0^{\infty} e^{-\rho t}dL_t^X\right]\leq \inf_{y\in(0,\beta]}(\hat{v}(y,z)+xy).
\end{align*}
Thus, it holds that $\hat{v}(y^*,z)+xy^*=J(x,z;\theta^*,c^*)\leq v(x,z)\leq \inf_{y\in(0,\beta]}(\hat{v}(y,z)+xy)$, which readily yields the desired result.
\end{proof}

\begin{proof}[Proof of Corollary \ref{coro:optimal-control}]
Define $x^*(y,z)=v_x(\cdot,z)^{-1}(y)$ with $y\to v_x(\cdot,z)^{-1}(y)$ being the inverse function of $x\to v_x(x,z)$. Then, $x^*=x^*(y,z)$ satisfies the equation $v_x(x^*,z)=y$ for all $z\in\R$. We can obtain by some straightforward calculations that
\begin{align*}
    &\hat{v}(y,z)=v(x^*,z)-x^*y,\quad
    \hat{v}_y(y,z)=-x^*,\quad \hat{v}_z(y,z)=v_z(x^*,z),\quad
   \hat{v}_{yy}(y,z)=-\frac{1}{v_{xx}(x^*,z)},\\
   &\hat{v}_{yz}(y,z)=\frac{v_{xz}(x^*,z)}{v_{xx}(x^*,z)},\qquad
   \hat{v}_{zz}(y,z)=v_{zz}(x^*,z)-\frac{v_{xz}(x^*,z)^2}{v_{xx}(x^*,z)}.
\end{align*}
In view of above dual relationship equations, it is easy to check that, the state process $X^*$ under the optimal strategy given by \eqref{eq:optimal-control-Yt} has the same dynamics as the one under the feedback control pair defined in this corollary, which yields that the two strategy pairs are identical. Thus, we complete the proof of the corollary.
\end{proof}

\begin{proof}[Proof of Lemma \ref{lem:inject-captial}]
We first show the item (i). For any $(\mathrm{v},z)\in\R_+^2$, it follows from \eqref{eq:w}, Assumption $(\bm{A_{o}})$ and Lemma \ref{lem:equivalence} that
\begin{align}\label{eq:discountAsatr}
 \beta\Ex\left[\int_0^{\infty} e^{-\rho t}dA^*_t \right]&= \Ex\left[\int_0^{\infty} e^{-\rho t} U(c^*_t)dt\right] -{\rm w}(\mathrm{v},z)-\beta ( z-\mathrm{v})^+\nonumber\\
&=  \Ex\left[\int_0^{\infty} e^{-\rho t} U(I(Y_t))dt\right] -v(x, z)\nonumber\\
&=  \Ex\left[\int_0^{\infty} e^{-\rho t} \left(\Phi(Y_t)-Y_t\Phi'(Y_t)\right)dt\right]-v(x, z)\nonumber\\
&\leq \Ex\left[\int_0^{\infty} e^{-\rho t} \left(|\Phi(Y_t)|+|\Phi'(Y_t)|\right)dt\right]-v(x, z)
\nonumber\\
&\leq 2C\Ex\left[\int_0^{\infty} e^{-\rho t} \left(1+Y_t^{\frac{p}{p-1}}\right)dt\right]-v(x, z)\nonumber\\
&<M\left(1+y^{\frac{p}{p-1}}-v(x,z)\right)=M\left(1+v^{\frac{p}{p-1}}_x(x,z)-v(x,z)\right),
\end{align}
where the last inequality follows from the proof of Lemma \ref{lem:transversality}, and  $M>0$ is a constant depending on $(\mu,\sigma,\mu_Z,\sigma_Z,\gamma,p,\beta)$. Then, we can take the function $P(x,z)=M(1+v^{\frac{p}{p-1}}_x(x,z)-v(x,z))$ and get the inequality \eqref{eq:dAstarfinite}.

Next, we show the item (ii). Consider the stochastic control problem given by
\begin{align*}
\tilde{\mathrm{w}}(\mathrm{v},z):=\inf_{\theta \in\tilde{\mathbb{U}}}\Ex\left[  \int_0^{\infty} e^{-\rho t}d\tilde{A}_t^{\theta}\Big|V_0=\mathrm{v},Z_0=z \right],\quad (\mathrm{v},z)\in\R_+\times\R
\end{align*}
subject to the state processes $(\tilde{V}^{\theta},Z,\tilde{A}^{\theta})=(\tilde{V}_t^{\theta},Z_t,\tilde{A}_t^{\theta})_{t\geq0}$ satisfying
\begin{align}\label{eq:wealth-theta}
\begin{cases}
\displaystyle \tilde{V}_t^{\theta} =\textrm{v}+\int_0^t\theta_s^{\top}\mu ds+\int_0^t \theta_s^{\top}\sigma dW_s,\quad Z_t=z+\int_0^t \mu_Z (Z_s) d s+\int_0^t \sigma_Z (Z_s) d W_s^{\gamma}, \\[1.2em]
\displaystyle \tilde{A}_t^{\theta}=0\vee \sup_{s\leq t}(Z_{s}-\tilde{V}^{\theta}_{s}).
\end{cases}
\end{align}
Here, the admissible control set is defined as $\tilde{\mathbb{U}}:=\{ \theta=(\theta_t)_{t\geq0}$; $\theta$ is $\Fx$-adapted process taking values on $\R^d\}$.
Note that $c_t\geq 0$ for all $t\in\R_+$. Then, it follows from \eqref{eq:wealth2} and \eqref{eq:wealth-theta} that $\tilde{V}_t^{\theta^*}\geq V^{\theta^*,c^*}_t$ for all $t\in\R_+$, and hence
\begin{align}\label{eq:tilde-w}
\Ex\left[\int_0^{\infty} e^{-\rho t}dA^{\theta,c}_t \right]>\Ex\left[\int_0^{\infty} e^{-\rho t}d\tilde{A}_t^{\theta} \right]\geq \inf_{\theta \in\tilde{\mathbb{U}}}\Ex\left[\int_0^{\infty} e^{-\rho t}d\tilde{A}_t^{\theta} \right]=\tilde{\mathrm{w}}(\mathrm{v},z).
\end{align}
Similar to the proof of Lemma \ref{lem:equivalence}, we can get $\tilde{{\rm w}}(\mathrm{v},z)=-\tilde{v}((\mathrm{v}-z)^+,z)+ ( z-\mathrm{v})^+$ for all $(\mathrm{v},z)\in\R_+\times\R$, where the function $\tilde{v}$ is given by
\begin{align}\label{eq:tilde-v}
\begin{cases}
\displaystyle \tilde{v}(x,z):=\sup_{\theta\in\mathbb{U}}\Ex\left[-  \int_0^{\infty} e^{-\rho t}dL_t^X\Big|X_0=x,Z_0=z \right],\\[1em]
\displaystyle \tilde{X}_t =x+\int_0^t\theta_s^{\top}\mu ds+\int_0^t\theta_s^{\top}\sigma dW_s-\int_0^t \mu_Z (Z_s)ds-\int_0^t \sigma_Z (Z_s)dW_s^{\gamma}+ L_t^{\tilde{X}},\\[0.8em]
\displaystyle Z_t=z+\int_0^t \mu_Z (Z_s) d s+\int_0^t \sigma_Z (Z_s) d W_s^{\gamma}.
\end{cases}
\end{align}
As a direct result of Theorem \ref{thm:duality}, we have that
\begin{align*}
\tilde{v}(x,z)=-\Ex\left[\int_0^{\infty}F(Z_t)Y_tdt\right]-x\tilde{v}_x(x,z),
\end{align*}
where $Y=(Y_t)_{t\geq 0}$ is given by \eqref{eq:Yt} with $Y_0= -\tilde{v}_x(x,z)$. Note that $F(z)>0$ for $z\in{\cal O}_Z$, which implies $\tilde{v}(x,z)<0$ for all $(x,z)\in\R_+\times {\cal O}_Z$. Then, we can take the function $L(x,z)=-\tilde{v}((\mathrm{v}-z)^+,z)+ ( z-\mathrm{v})^+$ and get the inequality \eqref{eq:dAstarpositive}, which completes the proof of the lemma.
\end{proof}

\begin{proof}[Proof of Proposition~\ref{prop:sol-HJB}]
Using the probabilistic representation \eqref{eq:dual-func}, we consider the candidate solution admitting the form $\hat{v}(y,z)=f(y)+z g(y)$ for Eq.~\eqref{eq:HJB-dual-hatv}. In particular, the functions $y\to f(r)$ and $y\to g(y)$ satisfy the following equations, respectively:
\begin{align}
 -\rho f(y)+\rho yf'(y)+\alpha y^2f''(y)+\Phi(y)&=0,\label{eq:f}\\[1em]
 (\mu_Z -\rho) g(y)+(\rho-\eta) yg'(y)+\alpha y^2g''(y)-(\mu_Z-\eta) y&=0.\label{eq:g}
\end{align}
By solving Eq.s \eqref{eq:f} and \eqref{eq:g}, we obtain, for $p\in(-\infty,1)$,
\begin{align*}
l(y)&=\begin{cases}
\displaystyle \frac{(1-p)^3}{p(\rho(1-p)-\alpha p)}y^{-\frac{p}{1-p}}+C_1  y+C_2 y^{-\frac{\rho}{\alpha}}, &p\neq 0,\\[1.8em]
\displaystyle \frac{1}{\rho} r-\frac{\ln\beta+2}{\rho}+\frac{\alpha}{\rho^2}+C_1  y+C_2 y^{-\frac{\rho}{\alpha}}, &p=0,
\end{cases}\\
g(y)&=y+C_3 y^{\kappa}+C_4 y^{-\hat{\kappa} },
\end{align*}
where $C_i$ with $i=1,\ldots,4$ are unknown real constants which will be determined later. Above, the the constant $\hat{\kappa}$ is given by
\begin{align*}
    \hat{\kappa}:=\frac{-(\rho-\eta-\alpha)-\sqrt{(\rho-\eta-\alpha)^2+4\alpha(\rho-\mu_Z)}}{2\alpha}<0.
\end{align*}
Using the probability representation \eqref{eq:dual-func}, we look for such functions $f(y)$ and $g(y)$ with $C_2=C_4=0$ and such that the Neumann boundary conditions $f'(\beta)=0$ and $g'(\beta)=0$ holds. This implies that
\begin{align*}
C_1=\frac{(1-p)^2}{\rho(1-p)-\alpha p}\beta^{-\frac{1}{1-p}},\quad
C_3=- \frac{\beta^{1-\kappa}}{\kappa}.
\end{align*}
With the above specified constants $C_i$ with $i=1,\ldots,4$, we can easily verify that $\hat{v}(y,z)=f(y)+zg(y)$ given by \eqref{sol:hat-v} satisfies Eq. \eqref{eq:HJB-dual-hatv}. Thus, we complete the proof of the proposition. 
\end{proof}

\begin{proof}[Proof of Corollary~\ref{coro:inject-captial-CRRA-GBM}]
For any $(\mathrm{v},z)\in\R_+^2$, we have from \eqref{eq:w}, \eqref{f1} and Lemma \ref{lem:equivalence} that
\begin{align}\label{eq:Uc-1}
 \beta\Ex\left[\int_0^{\infty} e^{-\rho t}dA^*_t \right]&= \Ex\left[\int_0^{\infty} e^{-\rho t} U(c^*_t)dt\right] -{\rm w}(\mathrm{v},z)-\beta ( z-\mathrm{v})^+\nonumber\\
&=  \Ex\left[\int_0^{\infty} e^{-\rho t} U(I(Y_t))dt\right] -v(x, z),
\end{align}
where $Y_0=f(x,z)$ and $x=(\mathrm{v}-z)^+$. Using a similar calculation in the proof of Proposition~\ref{prop:sol-HJB}, we have
\begin{align}\label{eq:Uc-2}
&\Ex\left[\int_0^{\infty} e^{-\rho t} U(I(Y_t))dt\right]\nonumber\\
&\quad=\begin{cases}
\displaystyle \frac{(1-p)^2}{p(\rho(1-p)-\alpha p)}f(x,z)^{-\frac{p}{1-p}}+\frac{1-p}{\rho(1-p)-\alpha p}\beta^{-\frac{1}{1-p}}f(x,z),~p<1,~~p\neq 0,\\[1.2em]
\displaystyle -\frac{1}{\rho} \ln f(x,z)-\frac{1}{\rho}+\frac{\alpha}{\rho^2}+\frac{1}{\rho \beta} f(x,z),~~p=0.
\end{cases}
\end{align}
From \eqref{eq:Uc-1}, \eqref{eq:Uc-2} and Proposition~\ref{prop:sol-HJB}, it follows that \eqref{eq:injection-CRRA} holds, which verifies the item (i).

Next, we deal with the item (ii). In fact, the value function $\tilde{v}$ defined by \eqref{eq:tilde-v} satisfies the following HJB equation that, on $\R_+\times(0,\infty)$,
\begin{align}\label{HJB-tilde}
\begin{cases}
\displaystyle\sup_{\theta\in\R^d}\left[\theta^{\top}\mu \tilde{v}_x+\frac{1}{2}\theta^{\top}\sigma\sigma^{\top}\theta \tilde{v}_{xx}+ \theta^{\top} \sigma \gamma\sigma_Z z (\tilde{v}_{x z}-\tilde{v}_{xx})  \right]-\sigma_Z^2 z^2 \tilde{v}_{xz}\\[0.6em]
\displaystyle\qquad\qquad\qquad\qquad\qquad\qquad+\frac{1}{2}\sigma_Z^2 z^2 (\tilde{v}_{xx}+\tilde{v}_{zz})+\mu_Z z (\tilde{v}_z-\tilde{v}_x) =\rho \tilde{v},\\[0.4em]
\displaystyle \tilde{v}_x(0,z)=\beta,~~\forall z>0.
\end{cases}
\end{align}
Applying the first-order condition, we arrive at, on $\R_+\times(0,\infty)$,
\begin{align}\label{HJB-v-tilde}
\begin{cases}
\displaystyle -\alpha \frac{\tilde{v}_x^2}{ \tilde{v}_{xx}}+\frac{1}{2}\sigma_Z^2 z^2\left(\tilde{v}_{z z}- \frac{ \tilde{v}_{x z}^2}{ \tilde{v}_{x x}}\right)-\eta z \frac{ \tilde{v}_x  \tilde{v}_{x z}}{ \tilde{v}_{x x}}+(\eta-\mu_Z)z\tilde{v}_x+\mu_Z z \tilde{v}_z =\rho v,\\[1.3em]
\displaystyle v_x(0,z)=\beta,\quad \forall z>0.
\end{cases}
\end{align}
Here, the coefficients
$\alpha=\frac{1}{2} \mu^{\top}(\sigma \sigma^{\top})^{-1}\mu$ and $\eta=\sigma_Z\gamma^{\top}\sigma^{-1}\mu$. It can be directly verified that
\begin{align}\label{eq:sol-v}
\tilde{v}(x,z)=z\frac{1-\kappa}{\kappa}\left(1+\frac{x}{z}\right)^\frac{\kappa}{\kappa-1}
\end{align}
is a classical solution to HJB equation \eqref{HJB-v-tilde} with $\kappa$ being the constant defined by $\kappa=(-(\rho-\eta-\alpha)+\sqrt{(\rho-\eta-\alpha)^2+4\alpha(\rho-\mu_Z)})/(2\alpha)$.
Thus, we complete the proof of the corollary.
\end{proof}

\noindent{\it Proof of Lemma \ref{lem:asymptotic-control}.}\quad  It follows from the duality relationship that $x=-\hat{v}_y(y,z)$ and
\begin{align}\label{eq:dual-2}
\theta^*(x,z)&= -\frac{\mu}{\sigma^2}\frac{v_x}{v_{xx}}(x,z)-\frac{\sigma_Z}{\sigma}\left(\frac{zv_{xz}}{v_{xx}}(x,z)-z\right)\nonumber\\
&=\frac{\mu}{\sigma^2}f(x,z)\hat{v}_{yy}(f(x,z),z)-\frac{\sigma_Z}{\sigma}(z\hat{v}_{yz}(f(x,z),z)-z).
\end{align}
Hence, we have from \eqref{eq:dual-2} that
\begin{align}
\lim_{x\to +\infty}\frac{\theta^*(x,z)}{x}&=\lim_{x\to +\infty}\frac{\mu f(x,z)\hat{v}_{yy}(f(x,z),z)-\sigma_Z\sigma(z\hat{v}_{yz}(f(x,z),z)-z)}{-\sigma^2\hat{v}_y(f(x,z),z)}\nonumber\\
&=\lim_{y\to 0}\frac{\mu y\hat{v}_{yy}(y,z)-\sigma_Z\sigma z\hat{v}_{yz}(y,z)}{-\sigma^2\hat{v}_y(y,z)}.
\end{align}
By using \eqref{sol:hat-v}, for $p<1$ with $p\neq 0$, it holds that
\begin{align}
\lim_{y\to 0}\frac{y\hat{v}_{yy}(y,z)}{-v_y(y,z)}&=\lim_{x\to +\infty}\frac{\frac{1-p}{\rho(1-p)-\alpha p}y^{-\frac{1}{1-p}}+(1-\kappa) \beta^{-(\kappa-1)}zy^{\kappa-1}}{\frac{(1-p)^2}{\rho(1-p)-\alpha p}(y^{-\frac{1}{1-p}}-\beta^{-\frac{1}{1-p}})+z\left(\beta^{-(\kappa-1)}y^{\kappa-1}-1\right)}\nonumber\\
&=\lim_{y\to 0}\frac{\frac{1-p}{\rho(1-p)-\alpha p}+(1-\kappa) \beta^{-(\kappa-1)}zy^{\kappa-1+\frac{1}{1-p}}}{\frac{(1-p)^2}{\rho(1-p)-\alpha p}(1-\beta^{-\frac{1}{1-p}}y^{\frac{1}{1-p}})+z\left(\beta^{-(\kappa-1)}y^{\kappa-1+\frac{1}{1-p}}-y^{\frac{1}{1-p}}\right)}\nonumber\\
&=\frac{1}{1-p}\lim_{y\to 0}\frac{1+C^*(1-\kappa)(1-p) \beta^{-(\kappa-1)}zy^{\kappa+\frac{p}{1-p}}}{1+zC^*\beta^{-(\kappa-1)}y^{\kappa+\frac{p}{1-p}}},\label{eq:theta-x-1}\\
\lim_{y\to 0}\frac{z\hat{v}_{yz}(y,z)}{-v_y(y,z)}&=\lim_{y\to 0}\frac{(\beta^{-(\kappa-1)}y^{\kappa-1}-1)z}{\frac{(1-p)^2}{\rho(1-p)-\alpha p}(y^{-\frac{1}{1-p}}-\beta^{-\frac{1}{1-p}})+z\left(\beta^{-(\kappa-1)}y^{\kappa-1}-1\right)}\nonumber\\
&=\lim_{y\to 0}\frac{(\beta^{-(\kappa-1)}y^{\kappa+\frac{p}{1-p}}-y^{\frac{1}{1-p}})z}{\frac{(1-p)^2}{\rho(1-p)-\alpha p}(1-\beta^{-\frac{1}{1-p}}y^{\frac{1}{1-p}})+z\left(\beta^{-(\kappa-1)}y^{\kappa+\frac{p}{1-p}}-y^{\frac{1}{1-p}}\right)}\nonumber\\
&=\lim_{y\to 0}\frac{C^*\beta^{-(\kappa-1)}y^{\kappa+\frac{p}{1-p}}z}{1+C^*\beta^{-(\kappa-1)}y^{\kappa+\frac{p}{1-p}}z}.\label{eq:theta-x-2}
\end{align}
By discussing the three cases of $p>p_1$, $p=p_1$ and $p<p_1$, respectively, we can deduce the desired result \eqref{eq:limtheta} following from \eqref{eq:theta-x-1} and \eqref{eq:theta-x-2}. Furthermore, we can easily verify that the limit \eqref{eq:limtheta} also holds for the case with $p=0$.

Next, it follows from the relationship $x=-\hat{u}_y(y,z)$ and \eqref{sol:hat-v} that, for $p<1$ with $p\neq 0$,
\begin{align}\label{eq:c-x}
\lim_{x\to +\infty}\frac{c^*(x,z)}{x}&=\lim_{x\to +\infty}\frac{f(x,z)^{\frac{1}{1-p}}}{-\hat{v}_y(f(x,z),z)}=\lim_{y\to 0}\frac{y^{\frac{1}{1-p}}}{-\hat{v}_y(y,z)}\nonumber\\
&=\lim_{y\to 0}\frac{y^{-\frac{1}{1-p}}}{\frac{(1-p)^2}{\rho(1-p)-\alpha p}(y^{-\frac{1}{1-p}}-\beta^{-\frac{1}{1-p}})+z\left(\beta^{-(\kappa-1)}y^{\kappa-1}-1\right)}\nonumber\\
&=\lim_{y\to 0}\frac{C^*}{(1-\beta^{-\frac{1}{1-p}}y^{\frac{1}{1-p}})+zC^*\left(\beta^{-(\kappa-1)}y^{\kappa+\frac{p}{1-p}}-y^{\frac{1}{1-p}}\right)}\nonumber\\
&=\lim_{y\to 0}\frac{C^*}{1+C^*\beta^{-(\kappa-1)}y^{\kappa+\frac{p}{1-p}}z}.
\end{align}
Thanks to \eqref{eq:c-x}, we can obtain the desired result \eqref{eq:limc} via a direct calculation. Similar proof can be conducted in the case $p=0$. \hfill$\Box$

\begin{proof}[Proof of Proposition \ref{lem:prop}]
We first prove item (i). For $(x,z)\in\R_+^2$, recall that $f(x,z)\in(0,\beta]$ satisfies the following equation:
\begin{align}\label{eq:dual-u3}
\hat{v}_y(f(x,z),z)=-x.
\end{align}
Taking the derivative with respect to $x$ on both sides of \eqref{eq:dual-u3}, we deduce that
\begin{align}\label{eq:dual-u4}
f_x(x,z)=-\frac{1}{\hat{v}_{yy}(f(x,z),z)}.
\end{align}
Then, it follows from \eqref{sol:v} and \eqref{eq:dual-u4} that
\begin{align}\label{eq:derivate-c-x}
\frac{\partial c^*(x,z)}{\partial x}=\frac{\partial f(x,z)^{\frac{1}{p-1}}}{\partial x}=\frac{1}{p-1}f(x,z)^{\frac{1}{p-1}-1}f_x(x,z)=\frac{f(x,z)^{\frac{1}{p-1}-1}}{(1-p)\hat{v}_{yy}(f(x,z),z)}>0.
\end{align}
This implies that $x\to c^*(x,z)$ is increasing.

Next, we deal with item (ii). Taking the derivative with respect to $x$ on both sides of \eqref{eq:dual-u4} again, we obtain
\begin{align}\label{eq:dual-u5}
f_{xx}(x,z)=\frac{1}{\hat{v}^2_{yy}(f(x,z),z)}\hat{v}_{yyy}(f(x,z),z)f_x(x,z)=-\frac{\hat{v}_{yyy}(f(x,z),z)}{\hat{v}^3_{yy}(f(x,z),z)}.
\end{align}
It follows from \eqref{sol:v}, \eqref{eq:derivate-c-x} and \eqref{eq:dual-u5} that
\begin{align}\label{eq:derivate-c-xx}
\frac{\partial^2 c^*(x,z)}{\partial x^2}&=\frac{1}{p-1}f(x,z)^{\frac{2-p}{p-1}}f_{xx}(x,z)+\frac{2-p}{(p-1)^2}f(x,z)^{\frac{3-2p}{p-1}}f_{x}(x,z)^2\nonumber\\
&=-\frac{1}{p-1}f(x,z)^{\frac{2-p}{p-1}}\frac{\hat{v}_{yyy}(f(x,z),z)}{\hat{v}^3_{yy}(f(x,z),z)}+\frac{2-p}{(p-1)^2}f(x,z)^{\frac{3-2p}{p-1}}\frac{1}{\hat{v}^2_{yy}(f(x,z),z)}\nonumber\\
&=\frac{1}{1-p}\frac{f(x,z)^{\frac{3-2p}{p-1}}}{\hat{v}^3_{yy}(f(x,z),z)}\left[f(x,z)\hat{v}_{yyy}(f(x,z),z)+\frac{2-p}{1-p}\hat{v}_{yy}(f(x,z),z)\right].
\end{align}
We then obtain from \eqref{sol:hat-v} and \eqref{eq:derivate-c-xx} that
\begin{align}\label{eq:derivate-c-xx-1}
\frac{\partial^2 c^*(x,z)}{\partial x^2}&=\frac{1-\kappa}{1-p}\frac{f(x,z)^{\frac{3-2p}{p-1}+\kappa-1}}{\hat{v}^3_{yy}(f(x,z),z)}\beta^{-(\kappa-1)}z\left(\kappa+\frac{p}{1-p}\right),
\end{align}
which implies the desired item (ii).

 Next, we proceed to show item (iii). By applying Corollary \ref{coro:optimal-control}, it holds that
\begin{align*}
\theta^*(x,z)&= -\frac{\mu}{\sigma^2}\frac{v_x}{v_{xx}}(x,z)-\frac{\sigma_Z}{\sigma}\left(\frac{zv_{xz}}{v_{xx}}(x,z)-z\right)\nonumber\\
&=\frac{\mu}{\sigma^2}f(x,z)\hat{v}_{yy}(f(x,z),z)-\frac{\sigma_Z}{\sigma}(z\hat{v}_{yz}(f(x,z),z)-z).
\end{align*}
This yields that
\begin{align}\label{eq:theta-de1}
&\frac{\partial \theta^*(x,z)}{\partial x}=f_x(x,z)\left(\frac{\mu}{\sigma^2}\left(\hat{v}_{yy}(f(x,z),z)+f(x,z)\hat{v}_{yyy}(f(x,z),z)\right)
-\frac{\sigma_Z}{\sigma}z\hat{v}_{yyz}(f(x,z),z)\right).
\end{align}
In lieu of \eqref{sol:hat-v}, we can see that
\begin{align}
(\hat{v}_{yy}+y\hat{v}_{yyy})(y,z)&=-\frac{1}{\rho(1-p)-\alpha p}y^{\frac{2-p}{p-1}}-z(1-\kappa)^2\beta^{-(\kappa-1)}y^{\kappa-2},\label{eq:vyy-yyy}\\
\hat{v}_{yyz}(y,z)&=(1-\kappa)\beta^{-(\kappa-1)}y^{\kappa-2}.\label{eq:vyyz}
\end{align}
Then, from \eqref{eq:theta-de1}, \eqref{eq:vyy-yyy} and \eqref{eq:vyyz}, we deduce that
\begin{align}\label{eq:theta-de-x}
\frac{\partial \theta^*(x,z)}{\partial x}&=\frac{\mu}{\sigma^2}\frac{f(x,z)^{\kappa-2}}{\hat{v}_{yy}(f(x,z),z)}\nonumber\\
&\quad\times\left[\frac{1}{\rho(1-p)-\alpha p}f(x,z)^{-(\frac{p}{1-p}+\kappa)}+(1-\kappa)\beta^{-(\kappa-1)}z\left(1-\kappa+\frac{\sigma_Z\sigma}{\mu}\right)\right].
\end{align}
Note that $\frac{\partial \theta^*(x,z)}{\partial x}>0$ for all $x\in\R_+$. Then, we obtain that $x\to \theta^*(x,z)$ is increasing.

Finally, it remains to prove item (iv). In fact, it follows from \eqref{eq:theta-de-x} that
\begin{align}\label{eq:theta-de2}
&\frac{\partial^2 \theta^*(x,z)}{\partial x^2}=-\frac{\mu}{\sigma^2}\frac{1}{\hat{v}^2_{yy}(f(x,z),z)}\nonumber\\
&~\times\left[\frac{1}{\rho(1-p)-\alpha p}\frac{2-p}{p-1}f(x,z)^{\frac{3-2p}{p-1}}+f(x,z)^{\kappa-3}(1-\kappa)(\kappa-2)\beta^{-(\kappa-1)}
z\left(1-\kappa+\frac{\sigma_Z\sigma}{\mu}\right)\right]\nonumber\\
&~+\frac{\mu}{\sigma^2}\frac{\hat{v}_{yyy}(f(x,z),z)}{\hat{v}^3_{yy}(f(x,z),z)}\left[\frac{1}{\rho(1-p)-\alpha p}f(x,z)^{\frac{2-p}{p-1}}+f(x,z)^{\kappa-2}(1-\kappa)\beta^{-(\kappa-1)}z\left(1-\kappa+\frac{\sigma_Z\sigma}{\mu}\right)\right]\nonumber\\
&~=\frac{\mu}{\sigma^2}\frac{1}{\hat{v}^3_{yy}(f(x,z),z)}\left[\frac{1}{\rho(1-p)-\alpha p}f(x,z)^{\frac{3-2p}{p-1}}\left(\frac{2-p}{1-p}\hat{v}_{yy}(f(x,z),z)+f(x,z)\hat{v}_{yyy}(f(x,z),z)\right)\right.\nonumber\\
&~+\left.f(x,z)^{\kappa-3}(1-\kappa)\beta^{-(\kappa-1)}z\left(1-\kappa+\frac{\sigma_Z\sigma}{\mu}\right)
\left((2-\kappa)\hat{v}_{yy}(f(x,z),z)+f(x,z)\hat{v}_{yyy}(f(x,z),z)\right)\right]\nonumber\\
&~=\frac{\mu}{\sigma^2}\frac{\beta^{-(\kappa-1)}(1-\kappa)z}{\hat{v}_{yy}^3(f(x,z),z)} \frac{1-p}{\rho(1-p)-\alpha p}f(x,z)^{-\frac{5-4p}{p-1}+\kappa}\left(\frac{p}{1-p}+\kappa\right)\left(\frac{p}{1-p}+\kappa-\frac{\sigma\sigma_Z}{\mu}\right).
\end{align}
It also holds that
\begin{align}\label{p-p1p2}
\left(\frac{p}{1-p}+\kappa\right)\left(\frac{p}{1-p}+\kappa-\frac{\sigma\sigma_Z}{\mu}\right)
\begin{cases}
\displaystyle >0, &\text{if}~p\in(\infty,p_1)\cup(p_2,1);\\
\displaystyle =0, &\text{if}~p=p_1~\text{or}~p=p_2;\\
\displaystyle <0, &\text{if}~p\in(p_1,p_2).
\end{cases}
\end{align}
The desired result (iv) then follows from \eqref{eq:theta-de2} and \eqref{p-p1p2}. 
\end{proof}

\noindent{\bf Acknowledgement.}\quad The authors are grateful to two anonymous referees and the Associate Editor for their helpful comments and suggestions. L. Bo and Y. Huang are supported by National Natural Science of Foundation of China (grant 12471451), Natural Science Basic Research Program of Shaanxi (grant 2023-JC-JQ-05) and the Shaanxi Fundamental Science Research Project for Mathematics and Physics (grant 23JSZ010). X. Yu is supported by the Hong Kong RGC General Research Fund (GRF) under grants 15304122 and 15306523 and by the Research Centre for Quantitative Finance at the Hong Kong Polytechnic University under grant no. P0042708.

\ \\
\noindent{\bf Declarations:}\quad The authors have no relevant financial or non-financial conflict of interests to declare.              
\ \\ \\
\noindent{\bf Data availability statement:}\quad Data sharing not applicable to this article as no datasets were generated or analyzed during the current study.


\begin{thebibliography}{}
{\small

\bibitem[{Achury et al.(2012)}]{Achury12} Achury C, Hubar S, Koulovatianos C (2012) Saving rates and portfolio choice with subsistence consumption. {\it Rev. Econ. Dyn.} {15}(1): 108-126.

\bibitem[{Alvarez-Pelaez and D{\'\i}az (2005)}]{Alvarez-Pelaez05} Alvarez-Pelaez MJ, D{\'\i}az A (2005) Minimum consumption and transitional dynamics in wealth distribution. {\it J. Monetary Econ.} {52}(3): 633-667.

\bibitem[{Angoshtari et al.(2019)}]{ABY} Angoshtari B, Bayraktar E, Young VR (2019) Optimal dividend distribution under drawdown and ratcheting constraints on dividend rates. {\it SIAM J. Financial Math.} {10}(2): 547-577.

\bibitem[Barles et al.(1994)]{BDR1994} Barles G, Daher C, Romano M (1994) Optimal control on the $L^{\infty}$ norm of a diffusion process. {\it SIAM J. Contr. Optim.} {32}(3): 612-634.

\bibitem[{Barron and Ishii(1989)}]{BaIshii89} Barron EN, Ishii H (1989) The Bellman equation for minimizing the maximum cost. {\it Nonlinear Anal. TMA} {13}(9): 1067-1090.

\bibitem[{Behr et al.(2013)}]{Behr13} Behr P, Guettler A, Miebs F (2013) On portfolio optimization: Imposing the right constraints. {\it J. Bank. Financ.} {37}(4): 1232-1242.

\bibitem[{Best and Hlouskova(2000)}]{Best2000}Best MJ, Hlouskova J (2000) The efficient frontier for bounded assets. {\it Math. Method Oper. Res.} 52: 195-212.



\bibitem[Bo et al.(2021)]{BoLiaoYu21}  Bo L, Liao H, Yu X (2021) Optimal tracking portfolio with a ratcheting capital benchmark. {\it SIAM J. Contr. Optim.} {59}(3): 2346-2380.

\bibitem[Bo et al.(2024)]{BHY24} Bo L, Huang Y, Yu X (2024) Stochastic control problems with state-reflections arising from relaxed benchmark tracking. {\it Math. Oper. Res.} available at \url{https://doi.org/10.1287/moor.2023.0265}

\bibitem[{Bokanowski et al.(2015)}]{BPZ15} Bokanowski O, Picarelli A, Zidani H (2015) Dynamic programming and error estimates for stochastic control problems with maximum cost. {\it Appl. Math. Optim.} 71: 125-163.

\bibitem[{Borell(2007)}]{Broell07} Borell C (2007) Monotonicity properties of optimal investment strategies for log-Brownian asset prices.  {\it Math. Finance} {17}(1): 143-153.

\bibitem[{Boyle and Tian(2007)}]{BoyTian07} Boyle P, Tian W (2007) Portfolio management with constraints.  {\it Math. Finance} {17}(3): 319-343.

\bibitem[Brigo et al.(2009)]{Brigo2009} Brigo D, Dalessandro A, Neugebauer M, Triki F (2009) A stochastic processes toolkit for risk management: Geometric Brownian motion, jumps, GARCH and variance gamma models. {\it J. Risk Manag.  Finan. Institutions} {2}(4): 365-393.


\bibitem[Browne(1999a)]{Browne9} Browne S (1999a) Reaching goals by a deadline: Digital options and continuous-time active portfolio management. {\it Adv. Appl. Probab.} {31}: 551-577.

\bibitem[{Browne(1999b)}]{Browne99} Browne S (1999b) Beating a moving target: Optimal portfolio strategies for outperforming a stochastic benchmark. {\it Finan. Stoch.} {3}: 275-294.


\bibitem[{Browne(2000)}]{Browne00} Browne S (2000) Risk-constrained dynamic active portfolio management {\it Manag. Sci.} {46}(9): 1188-1199.



\bibitem[{Carroll and Kimball(1996)}]{CK1996} Carroll CD, Kimball MS (1996) On the concavity of the consumption function. {\it Econometrica} {64}(4): 981-992.

\bibitem[Chacko and Viceira(2005)]{Chacko2005} Chacko G, Viceira LM (2005) Dynamic consumption and portfolio choice with stochastic volatility in incomplete markets. {\it Rev.  Finan. Stud.} {18}(4): 1369-1402.

\bibitem[{Chatterjee and Ravikumar(1999)}]{Chatterjee1999}Chatterjee S and Ravikumar B (1999) Minimum consumption requirements: Theoretical and quantitative implications for growth and distribution. {\it Macroecon. Dyn.} {3}(4): 482-505.

\bibitem[{Chen and Vellekoop(2017)}]{ChenV} Chen A, Vellekoop M (2017) Optimal investment and consumption when allowing terminal debt. {\it Euro. J. Oper. Res.} 258: 385-397.


\bibitem[{Chow et al.(2020)}]{CYZ20} Chow Y, Yu X, Zhou C (2020) On dynamic programming principle for stochastic control under expectation constraints. {\it J. Optim. Theor. Appl.} {185}(3): 803-818.

\bibitem[{Cox and Huang(1989)}]{Cox1989} Cox JC, Huang CF (1989) Optimal consumption and portfolio policies when asset prices follow a diffusion process. {\it J. Econ. Theor.} {49}(1): 33-83.

\bibitem[{Deng et al.(2022)}]{DLPY22} Deng S, Li X, Pham H, Yu X (2022) Optimal consumption with reference to past spending maximum. {\it Finan. Stoch.} {26}: 217-266.

\bibitem[{Di Giacinto et al.(2011)}]{Giacinto11} Di Giacinto M, Federico S, Gozzi F (2011) Pension funds with a minimum guarantee: a stochastic control approach. {\it Finan. Stoch.} {15}: 297-342.


\bibitem[{Eisenberg and Schmidli(2009)}]{Eisenberg09} Eisenberg J, Schmidli H (2009) Optimal control of capital injections by reinsurance in a diffusion approximation. {\it Bl\"{a}tter der DGVFM} {30}(1): 1-13.

\bibitem[{Eisenberg and Schmidli(2011)}]{Eisenberg11}  Eisenberg J, Schmidli H (2011) Optimal control of capital injections by reinsurance with a constant rate of interest. {\it J.  Appl. Probab.} {48}(3): 733-748.


\bibitem[{El Karoui et al.(2005)}]{Karoui05} El Karoui N, Jeanblanc M, Lacoste V (2005) Optimal portfolio management with American capital guarantee. {\it J. Econ. Dyn. Contr.} {29}: 449-468.

\bibitem[{El Karoui and Meziou(2006)}]{KM06} El Karoui N, Meziou A (2006) Constrained optimization with respect to stochastic dominance: application to portfolio insurance. {\it Math. Finance} {16}(1): 103-117.

\bibitem[{Farkas et al.(2012)}]{Farkas21} Farkas W, Mathys L,  Vasiljevi{\'c}, N (2021) Intra‐Horizon expected shortfall and risk structure in models with jumps. {\it Math. Finance} {31}(2): 772-823.

\bibitem[{Ferrari and Meziou(2019)}]{FS19} Ferrari G, Schuhmann P (2019) An optimal dividend problem with capital injections over a finite horizon. {\it SIAM J. Contr. Optim.} {57}(4): 2686-2719.


\bibitem[{Gaivoronski et al.(2005)}]{Gaivoronski05} Gaivoronski A, Krylov S, Wijst N (2005) Optimal portfolio selection and dynamic benchmark tracking. {\it Euro. J. Oper. Res.} {163}: 115-131.

\bibitem[{Guasoni et al.(2011)}]{GHW11} Guasoni P, Huberman G, Wang ZY (2011) Performance maximization of actively managed funds. {\it J. Finan. Econ.} {101}: 574-595.

\bibitem[{Jensen and Miller(2008)}]{Jensen08}Jensen RT, Miller NH (2008) Giffen behavior and subsistence consumption. {\it Amer. Econ. Rev.} 98(4): 1553-1577.

\bibitem[{Karatzas et al.(1987)}]{Karatzas1987} Karatzas I, Lehoczky JP, Shreve SE (1987) Optimal portfolio and consumption decisions for a ``small investor'' on a finite horizon. {\it SIAM J. Contr. Optim.} {25}(6): 1557-1586.

\bibitem[{Kim and Shin(2018)}]{Kim18} Kim JY, Shin YH (2018) Optimal consumption and portfolio selection with negative wealth constraints, subsistence consumption constraints, and CARA utility. {\it J. Korean Stat. Soc.} 47: 509-519.

\bibitem[{Kr\"{o}ner et al.(2018)}]{Kroner18} Kr\"{o}ner A, Picarelli A, Zidani H (2018) Infinite horizon stochastic optimal control problems with running maximum cost. {\it SIAM J. Contr. Optim.} {56}(5): 3296-3319.


\bibitem[{Li et al.(2023)}]{LYZ} Li X, Yu X, Zhang Q (2023) Optimal consumption and life insurance under shortfall aversion and a drawdown constraint. {\it Insur. Math. Econ.} {108}: 25-45.

\bibitem[{Liu and Li(2023)}]{Liu23} Liu H, Li L (2023) On the concavity of consumption function under habit formation. {\it J. Math. Econ.} {106}: 102829.


\bibitem[{Lokka and Zervos(2008)}]{Lokka08} Lokka A, Zervos M (2008) Optimal dividend and issuance of equity policies in the presence of proportional costs. {\it Insur. Math. Econ.} {42}(3): 954-961.


\bibitem[Merton(1969)]{Merton69} Merton RC (1969) Lifetime portfolio selection under uncertainty: the continuous time case. {\it Rev. Econ. Stats.} {51}(3): 247-257.

\bibitem[Merton(1971)]{Merton1971} Merton RC (1971) Optimum consumption and portfolio rules in a continuous-time model. {\it J. Econ. Theor.} {3}(4): 373-413.

\bibitem[{Ni et al.(2022)}]{NLFC} Ni C, Li Y, Forsyth P, Carroll R (2022) Optimal asset allocation for outperforming a stochastic benchmark target. {\it Quant. Finance} {22}(9): 1595-1626.

\bibitem[{Pham(2002)}]{Pham02} Pham H (2002) Minimizing shortfall risk and applications to finance and insurance problems. {\it Ann. Appl. Probab.} {12}(1): 143-172.

\bibitem[{Sekine(2012)}]{Sekine12} Sekine J (2012) Long-term optimal portfolios with floor. {\it Finan. Stoch.} {16}: 369-401.

\bibitem[{Sethi et al.(1992)}]{Sethi1992} Sethi SP, Taksar MI, Presman EL (1992) Explicit solution of a general consumption/portfolio problem with subsistence consumption and bankruptcy. {\it J. Econ. Dyn. Contr.} 16(3-4): 747-768.

\bibitem[{Shin and Lim (2011)}]{Shin2011}Shin YH, Lim BH (2011) Comparison of optimal portfolios with and without subsistence consumption constraints. {\it Nonlinear Anal. TMA} {74}(1): 50-58.

\bibitem[{Shirakawa(1994)}]{Shirakawa1994} Shirakawa H (1994) Optimal consumption and portfolio selection with incomplete markets and upper and lower bound constraints. {\it Math. Finance} {4}(1): 1-24.

\bibitem[{Strub and Baumann(2018)}]{Strub18} Strub O, Baumann P (2018) Optimal construction and rebalancing of index-tracking portfolios. {\it Euro. J. Oper. Res.} {264}: 370-387.

\bibitem[{Wang and Wang(2021)}]{Wang2021} Wang L,  Wang S (2021) Unusual investor behavior under tacit and endogenous market signals. {\it Inter. Rev.  Econ.  Finan.} {73}: 76-97.

\bibitem[{Weerasinghe and Zhu(2016)}]{Weerasinghe16} Weerasinghe A, Zhu C (2016) Optimal inventory control with path-dependent cost criteria. {\it Stoch. Process. Appl.} {126}: 1585-1621.

\bibitem[{Xia(2011)}]{Xia11} Xia J (2011) Risk aversion and portfolio selection in a continuous-time model. {\it SIAM J. Contr. Optim.} {49}(5): 1916-1937.

\bibitem[{Yao et al.(2006)}]{YaoZZ06} Yao D, Zhang S, Zhou XY (2006) Tracking a financial benchmark using a few assets. {\it Oper. Res.} {54}(2): 232-246.

\bibitem[{Zimmerman and Carter(2003)}]{Zimmerman03} Zimmerman FJ, Carter MR (2003) Asset smoothing, consumption smoothing and the reproduction of inequality under risk and subsistence constraints. {\it J. Dev. Econ.} {71}(2): 233-260.
}
\end{thebibliography}
\end{document}